\pgfplotsset{compat=newest}
\definecolor{teal}{rgb}{0.0, 0.5, 0.5}
\newcounter{mnotecount}[section]
\newcommand{\rmnote}[1]{}
\DeclareFontFamily{U}{mathb}{\hyphenchar\font45}
\DeclareFontShape{U}{mathb}{m}{n}{
      <5> <6> <7> <8> <9> <10> gen * mathb
      <10.95> mathb10 <12> <14.4> <17.28> <20.74> <24.88> mathb12
      }{}
\DeclareSymbolFont{mathb}{U}{mathb}{m}{n}
\let\dot\relax
\DeclareMathAccent{\dot}{0}{mathb}{"39}
\let\ddot\relax
\DeclareMathAccent{\ddot}{0}{mathb}{"3A}
\let\dddot\relax
\DeclareMathAccent{\dddot}{0}{mathb}{"3B}
\let\ddddot\relax
\DeclareMathAccent{\ddddot}{0}{mathb}{"3C}
\theoremstyle{plain}
\newtheorem*{theorem*}{Theorem}
\newtheorem{theorem}{Theorem}[section]
\newtheorem*{lemma*}{Lemma}
\newtheorem{lemma}[theorem]{Lemma}
\newtheorem*{assumption*}{Assumption}
\newtheorem*{proposition*}{Proposition}
\newtheorem{proposition}[theorem]{Proposition}
\newtheorem*{corollary*}{Corollary}
\newtheorem{corollary}[theorem]{Corollary}
\newtheorem*{claim*}{Claim}
\newtheorem*{conjecture*}{Conjecture}
\newtheorem*{question*}{Question}
\newtheorem*{result*}{Result}
\theoremstyle{definition}
\newtheorem*{definition*}{Definition}
\newtheorem*{example*}{Example}
\newtheorem{example}[theorem]{Example}
\newtheorem*{algorithm*}{Algorithm}
\newtheorem*{remark*}{Remark}
\newtheorem*{remarks*}{Remarks}
\newtheorem{remark}[theorem]{Remark}
\newtheorem*{convention*}{Convention}
\theoremstyle{plain}
\newtheorem{maintheorem}{Theorem}
\newtheorem{maincorollary}[maintheorem]{Corollary}
\numberwithin{equation}{section}
\def\al{\alpha}
\def\be{\beta}
\def\ga{\gamma}
\def\de{\delta}
\def\ep{\epsilon}
\def\th{\theta}
\def\si{\sigma}
\def\ta{\tau}
\def\vh{\varphi}
\def\ch{\chi}
\def\ps{\psi}
\def\om{\omega}
\def\Ga{\Gamma}
\def\Si{\Sigma}
\def\Ps{\Psi}
\def\N{\mathbb{N}}
\def\R{\mathbb{R}}
\def\cA{\mathcal{A}}
\def\cC{\mathcal{C}}
\def\cH{\mathcal{H}}
\def\cQ{\mathcal{Q}}
\def\sH{\mathscr{H}}
\def\sS{\mathscr{S}}
\def\sU{\mathscr{U}}
\def\p{\partial}
\def\id{\on{id}}
\def\<{\langle}
\def\>{\rangle}
\renewcommand{\o}{\circ}
\def\ol{\overline}
\def\ul{\underline}
\def\wmult{\diamondsuit} 
\let\on=\operatorname
\newcommand{\sr}[1]%
{\ifmmode{}^\dagger\else${}^\dagger$\fi\ifvmode
\vbox to 0pt{\vss
 \hbox to 0pt{\hskip\hsize\hskip1em
 \vbox{\hsize3cm\raggedright\pretolerance10000
 \noindent #1\hfill}\hss}\vss}\else
 \vadjust{\vbox to0pt{\vss%
 \hbox to 0pt{\hskip\hsize\hskip1em%
 \vbox{\hsize3cm\raggedright\pretolerance10000%
 \noindent #1\hfill}\hss}\vss}}\fi%
}
\providecommand{\mapsfrom}{\kern.2em%
\setbox0=\hbox{$\leftarrow$\kern-.10em\rule[0.26mm]{0.1mm}{1.3mm}}\box0%
\kern.3em}
\title[Arc-smooth functions and cuspidality of sets]
{Arc-smooth functions and cuspidality of sets}
\author[A.~Rainer]{Armin Rainer}
\address{Fakult\"at f\"ur Mathematik, Universit\"at Wien,
Oskar-Morgenstern-Platz~1, A-1090 Wien, Austria}
\email{armin.rainer@univie.ac.at}
\begin{document}

\begin{abstract}
  A function $f$ is arc-smooth if the composite $f\o c$ with every smooth curve $c$ in its domain of definition
  is smooth. On open sets in smooth manifolds the arc-smooth functions are precisely the smooth
  functions by a classical theorem of Boman. Recently, we extended this result to certain tame closed
  sets (namely, H\"older sets and simple fat subanalytic sets).
  In this paper we link, in a precise way, the cuspidality of the (boundary of the) set
  to the loss of regularity, i.e., how many derivatives of $f\o c$ are needed in order to determine the derivatives of $f$.
  We also discuss how flatness of $f \o c$ affects flatness of $f$.
  Besides H\"older sets and subanalytic sets we treat sets that are definable in certain polynomially bounded
  o-minimal expansions of the real field.
\end{abstract}

\thanks{Supported by FWF-Project P 32905-N}
\keywords{Differentiability on closed sets, cuspidality of sets,
Boman's theorem, subanalytic sets, definable sets, o-minimality, flatness}
\subjclass[2020]{
	26B05, 	    
	26B35,  	
	26E10,  	
	32B20,  	
  58C20,    
	58C25}  	
\date{\today}

\maketitle


\section{Introduction}

A real valued function $f$ is called \emph{arc-smooth} if $f \o c \in \cC^\infty$
for each $\cC^\infty$-curve $c$ in the domain of definition of $f$.
By a theorem of Boman \cite{Boman67}, the arc-smooth functions defined on open subsets of $\R^d$ (or of smooth manifolds)
are precisely the $\cC^\infty$-functions. In our recent paper \cite{Rainer18}, arc-smooth functions defined on closed sets were studied
and far-reaching extensions of Boman's results were obtained:
\begin{enumerate}
  \item The arc-smooth functions on H\"older sets in $\R^d$ are precisely the restrictions of the $\cC^\infty$-functions on $\R^d$.
  \item The arc-smooth functions on simple fat closed subanalytic sets in $\R^d$ are precisely the restrictions of the $\cC^\infty$-functions on $\R^d$.
\end{enumerate}
We also found analytic and ultradifferentiable analogs.
A precursor for convex fat sets (even in infinite dimensions) is due to Kriegl \cite{Kriegl97}: 
the arc-smooth functions on convex sets $X$ with non-empty interior $X^\o$ are precisely the 
functions that are smooth on $X^\o$ such that the derivatives of all orders on $X^\o$ extend continuously 
to $X$. This is valid for functions between so-called convenient vector spaces, where the interior and continuity are 
understood with respect to the $c^\infty$-topology (which coincides with the trace of the Euclidean topology if $X \subseteq \R^d$; 
cf.\ \Cref{ssec:propertiesHoeldersets}).
Note that closed fat convex sets are $1$-sets (as defined below), in particular, H\"older sets. 

By \emph{H\"older sets} we mean closed sets that satisfy a uniform cusp property; see \Cref{sec:Hoeldersets}.
Recall that a set $X \subseteq \R^d$ is called \emph{fat} if it is contained in the closure of its interior: $X \subseteq \ol{X^\o}$.
It is called \emph{simple} if each $x \in \ol X$ has a basis of neighborhoods $\sU$ such that $U \cap X^\o$ is connected for all $U \in \sU$.
In the terminology of \cite{Rainer18}, the results (1) and (2) mean that H\"older sets and simple fat closed subanalytic sets are
\emph{$\cA^\infty$-admissible} (just as open sets). These closed sets may have cusps (e.g.\ $\{(x,y) \in \R^2 : x\ge 0,\, -x^2 \le y \le x^2\}$)
and horns (e.g.\ $\{(x,y) \in \R^2 : x\ge 0,\, x^2 \le y \le 2 x^2\}$).

The sharpness of the cusps and horns is related to the loss of regularity implicit in (1) and (2).
In fact, to determine the derivatives of order $n$ of an arc-smooth function $f$ at boundary points of $X$,
derivatives of higher order $N$ of the composites $f \o c$ are needed.
In this paper, we will establish an explicit and generally optimal connection between the
cuspidality of the (boundary of the) sets and the discrepancy between $N$ and $n$.
In the process, we shall also see how flatness of $f \o c$ affects flatness of $f$.

In order to formulate our results, we refine the notion of arc-smooth functions to arc-differentiable functions.

\subsection{Arc-differentiable functions}

Let $X$ be a non-empty subset of $\R^d$.
We denote by $\cC^\infty(\R,X)$ the set of all $\cC^\infty$-curves $c : \R \to \R^d$ with $c(\R) \subseteq X$.
For a real valued function $f : X \to \R$ on $X$ we consider
\[
f_* \cC^\infty(\R,X) := \big\{f_* c := f \o c : c \in \cC^\infty(\R,X)\big\}.
\]
For $k \in \N$ and $\be \in (0,1]$
we define
\[
  \cA^{k,\be}(X) := \big\{f : X \to \R : f_* \cC^\infty(\R,X) \subseteq \cC^{k,\be}(\R,\R)\big\}.
\]
The elements of $\cA^{k,\be}(X)$ are called \emph{arc-differentiable functions (of class $\cC^{k,\be}$)} or \emph{arc-$\cC^{k,\be}$ functions} on $X$.
Recall that $\cC^{k,\be}(\R,\R)$ is the space of $k$-times continuously differentiable functions $c : \R \to \R$ such that the $k$-th derivative $c^{(k)}$ is
locally $\be$-H\"older continuous.

For each $\be \in (0,1]$,
\begin{align*}
   \cA^\infty(X) := \big\{f : X \to \R : f_* \cC^\infty(\R,X) \subseteq \cC^{\infty}(\R,\R)\big\} = \bigcap_{k \in \N} \cA^{k,\be}(X).
\end{align*}
The elements of $\cA^\infty(X)$ are the \emph{arc-smooth functions} on $X$.
It is immediate from the definition that arc-smooth mappings can be composed:
if $X_i \subseteq \R^{d_i}$, $i=1,2$, $f \in \cA^\infty(X_2)$, and $\vh = (\vh_1,\ldots,\vh_{d_2}) : X_1 \to X_2$ with $\vh_j \in \cA^\infty(X_1)$, $j=1,\ldots,d_2$,
then $f \o \vh \in \cA^\infty(X_1)$.

Assume that $X \subseteq \R^d$ is a fat closed set so that $X = \ol {X^\o}$.
By definition, $\cC^{k,\be}(X)$ is the set of functions $f : X \to \R$ such that
\begin{enumerate}
  \item $f|_{X^\o} \in \cC^{k}(X^\o)$,
  \item all derivatives $(f|_{X^\o})^{(j)} : X^\o  \to L^j(\R^d,\R)$, $j \le k$, have continuous extensions
  $f^{(j)} : X \to L^j(\R^d,\R)$ to $X$,
  \item $f^{(k)}$ is $\be$-H\"older continuous on compact subsets of $X$.
\end{enumerate}
By $\cC^{k}(X)$ we mean the set of functions $f : X \to \R$ satisfying (1) and (2),
and $\cC^{\infty}(X) := \bigcap_{k \in \N} \cC^{k}(X)$.
These definitions also apply to open sets $X$; in that case (2) is vacuous.
(Note that the definition of $\cC^{\infty}(X)$ differs from the one in \cite{Rainer18}
but in most cases treated in this paper they are equivalent; cf.\ \cite[Lemma 1.10]{Rainer18}.)

Let $Y$ be a subset of $X$. Let $\cA^{k,\be}_Y(X)$ (resp.\ $\cA^{\infty}_Y(X)$) be the set of all
$f \in \cA^{k,\be}(X)$ (resp.\ $f \in \cA^{\infty}(X)$) such that for all $y \in Y$ and all $c \in \cC^\infty(\R,X)$ with $c(0)=y$ we have
\[
  (f \o c)^{(j)}(0) = 0 \quad \text{ for } j \le k \quad\text{ (resp.\ $j \in \N$).}
\]
Similarly, let $\cC^{k,\be}_Y(X)$ (resp.\ $\cC^{\infty}_Y(X)$) be the set of all
$f \in \cC^{k,\be}(X)$ (resp.\ $f \in \cC^{\infty}(X)$) such that
\[
    f^{(j)}|_Y = 0  \quad \text{ for } j \le k \quad\text{ (resp.\ $j \in \N$).}
\]
Then we say that $f$ is \emph{$k$-flat} (resp.\ \emph{$\infty$-flat}) on $Y$.

The $\be$-H\"older condition can be generalized to an $\om$-H\"older condition, 
where $\om$ is a \emph{modulus of continuity},
i.e., an increasing subadditive function $\om : [0,\infty) \to [0,\infty)$ 
such that $\lim_{t \to 0}\om(t) = 0$.
In that case we write $\cA^{k,\om}$, $\cA^{k,\om}_Y$, $\cC^{k,\om}$, and $\cC^{k,\om}_Y$.

\subsection{Main results}

If $X \subseteq \R^d$ is an open set, then $\cA^{k,\be}(X) = \cC^{k,\be}(X)$, by \cite[Theorem 2]{Boman67},
$\cA^{k,\om}(X) = \cC^{k,\om}(X)$, by \cite[Th\'eor\`eme 1]{Faure89}, and consequently $\cA^\infty(X) = \cC^\infty(X)$.
We see that on open sets no loss of regularity occurs.
Moreover, there is no gain (or loss) in considering
smooth plots $p : \R^e \to X \subseteq \R^d$ of arbitrary dimension $e$ in the definition of $\cA^{k,\be}$.
We want to point out that the H\"older condition on the derivative of highest order cannot be omitted without replacement:
$f_* \cC^\infty(\R,X) \subseteq \cC^k(\R,\R)$ does not guarantee $f \in \cC^k(X)$. 
For instance, the map $f : \R^2 \to \R^2$ given by $(x,y) \mapsto \big(\frac{x^3-3x y^2}{x^2+y^2}, \frac{3 x^2 y -y^3}{x^2 + y^2}\big)$ 
(or $(r,\th) \mapsto (r,3\th)$ in polar coordinates)
is not differentiable at the origin, but $f \o c$ is $\cC^1$ for each $C^1$-curve $c : \R \to \R^2$;
cf.\ \cite[Example 3.3]{KM97}.

Let $\al \in (0,1]$. By an \emph{$\al$-set} we mean a closed fat set $X \subseteq \R^d$ such that $X^\o$
has the uniform $\al$-cusp property. If $X$ is compact, then this is equivalent to the fact that $X$
has $\al$-H\"older boundary.
A \emph{H\"older set} is an $\al$-set for some $\al \in (0,1]$.
For precise definitions see \Cref{sec:ucp}.

Note that $\al$ measures the `cuspidality' of an $\al$-set.
We associate two integers with $\al$ which measure the mentioned loss of regularity of arc-smooth functions on $\al$-sets:
\begin{align} \label{eq:defpq}
  p(\al) := \left\lceil\frac{2}{\al} \right\rceil \quad \text{ and } \quad q(\al) := \left\lceil\frac{1}{\al} \right\rceil,
\end{align}
where $\lceil x \rceil$ is the least integer greater or equal $x$.

\begin{maintheorem} \label{main:A}
  Let $X \subseteq \R^d$ be an $\al$-set. Then:
  \begin{enumerate}
    \item For all $\be \in (0,1]$ and $n \in \N$ we have
    \[
    \cA^{np(\al),\be}(X) \subseteq \cC^{n,\frac{\al\be}{2q(\al)}}(X).
    \]
    More generally, if $\om$ is a modulus of continuity, then
    \[
    \cA^{np(\al),\om}(X) \subseteq \cC^{n,\widetilde \om}(X),
    \]
    where $\widetilde \om(t) = \om(t^{\frac{\al}{2q(\al)}})$.
    \item   If $Y$ is any subset of $X$, then $\cA^{np(\al),\om}_Y(X) \subseteq \cC^{n,\widetilde \om}_Y(X)$.
  \end{enumerate}
\end{maintheorem}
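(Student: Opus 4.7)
The plan is to combine Boman--Faure on the interior with a curve-extraction argument at boundary points based on the uniform $\al$-cusp property (\Cref{sec:ucp}). By \cite{Boman67,Faure89}, the restriction $f|_{X^\o}$ already belongs to $\cC^{np(\al),\om}(X^\o)$, so all partial derivatives $f^{(j)}|_{X^\o}$ of order up to $np(\al)$ exist and have the correct modulus of continuity on $X^\o$. The remaining work is (a) to extend $f^{(j)}$ continuously to $X$ for $j\le n$, and (b) to control the modulus of continuity of $f^{(n)}$ across $\d X$ by $\widetilde\om$.

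For (a), fix $x_0\in\d X$. The uniform $\al$-cusp property supplies, in suitable local coordinates, an inward cusp $\{x_0+(s,y):0\le s\le\ep,\,|y|\le c s^{1/\al}\}\subseteq X$ with constants uniform in $x_0$. The integer $p(\al)=\lceil 2/\al\rceil$ is calibrated so that one can build smooth test curves $c_w\in\cC^\infty(\R,X)$ with $c_w(0)=x_0$ whose Taylor expansion at $0$ has leading term of order $p(\al)$ in a chosen admissible direction $w$; the inequality $p(\al)\cdot\al\ge 2$ is exactly what forces the transverse cusp width $s^{1/\al}$ to dominate the matching power of $t$. A Fa\`a di Bruno expansion of $(f\o c_w)^{(np(\al))}(0)$ then isolates $f^{(n)}(x_0)(w^{\otimes n})$ modulo derivatives $f^{(m)}(x_0)$, $m<n$, whose coefficients come from the lower-order Taylor coefficients of $c_w$. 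Inducting on $n$ and polarizing in $w$ across a spanning family of admissible directions recovers each $f^{(j)}(x_0)$, $j\le n$, as an explicit linear combination of data $(f\o c)^{(i)}(0)$ with $i\le np(\al)$. Running the same recovery at interior points $x\in X^\o$ approaching $x_0$ identifies the candidate with $\lim_{X^\o\ni x\to x_0}f^{(j)}(x)$, yielding $f\in\cC^n(X)$.

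For (b), given $x_0,x_1\in X$ with $\de:=|x_0-x_1|$ small, one builds a single $\cC^\infty$-curve $c:\R\to X$ through $x_0$ at $t=0$ and $x_1$ at $t=\tau\asymp\de^{\al/(2q(\al))}$, by entering $X^\o$ along the cusp at $x_0$ to depth $\sim\tau^{q(\al)}$ (at which the opposing cusp widths $\sim\tau^{q(\al)/\al}\gtrsim\de$ overlap, so the interior segment stays in $X^\o$), traversing an interior arc from $x_0$ to $x_1$ at that depth, and exiting along the cusp at $x_1$, with smooth transitions at the two junctions. The exponent $\al/(2q(\al))$ is exactly the minimal time in which such a concatenation can be realized as a $\cC^\infty$-curve inside $X$, reflecting the balance between the cusp-entry exponent $p(\al)$ and the transverse scale $q(\al)=\lceil 1/\al\rceil$. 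Applying the recovery of (a) at both $t=0$ and $t=\tau$ together with the bound $|(f\o c)^{(np(\al))}(\tau)-(f\o c)^{(np(\al))}(0)|\lesssim \om(\tau)$ then gives
\[
|f^{(n)}(x_1)-f^{(n)}(x_0)|\lesssim \om(\tau)=\widetilde\om(\de).
\]
Part (2) follows immediately from the same recovery: if $(f\o c)^{(j)}(0)=0$ for all $j\le np(\al)$ and all $c\in\cC^\infty(\R,X)$ with $c(0)\in Y$, then each linear combination expressing $f^{(j)}(y)$, $j\le n$, $y\in Y$, must vanish.

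The main technical obstacle is the boundary-to-boundary curve in (b): achieving the sharp time $\tau\asymp\de^{\al/(2q(\al))}$ with full $\cC^\infty$-control uniform in $(x_0,x_1)$, while staying in $X$ and preserving the Taylor structure needed for the Fa\`a di Bruno extraction at both endpoints, requires a careful smooth interpolation between two (possibly differently oriented) cusps through an interior arc of precisely controlled length; this is where $q(\al)$ enters separately from $p(\al)$ and fixes the stated optimal exponent.
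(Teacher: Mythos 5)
Your part (a) is essentially the paper's argument: the curves $c_{x,v}(t)=x+(t^{p(\al)}v',t^2v_d)$ into the cusp, the extraction of $f'(x)(v)$ from $(f\o c_{x,v})^{(p(\al))}(0)$ via the Fa\`a di Bruno-type formula of \Cref{lem:Faa}, and induction on $n$ (this is \Cref{Krieglcusp} and \Cref{thm:Cn}). One point you gloss over: continuity of the extension at the boundary is not obtained by ``running the recovery at interior points,'' but by observing that for a smooth curve $s\mapsto x(s)$ in $X$ the two-parameter map $(s,t)\mapsto f(c_{x(s),v}(t))$ is $\cC^{k,\om}$ by Boman--Faure on an open strip, so that $s\mapsto f'(x(s))(v)$ is $\cC^{k-p(\al),\om}$; this same two-parameter device is also what makes your ``part (2) follows immediately'' rigorous (one needs that flatness along all curves through $(0,0)$ forces flatness of the two-variable function there, via polarization).

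Part (b) has a genuine gap. Your estimate $|f^{(n)}(x_1)-f^{(n)}(x_0)|\lesssim\om(\tau)$ requires the bound $|(f\o c)^{(np(\al))}(\tau)-(f\o c)^{(np(\al))}(0)|\le C\,\om(\tau)$ with a constant $C$ \emph{uniform over the two-parameter family of curves} $c=c_{x_0,x_1}$. But the hypothesis $f\in\cA^{np(\al),\om}(X)$ is purely qualitative: it gives each individual composite $f\o c$ a local $\om$-H\"older constant depending on $c$, with no uniformity across a family. This is precisely the crux of the H\"older part, and the paper resolves it by a completely different mechanism: one argues by contradiction, extracts a sequence of point pairs $(a_n,b_n)$ violating the H\"older estimate with constants $n2^n$, and glues the corresponding (B\'ezier-type) arcs into a \emph{single} $\cC^\infty$-curve via the General Curve Lemma (Step~0 of \Cref{sec:Hoelder}); the single composite $f\o c$ must then be $\cC^{0,\om}$, which contradicts \eqref{eq:contradiction}. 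Without such a uniform-boundedness-via-gluing argument (or a closed-graph substitute), your direct two-point estimate does not close. A secondary issue is that your single exponent $\tau\asymp\de^{\al/(2q(\al))}$ conflates two distinct steps of the paper: the exponent $\frac{1}{2q(\al)}$ is what the curve construction yields inside a fixed translated cubical cusp $\Si_{q}(y,1)\subseteq X$, while the extra factor $\al$ arises only afterwards, in Case~(i) of Step~3, from comparing two arbitrary nearby points through the $\al$-H\"older graph description of $\p X$; a single boundary-to-boundary smooth curve at time scale $\de^{\al/(2q(\al))}$ with uniform $\cC^\infty$-control is not constructed (and the set being only $\tfrac1\al$-regular, its existence is not clear). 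Finally, note that the paper does not apply the extraction to $f$ at order $np(\al)$ to get H\"olderness of $f^{(n)}$; it first proves \Cref{thm:Cn}, records that $f^{(n)}\in\cA^{0,\om}(X,L^n(\R^d,\R))$ by \eqref{eq:derA}, and then proves the purely zeroth-order statement \Cref{thm:Hoelder0}.
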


Note that $\widetilde \om$, being the composite of two moduli of continuity, is again a modulus of continuity.
The loss of derivatives expressed by $p(\al)$ is essentially best possible, see \Cref{lem:ploss} and \Cref{ex:lossofderivatives}.
In the Lipschitz case, where $\al =1$, $p(\al)=2$, and $q(\al)=1$,
also the degradation of the H\"older index $\be$ to $\frac{\be}2$ is optimal, see \Cref{ex:degradation}.
There are many examples, where the degradation $\frac{\al}{2q(\al)}$ can actually be replaced by $\frac{1}{2q(\al)}$.
We do not know whether in general the worse factor $\frac{\al}{2q(\al)}$ is really necessary or just an artefact of our proof;
see \Cref{rem:square}.

The supplement (2) is particularly interesting for $Y = \p X$.
The reason it is true is that all boundary points $x$ of an $\al$-set $X$ can be reached by $\cC^\infty$-curves in $X$ that
vanish of finite order (at most $p(\al)$) at $x$. See \Cref{ex:irrational} for an interesting set of finite cuspidality
with a boundary point at which all $\cC^\infty$-curves in the set must vanish to infinite order
and hence cannot discriminate points of flatness.

\Cref{main:A} implies the following corollary the first part of which was also obtained in \cite{Rainer18}.

\begin{maincorollary} \label{main:Aa}
  Let $X \subseteq \R^d$ be a H\"older set (and $Y$ any subset of $X$).
  The elements of $\cA^\infty(X)$ (of $\cA^\infty_Y(X)$) are precisely the restrictions to $X$
  of $\cC^\infty$-functions on $\R^d$ (that vanish to infinite order on $Y$).
\end{maincorollary}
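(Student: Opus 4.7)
The plan is to derive \Cref{main:Aa} by combining \Cref{main:A} with a $\cC^\infty$-extension result for H\"older sets; the extension result is exactly what was used in \cite{Rainer18} to handle the non-flat half of the corollary.

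\textbf{Trivial inclusion.} Suppose $F \in \cC^\infty(\R^d)$ and set $f = F|_X$. For any $c \in \cC^\infty(\R,X)$ the chain rule gives $f \o c = F \o c \in \cC^\infty(\R,\R)$, hence $f \in \cA^\infty(X)$. If in addition $F$ is $\infty$-flat on $Y$, then by the Fa\`a di Bruno formula $(f\o c)^{(j)}(0)$ at any $c(0) = y \in Y$ is a polynomial in the partial derivatives $F^{(\ell)}(y)$ of order $\ell \le j$; all of these vanish, so $f \in \cA^\infty_Y(X)$.

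\textbf{Nontrivial inclusion.} Let $\alpha \in (0,1]$ be such that $X$ is an $\alpha$-set, and fix $\beta \in (0,1]$ (say $\beta = \alpha$). If $f \in \cA^\infty(X) = \bigcap_{k} \cA^{k,\beta}(X)$, then in particular $f \in \cA^{np(\alpha),\beta}(X)$ for every $n \in \N$. By \Cref{main:A}(1),
\[
    f \in \bigcap_{n \in \N} \cC^{n,\frac{\alpha\beta}{2q(\alpha)}}(X) \subseteq \cC^\infty(X),
\]
so $f|_{X^\o}$ is smooth and every derivative $(f|_{X^\o})^{(j)}$ extends continuously to $X$. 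If moreover $f \in \cA^\infty_Y(X)$, the same argument using \Cref{main:A}(2) yields $f \in \bigcap_n \cC^{n,\widetilde\om}_Y(X)$, that is, $f^{(j)}|_Y = 0$ for every $j \in \N$.

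\textbf{Extension step.} It remains to extend $f \in \cC^\infty(X)$ to some $F \in \cC^\infty(\R^d)$, preserving $\infty$-flatness on $Y$ in the flat case. Since $X$ is an $\alpha$-set, its interior satisfies the uniform $\alpha$-cusp property; this gives enough interior access at every boundary point to invoke the $\cC^\infty$-extension theorem for H\"older (in particular, cuspidal) sets already used in \cite{Rainer18}. Concretely, from the continuous boundary values of the $f^{(j)}$ one assembles a Whitney jet on $X$ satisfying Whitney's compatibility estimates (the estimates in the definition of $\cC^\infty(X)$ combined with the cusp property give this), and the Whitney extension theorem produces $F \in \cC^\infty(\R^d)$ with $F|_X = f$. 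In the flat case, adjoining the zero jet on $Y$ and invoking Whitney extension on $X \cup Y$ produces an $F$ that is in addition $\infty$-flat on $Y$.

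\textbf{Main obstacle.} The two easy directions and the passage from $\cA^\infty$ to $\cC^\infty(X)$ via \Cref{main:A} are essentially formal once \Cref{main:A} is available. The real work is the extension step: one must show that the Whitney-type derivatives produced in \Cref{main:A} satisfy the compatibility estimates of a Whitney jet on $X$, so that $\cC^\infty(X)$ in our sense coincides with the space of (restrictions of) Whitney $\cC^\infty$-jets. This is precisely where the H\"older/cusp geometry of $X$ is used, and is the delicate technical ingredient carried over from \cite{Rainer18}.
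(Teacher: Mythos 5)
Your proof is correct and follows essentially the same route as the paper: \Cref{thm:Cn} (equivalently \Cref{main:A}) gives $f \in \cC^\infty(X)$ (resp.\ $\cC^\infty_Y(X)$), and the $\tfrac{1}{\alpha}$-regularity of $\alpha$-sets (\Cref{prop:alpharegular}, via Bierstone's Proposition 2.16) is exactly the geometric input that turns the continuously extended derivatives into a Whitney jet, to which Whitney's extension theorem applies. The only remark is that your flat case is slightly more complicated than necessary: since $Y \subseteq X$, there is no need to adjoin a zero jet on $X \cup Y$ --- any Whitney extension $F$ of the jet already satisfies $F^{(j)}|_Y = f^{(j)}|_Y = 0$ for all $j$.
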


In the second part of the paper, we will extend our results to subanalytic sets and, more generally,
to sets that are definable in certain polynomially bounded o-minimal expansions of the real field.
These families of sets also admit horns which are not possible for H\"older sets.
The assumption that the o-minimal structure is polynomially bounded is essential, since
on an infinitely flat cusp $X$ there are functions $f \in \cA^\infty(X)$ that are not of class $\cC^1$,
see \cite[Example 10.4]{Rainer18} and \Cref{ex:lossofderivatives}.

An important requirement of the proof is that the sets under consideration are \emph{uniformly polynomially cuspidal (UPC)}; cf.\ \cite{PawluckiPlesniak86}.
Recall that a closed subset $X \subseteq \R^d$ is UPC if
there exist positive constants $M,m$ and a positive integer $N$ such that
for each $x \in X$ there is a polynomial curve $h_x : \R \to \R^d$ of degree at most $N$
satisfying
\begin{enumerate}
  \item $h_x(0)=x$,
  \item $\on{dist}(h_x(t),\R^d \setminus X) \ge M t^m$ for all $x \in X$ and $t \in [0,1]$.
\end{enumerate}
Note that (2) implies $h_x((0,1]) \subseteq X^\o$.
We may assume that $m$ is a positive integer. We call the reciprocal $\al = \frac{1}{m}$ a \emph{UPC-index} of $X$.
It is again a measure for the `cuspidality' of $X$.

Let $X \subseteq \R^d$ be a fat compact definable set, that means definable with respect to a fixed
polynomially bounded o-minimal expansion of the real field. We shall see that if $X$ admits
\emph{smooth rectilinearization} (defined in \Cref{ass:smoothrect}), then $X$ is UPC and we have the following theorem.

\begin{maintheorem} \label{main:B}
  Let $X \subseteq \R^d$ be a simple fat compact definable set
  admitting smooth rectilinearization
  and let $\al$ be a UPC-index of $X$.
  Let $n \in \N_{\ge 1}$ and $\be \in (0,1]$.
  Then for each $f \in \cA^{np(\al),\be}(X)$ (resp.\ $f \in \cA^{np(\al),\be}_Y(X)$ where $Y$ is any subset of $X$)
  the Fr\'echet derivatives $f^{(p)}$, $p\le n$, are globally bounded on $X^\o$
  and for $p \le n-1$ they extend continuously to $\p X$ (and vanish on $Y$).
  The statement remains true if $\be$ is replaced by an arbitrary modulus of continuity $\om$.
\end{maintheorem}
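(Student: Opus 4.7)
The plan is to reduce Theorem B to \Cref{main:A} via the smooth rectilinearization hypothesis. Smooth rectilinearization (as in \Cref{ass:smoothrect}) should supply a finite family of smooth definable maps $\varphi_i : V_i \to X$, where each $V_i$ is a simple $\al_i$-set (a H\"older set) for some $\al_i \in (0,1]$ compatible with the UPC-index $\al$ of $X$, such that $X = \bigcup_i \varphi_i(V_i)$ and $\varphi_i$ restricts to a diffeomorphism between (most of) $V_i^\o$ and an open piece of $X^\o$. Since $X$ is fat, $X^\o$ is dense in $X$ and Boman's theorem on $X^\o$ already gives that $f \in \cA^{np(\al),\be}(X)$ is smooth on $X^\o$ in the classical sense, so the derivatives $f^{(p)}$ are unambiguously defined there.

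First I would check that arc-differentiability is stable under precomposition with a smooth map: for any smooth curve $c : \R \to V_i$ the composite $\varphi_i \o c$ is a smooth curve into $X$, hence $(f \o \varphi_i) \o c = f \o (\varphi_i \o c) \in \cC^{np(\al),\om}(\R,\R)$, and flatness on $Y$ transports to flatness of $f \o \varphi_i$ on $\varphi_i^{-1}(Y)$. Thus $f \o \varphi_i \in \cA^{np(\al),\om}(V_i)$ (resp.\ $\cA^{np(\al),\om}_{\varphi_i^{-1}(Y)}(V_i)$). Applying \Cref{main:A} to the $\al_i$-set $V_i$ yields
\[
  f \o \varphi_i \in \cC^{n,\widetilde \om_i}(V_i) \quad (\text{resp.\ } \cC^{n,\widetilde\om_i}_{\varphi_i^{-1}(Y)}(V_i)),
\]
so all Fr\'echet derivatives of $f \o \varphi_i$ of order $\le n$ are continuous on the compact set $V_i$ (and vanish on $\varphi_i^{-1}(Y)$).

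Next I would push the estimates back to $X$. For $x \in X^\o$, pick $i$ and $v \in V_i^\o$ with $\varphi_i(v) = x$ and $d\varphi_i(v)$ invertible; the chain rule expresses $f^{(p)}(x)$ for $p \le n$ as a universal polynomial in the derivatives $(f \o \varphi_i)^{(\le p)}(v)$ and the entries of $d\varphi_i(v)^{-1}$, together with higher derivatives of $\varphi_i$. Smoothness of $\varphi_i$ on the compact $V_i$ controls the derivatives of $\varphi_i$ uniformly, and the UPC curves $h_x$ supplied by the UPC-index $\al$ give, via bounds on $\|(d\varphi_i)^{-1}\|$ in terms of $\on{dist}(\cdot,\p X)^{-1}$ and the polynomial control $\on{dist}(h_x(t),\R^d\setminus X) \ge M t^m$, the uniform boundedness of $f^{(p)}$ on $X^\o$ for $p \le n$. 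For $p \le n-1$, one derivative of slack is available to absorb the blow-up of $d\varphi_i^{-1}$ near $\p V_i$, which yields continuous extension of $f^{(p)}$ to $\p X$ (and vanishing on $Y$, since then every factor involving $(f \o \varphi_i)^{(\le p)}$ vanishes on $\varphi_i^{-1}(Y)$). The modulus-of-continuity version follows by the same argument with $\widetilde\om$ in place of a H\"older exponent.

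The main obstacle is the third step: controlling the chain-rule inversion through the rectilinearization maps near their critical locus, i.e., relating the polynomial degeneration of $d\varphi_i^{-1}$ at $\p V_i$ to the UPC-cuspidality $\al$ of $X$ in a way that precisely accounts for the $np(\al)$ derivatives we are allowed to spend and explains the loss of one derivative between the ``bounded on $X^\o$'' conclusion at order $n$ and the ``continuously extended to $\p X$'' conclusion at order $n-1$. This is also where the smooth rectilinearization hypothesis must be used most carefully, to ensure that the covering maps $\varphi_i$ are tame enough for the chain-rule estimates to match the UPC growth $\sim t^m$ with $m = 1/\al$.
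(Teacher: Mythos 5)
Your overall strategy (pull $f$ back by the rectilinearization maps, apply \Cref{main:A} on a H\"older parameter set, push the conclusion forward) is in the right spirit, and the first two steps are fine. But the third step, which you yourself flag as the main obstacle, is a genuine gap and not a technicality. The rectilinearization maps $\ps_j$ of \Cref{ass:smoothrect} are \emph{not} diffeomorphisms onto their images: their differentials necessarily degenerate on the preimage of $\p X$ (already for the cusp $\{|y|\le x^2\}$ a rectilinearizing map looks like $(u,v)\mapsto(u,u^2v)$), so $d\varphi_i^{-1}$ blows up there and the chain-rule expression for $f^{(p)}(x)$ in terms of $(f\o\varphi_i)^{(\le p)}$ involves uncontrolled negative powers of the Jacobian. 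Nothing in the hypotheses relates the rate of this degeneration to the budget of $np(\al)$ derivatives, and no amount of ``one derivative of slack'' is known to absorb it. The paper avoids this inversion entirely: for global boundedness (\Cref{lem:bdder}) it composes $f$ with $(s,t_1,t_2)\mapsto \vh_{j_0}(c(s),t_1)+t_2v$, and for the boundary extension with $\Ps_{x,v}(t)=h_x(t_1)+t_2v_2+\cdots+t_dv_d$ built from the UPC curve $h_x$; these maps are \emph{affine in the transversal variables}, the {\L}ojasiewicz/UPC distance estimate \eqref{eq:Lojasiewicz2} guarantees the image stays in $X$, the parameter domain is an $\al$-set so \Cref{main:A} applies, and the directional derivatives $d_v^k f$ are then read off directly as partial derivatives of the composite --- no inversion of any differential is needed.

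A second, independent gap: your argument never uses the hypothesis that $X$ is \emph{simple}, yet this hypothesis cannot be dropped (cf.\ \cite[Example 10.5]{Rainer18}). Even granting local extensions of $f^{(k)}$ along each piece $\varphi_i(V_i)$ or each horn $H_x$, one must show these limits agree and yield a single continuous extension as $X^\o\ni y\to x$ from arbitrary directions. In the paper this is done via \Cref{lem:quasiconvex} (simplicity plus the quasiconvex decomposition of definable sets gives uniformly short rectifiable paths in $X^\o$ joining nearby interior points), combined with the mean value theorem and the global bound on $f^{(k+1)}$; this is the actual source of the discrepancy between ``bounded at order $n$'' and ``continuously extendable at order $n-1$'', not the blow-up of $d\varphi_i^{-1}$. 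Your proposal would need to supply both of these mechanisms to close.
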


Being definable in an polynomially bounded o-minimal structure, $X$ is $m$-regular for some $m \in \N_{\ge 1}$
and, consequently, $f \in \cC^{n-1,\frac{1}m}(X)$.

As a consequence of \Cref{main:B} we obtain

\begin{maincorollary} \label{main:C}
  Let $X \subseteq \R^d$ be a simple fat compact definable set
  admitting smooth rectilinearization (and $Y$ any subset of $X$).
  The elements of $\cA^\infty(X)$ (of $\cA^\infty_Y(X)$) are precisely the restrictions to $X$
  of $\cC^\infty$-functions on $\R^d$ (that vanish to infinite order on $Y$).
\end{maincorollary}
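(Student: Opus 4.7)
The ``if'' direction is immediate from the chain rule: if $F \in \cC^\infty(\R^d)$ and $c \in \cC^\infty(\R,X)$, then $F \o c \in \cC^\infty(\R,\R)$, so $F|_X \in \cA^\infty(X)$. Moreover, by the Fa\`a di Bruno formula $(F \o c)^{(j)}(0)$ is a universal polynomial in the values $F^{(i)}(c(0))$ and $c^{(\ell)}(0)$; hence if $F$ is $\infty$-flat on $Y$ and $c(0) \in Y$, all these derivatives vanish, so $F|_X \in \cA^\infty_Y(X)$.

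For the converse, the plan is to use \Cref{main:B} to promote arc-smoothness to membership in $\cC^\infty(X)$, and then invoke a Whitney-type extension theorem to extend to $\R^d$. Let $f \in \cA^\infty(X)$. Since $\cA^\infty(X) \subseteq \cA^{n p(\al),1}(X)$ for every $n \in \N_{\ge 1}$, applying \Cref{main:B} with this $n$ shows that each Fr\'echet derivative $f^{(p)}$ with $p \le n-1$ is globally bounded on $X^\o$ and extends continuously to $\p X$. Letting $n \to \infty$ gives $f \in \cC^\infty(X)$. In the $\infty$-flat case, \Cref{main:B} additionally yields $f^{(p)}|_Y = 0$ for every $p$.

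It remains to extend $f$ to a $\cC^\infty$-function on $\R^d$ (respecting the flatness on $Y$). A simple fat compact definable set admitting smooth rectilinearization is UPC (this being part of the setup of \Cref{main:B}), so one may invoke the extension theorem of Pawłucki-Pleśniak, which produces a continuous linear operator $\cC^\infty(X) \to \cC^\infty(\R^d)$. Applied to our $f$ this yields the required $F \in \cC^\infty(\R^d)$ with $F|_X = f$. The flatness-preservation on $Y$ can be read off from the fact that the PP extension is built from explicit Markov-type kernels whose values on $Y$ only see the jet of $f$ there (which vanishes); alternatively, it can be imposed afterwards by a standard cutoff/Seeley correction applied in a tubular neighborhood of $Y$.

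The main obstacle is the identification of the paper's $\cC^\infty(X)$ (all Fr\'echet derivatives extend continuously from $X^\o$ to $X$) with the Whitney class $\cE^\infty(X)$ to which the Pawłucki-Pleśniak operator applies. For UPC sets this is standard: the polynomial cusps $h_x$ from the UPC condition allow one to estimate Taylor remainders of $f \in \cC^\infty(X)$ along curves approaching $\p X$, and combined with the global boundedness of all $f^{(p)}$ on $X^\o$ this supplies the Whitney jet estimates needed for extension. Together with the reverse inclusion already treated, this establishes \Cref{main:C}.
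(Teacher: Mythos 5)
Your proposal is correct in substance and follows essentially the same route as the paper: \Cref{main:B} (applied for every $n$) gives $f\in\cC^\infty(X)$, the geometry of $X$ upgrades this to a Whitney jet of class $\cC^\infty$, and a Whitney-type extension theorem finishes the argument; the converse direction is the chain rule, exactly as you say.

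Two details are worth tightening. First, the identification of $\cC^\infty(X)$ with the Whitney class is obtained in the paper from \emph{Whitney regularity} of compact connected definable sets (any two points $x,y\in X$ are joined by a rectifiable path in $X$ of length $\le C|x-y|^r$), which lets one integrate $f^{(j+1)}$ along such paths to verify Whitney's conditions; the UPC cusps $h_x$ are not the right tool here, since they connect a boundary point to the interior rather than controlling pairs of nearby points of $X$. The fact you need is standard (Bierstone, Proposition 2.16) but the hypothesis feeding it is $p$-regularity, not UPC. Second, the flatness on $Y$ requires neither the specific structure of the Paw{\l}ucki--Ple\'sniak kernels nor any Seeley/cutoff correction: since $X$ is fat, \emph{any} $\cC^\infty$-extension $F$ of $f$ satisfies $F^{(j)}|_{X^\o}=f^{(j)}|_{X^\o}$ and hence $F^{(j)}|_X=f^{(j)}$ by continuity, so $F^{(j)}|_Y=0$ automatically once \Cref{main:B} gives $f\in\cC^\infty_Y(X)$. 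The proposed ``correction in a tubular neighborhood of $Y$'' would in fact be problematic, as $Y$ is an arbitrary subset of $X$ (no tubular neighborhood exists in general) and any modification near $Y\subseteq X$ risks destroying $F|_X=f$. With these adjustments your argument coincides with the paper's; in particular the continuous linear extension operator is more than is needed, plain Whitney extension suffices.
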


In particular, \Cref{main:B} and \Cref{main:C} hold for each simple fat compact $X \subseteq \R^d$ that is
subanalytic or definable in a structure $\R_\cQ$, where $\cQ$ is a suitable quasianalytic class; cf.\ \cite{BM04, RolinSpeisseggerWilkie03}.
It is easy to see (cf.\ \cite[Example 10.5]{Rainer18}) that the assumption that $X$ be simple cannot be
omitted; note that H\"older sets are simple by definition.
We do not know if the assumption of smooth rectifiability is necessary.

As a by-product of our proofs
we obtain a result (\Cref{cor:weakflat}) on \emph{weakly flat} functions on closed UPC sets (not necessarily simple or definable)
in the spirit of \cite{Spallek:1977wp}.

\subsection{Invariance under diffeomorphisms} \label{sec:invariance}

Let $X \subseteq \R^d$ be a closed fat set.
Suppose that $\vh : U \to V$ is a $\cC^\infty$-diffeomorphism between open sets $U,V \subseteq \R^d$ with $X \subseteq U$.
Then $Y := \vh(X)$ is a closed fat set. Fix $k \in \N$ and $\be \in (0,1]$.

Now $f \in \cA^{k,\be}(Y)$ if and only if $f \o \vh \in \cA^{k,\be}(X)$.
Indeed, if $c \in \cC^\infty(\R,Y)$ and $f \o \vh \in \cA^{k,\be}(X)$,
then $\vh^{-1} \o c \in \cC^\infty(\R,X)$ and
$f \o c = f \o \vh \o \vh^{-1} \o c$ is of class $\cC^{k,\be}$.
We also see easily that $f \in \cA^{k,\be}_{\vh(Z)}(Y)$ if and only if $f \o \vh \in \cA^{k,\be}_Z(X)$, where $Z$ is any subset of $X$.

Moreover,
$f \in \cC^{k,\be}(Y)$ if and only if $f \o \vh \in \cC^{k,\be}(X)$
and, more generally,
$f \in \cC^{k,\be}_{\vh(Z)}(Y)$ if and only if $f \o \vh \in \cC^{k,\be}_Z(X)$.
For, suppose that $f \in \cC^{k,\be}(Y)$. Then $f|_{Y^\o} \in \cC^k(Y^\o)$ and $\vh|_{X^\o} : X^\o \to Y^\o$ is $\cC^\infty$.
Thus $f \o \vh|_{X^\o} \in \cC^k(X^\o)$ and its derivatives up to order $k$ can be computed by Fa\`a di Bruno's formula.
In view of this formula we see that $f \o \vh|_{X^\o}$ and all its derivatives up to order $k$ extend continuously to $\p X$,
since this it true for $f$ and $\vh$ and their respective derivatives.
Similarly, one checks that the $k$-th order derivative of $f \o \vh$ satisfies a local $\be$-H\"older condition on $X$.

Clearly, invariance of $\cC^{k,\be}$ on closed fat sets holds even with respect to $\cC^{k,\be}$-diffeomorphisms.
And we have invariance of
\[
\ul \cA^{k,\be}(X) := \big\{f : X \to \R : f_* \cC^{k,\be}(\R,X) \subseteq \cC^{k,\be}(\R,\R)\big\}
\]
under $\cC^{k,\be}$-diffeomorphisms.
Indeed, the composite of $\cC^{k,\be}$-maps is $\cC^{k,\be}$, provided that $k\ge 1$; cf.\ \cite[6.2]{LlaveObaya99} or \cite[Theorem 2.7]{NenningRainer16}.
The inclusions $\ul \cA^{k,\be}(X) \subseteq \cA^{k,\be}(X)$ and $\ul \cA^{\infty}(X) := \bigcap_{k \in \N} \ul \cA^{k,\be}(X) \subseteq \cA^{\infty}(X)$ are evident.
Note that $\sqrt{\cdot} \not\in \ul \cA^{0,1}([0,\infty))$, but $\sqrt{\cdot} \in \cA^{0,1}([0,\infty))$,
by Glaeser's inequality (see \eqref{eq:Glaeser}), but in general it is not clear if the inclusions are strict.
However, as a consequence of \Cref{main:Aa} and \Cref{main:C} we have $\ul \cA^\infty(X) = \cA^\infty(X)$
for simple fat compact definable sets admitting smooth rectilinearization or H\"older sets.

All this is also true if $\be$ is replaced by a general modulus of continuity $\om$.

Since all notions are local, we see that our results continue to hold if the set $X$ in question is replaced by a set
that is locally diffeomorphic to $X$.

\subsection{Related results connecting analytic properties of functions with the geometry of their domain}

The influence of the geometric properties of the boundary of a domain on the analytic aspects of functions on that domain is well-known.
We want to mention some closely related results.
In \cite{Bos:1995wj}, the Sobolev--Gagliardo--Nirenberg inequalities and Markov type inequalities
are shown to be valid on compact subanalytic domains if the inequalities are equipped with a suitable parameter
which measures the cuspidality of the domain.
Markov type inequalities play an important role in approximation theory and differential analysis,
since they are intimately connected to the \emph{Whitney extension property (WEP)}, i.e., the existence
of a continuous linear extension operator of $\cC^\infty$ Whitney jets;
cf.\ \cite{PawluckiPlesniak86,PawluckiPlesniak88,Plesniak:1990aa,Bos:1995wj,Frerick:2007aa}.
UPC sets $X \subseteq \R^d$ satisfy the \emph{Markov inequality} (see \cite{PawluckiPlesniak86}):
there exist positive constants $C,r$ such that for all polynomials $p : \R^d \to \R$ we have
\begin{equation} \label{MI}
    \tag{MI}
   \sup_{x \in X} |\nabla p(x)| \le C (\deg p)^r \sup_{x \in X}|p(x)|.
\end{equation}
A compact set $X \subseteq \R^d$ has WEP if and only if a weaker inequality of Markov type holds (see \cite[Theorem 4.6]{Frerick:2007aa} and \eqref{eq:wMI}).
All sets in our results fulfil WEP,
in particular, they form examples of \emph{Whitney manifold germs} as introduced in \cite{Michor:2020ty};
see also \cite{Roberts:wy} for a related concept.

\subsection{Outline of the paper}

In \Cref{sec:Hoeldersets}, we define H\"older sets and collect some relevant properties.
\Cref{main:A} and \Cref{main:Aa} will be proved in the \Cref{sec:Cn,sec:Hoelder}.
While \Cref{thm:Cn} deals with the continuous extension of derivatives of
arc-differentiable functions to the boundary of a H\"older set,
\Cref{thm:Hoelder0} addresses their H\"older continuity.
In \Cref{sec:optimality}, we explore the optimality and limitations of \Cref{main:A} and show in particular
that the loss of derivatives and degradation of the H\"older index expressed by the integers $p(\al)$ and
$q(\al)$ is generally best possible.
\Cref{sec:definable} is devoted to the study of arc-differentiable functions on definable set,
in particular, to the proofs of \Cref{main:B} and \Cref{main:C}.

\subsection{Notation}

We use $\N = \{0,1,2,\ldots\}$ and $\N_{\ge k} = \{k,k+1,k+2,\ldots\}$.
Let $e_1,e_2,\ldots,e_d$ be the standard unit vectors of $\R^d$.
We endow $\R^d$ with the Euclidean norm $|x| = \big(\sum_{i=1}^d x_i^2\big)^{1/2}$.
The open ball in $\R^d$ with center $x$ and radius $r>0$ is denoted by $B(x,r) = \{y \in \R^d : |x-y|<r\}$.
For a function $f : U \to \R$ defined on an open subset $U$ of $\R^d$
let $f^{(k)}(x)(v_1,\ldots,v_k)$ be the $k$-th order Fr\'echet derivative at $x \in U$ evaluated at the vectors $v_1,\ldots,v_k \in \R^d$.
Then $f^{(k)}(x)$ is an element of the space $L^k(\R^d,\R)$ of $k$-linear mappings $\R^d \times \cdots \times \R^d \to \R$
which we endow with the operator norm $\|\cdot\|_{L^k(\R^d,\R)}$.
We write $d_v^k f(x) := \p_t^k|_{t = 0} f(x +  tv)$ for the $k$-fold directional derivative of $f$ at $x$ in direction $v$.
We will make use of the standard multi-index notation.

\section{H\"older sets} \label{sec:Hoeldersets}

In this section we review the uniform cusp property and H\"older sets.

\subsection{Truncated $\al$-cusps}
Let us consider $\R^d = \R^{d-1} \times \R$ with the Euclidean coordinates $x = (x_1,\ldots,x_d) = (x_{\le d-1},x_d) = (x',x_d)$.
Let $\al \in (0,1]$ and $r,h>0$.
The set
\[
  \Ga_d^{\al}(r,h)
  := \left\{(x',x_d) \in \R^{d-1} \times \R : |x'| < r ,\, h \big(\tfrac{|x'|}{r} \big)^{\al} < x_d < h\right\}
\]
is a \emph{truncated open $\al$-cusp} of radius $r$ and height $h$.

Note that $\Ga_d^{\al}(r,h)$ is the union of the images of all curves $c(t) = (t x', t^{\al}h)$, $t \in (0,1)$, with $|x'| < r$.
We could replace the $(d-1)$-dimensional ball $\{|x'| <r\}$ by an open polyhedron $P \subseteq \R^{d-1}$ containing the origin
and
consider the union $\Pi_d^{\al}(P,h)$ of
the images of all curves $c(t) = (t x', t^{\al}h)$, $t \in (0,1)$, with $x' \in P$.
Then there are radii $r_1 < r_2$ such that
\begin{equation} \label{eq:ballpolyhedron}
\Ga_d^{\al}(r_1,h) \subseteq \Pi_d^{\al}(P,h) \subseteq \Ga_d^{\al}(r_2,h).
\end{equation}

\subsection{Uniform cusp property and $\al$-sets} \label{sec:ucp}

	Let $\al \in (0,1]$.
	We say that an open set $U \subseteq \R^d$ has the \emph{uniform $\al$-cusp property}
	if for every $x \in \p U$ there exist $\ep>0$, a truncated open $\al$-cusp $\Ga = \Ga_d^{\al}(r,h)$, and an
	orthogonal linear map $A \in \on{O}(d)$ such that $y + A\Ga \subseteq U$
	for all $y \in \ol U \cap B(x,\ep)$.

Note that, by \eqref{eq:ballpolyhedron}, we can equivalently replace $\Ga_d^{\al}(r,h)$ by $\Pi_d^{\al}(P,h)$, where $P$ is a polyhedron $P \subseteq \R^{d-1}$ containing the origin.

	By an \emph{$\al$-set} we mean a closed fat set $X\subseteq \R^d$ such that $X^\o$ has the
	uniform $\al$-cusp property.
  Let $\sH^\al(\R^d)$ denote the collection of all $\al$-sets in $\R^d$.
	We say that $X \subseteq \R^d$ is a \emph{H\"older set} if it is an $\al$-set for some $\al \in (0,1]$
  and denote by $\sH(\R^d)$ the collection of all H\"older sets in $\R^d$.
  The elements of $\sH^1(\R^d)$ we also call \emph{Lipschitz sets} in $\R^d$.

Note that $\sH^1(\R^d) \subsetneq \sH^\al(\R^d) \subsetneq \sH^\be(\R^d) \subsetneq \sH(\R^d)$ if $1> \al > \be >0$
(since $\al> \be$ if and only if $\Ga_d^{\al}(r,h) \supsetneq \Ga_d^{\be}(r,h)$).

\begin{remark} \label{rem:Hoelderboundary}
A bounded open set $U \subseteq \R^d$ has the uniform $\al$-cusp property
if and only if $U$ has \emph{$\al$-H\"older boundary};
see \cite[Theorem 6.9, p. 116]{DelfourZolesio11} and \cite[Theorem 1.2.2.2]{Grisvard85}.
That means the following.
At each point $p\in \p U$ there is an orthogonal system of coordinates $(x',x_d)$ and an $\al$-H\"older function $a = a(x')$
such that in a neighborhood of $p$
the boundary of $U$ is given by $\{x_d = a(x')\}$ and the set $U$ is of the form $\{x_d > a(x')\}$.
\end{remark}

	The boundary of an $\al$-set with $\al<1$ can be quite irregular.
  The Hausdorff dimension $\dim_\cH X$ of a compact $X \in \sH^\al(\R^d)$ is not larger than $d-\al$,
  but there are examples $X \in \sH^\al(\R^d)$ with $\dim_\cH X = d-\al$.
  See \cite[Theorem 6.10, p.~116]{DelfourZolesio11}.

\begin{example} \label{ex:alphadomain}
	(1) The closure of a truncated open $\al$-cusp $\Ga^{\al}_d(r,h)$ or of $\Pi_d^{\al}(P,h)$ is an $\al$-set.

  (2) Let $C \subseteq [0,1]$ be the ternary Cantor set and let $f : [0,1] \to \R$ be defined by
	$f(x) := \on{dist}(x,C)^\al$. Then the set $X = \{(x,y) \in \R^2 : -1\le x \le 2,\,  f(x)\le y \le 2 \text{ if } x \in [0,1], \,
	0\le y \le 2 \text{ if } x \not\in [0,1]\}$ is an $\al$-set.

  (3) Convex closed fat sets $X \subseteq \R^d$ are $1$-sets.

	(4) The horn-like set $X = \{(x,y) \in \R^2 : x\ge 0, \,  x^2 \le y \le  2x^{2}\}$ is not a H\"older set,
	but $X$ is the image of the $\frac{1}{2}$-set $\{(x,y) \in \R^2 : x\ge 0, \,  |y| \le \frac{1}{2} x^{2}\}$ under the
	diffeomorphism $(x,y) \mapsto (x,y+\frac{3}{2}x^2)$ of $\R^2$.

	(5) The set $X = \{(x,y) \in \R^2 : x\ge 0, \, x^{3/2} \le y \le 2 x^{3/2}\}$ is not a H\"older set
	and there is no smooth diffeomorphism of $\R^2$ which maps $X$ to a H\"older set. But $X$ is subanalytic.
    The same is true if the exponent $3/2$ is replaced by any positive rational number $r$ that is neither an integer nor 
    the reciprocal of an integer (in which case we can argue as in (4)). 
    Indeed, let $r=p/q$ be such a rational number, where $p$ and $q$ are coprime positive integers.
    We may assume that $p>q$; otherwise we interchange the roles of $x$ and $y$.
    Suppose for contradiction that there is a $\cC^\infty$-diffeomorphism $f : \R^2 \to \R^2$ that 
    maps a H\"older set onto $X$. We may assume without loss of generality that $f(0) = 0$ and that
    $[0,\ep) \times \{0\}$, for some small $\ep>0$,
    is contained in that H\"older set. Then $f(t,0) =: (x(t),y(t))$ is a $\cC^\infty$-curve in $\R^2$ such that 
    $(x(t),y(t)) \in X$ for $t \in [0,\ep)$. 
    Let $n$ be the unique integer such that $n < p/q < n+1$.
    Then $y^{(k)}(0) = 0$ for all $0 \le k \le n$. In fact, $y(0) = 0$, since $f(0) =0$, 
    and if we already know that 
    $y^{(k)}(0) = 0$ for all $0 \le k \le \ell-1<n$, then $y(t) = \frac{1}{\ell !}y^{(\ell)}(s) t^{\ell}$ for some 
    $s \in (0,t)$, so that $(x(t),y(t)) \in X$ for $t \in [0,\ep)$ implies 
    \[
        \frac{x(t)^{\ell}}{t^{\ell}}x(t)^{p/q-\ell} \le \frac{y^{(\ell)}(s)}{\ell!}  \le 2 \frac{x(t)^\ell}{t^\ell}x(t)^{p/q-\ell}, \quad 0 \le t <\ep.
    \]
    Letting $t \to 0$ we may conclude that $y^{(\ell)}(0)= 0$ if $\ell\le n$, because $x(t)/t \to x'(0)$.
    On the other hand, for $\ell = n+1$ we get that $y^{(n+1)}(t)$ (thus also $\p_1^{n+1} f(t,0)$) is unbounded near $t =0$, 
    a contradiction. 

  (6) The flat cusp $X = \{(x,y) \in \R^2 : x\ge 0, \,  |y| \le e^{-1/x^2}\}$ is not a H\"older set.
\end{example}

\subsection{Some properties of H\"older sets} \label{ssec:propertiesHoeldersets}

Let $X \subseteq \R^d$.
The \emph{$c^\infty$-topology} on $X$ is the final topology with respect to all
$\cC^\infty$-curves $c : \R \to \R^d$ satisfying $c(\R) \subseteq X$.
The $c^\infty$-topology on $\R^d$ coincides with the usual topology; cf.\ \cite[Theorem 4.11]{KM97}.

\begin{proposition}[{\cite[Proposition 3.6]{Rainer18}}] \label{topology}
	The $c^\infty$-topology on each H\"older set $X \in  \sH(\R^d)$ coincides with the trace topology from $\R^d$.
\end{proposition}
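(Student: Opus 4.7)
The inclusion trace-open $\subseteq$ $c^\infty$-open is immediate, since each $c \in \cC^\infty(\R,X)$ is continuous into $\R^d$ and hence into $X$ with the trace topology; thus $c^{-1}(U)$ is open in $\R$ for every trace-open $U \subseteq X$. The plan therefore focuses on the reverse inclusion.

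Suppose $U \subseteq X$ is $c^\infty$-open and $x \in U$; the goal is to exhibit a trace-neighborhood of $x$ contained in $U$. If $x \in X^\o$, this follows from the classical coincidence of the two topologies on open subsets of $\R^d$, so we may take $x \in \p X$. Assume for contradiction a sequence $a_n \in X \setminus U$ with $a_n \to x$ in $\R^d$. The strategy is to build a curve $c \in \cC^\infty(\R,X)$ with $c(0) = x$ and $c(\ta_n) = a_n$ for some $\ta_n \searrow 0$ (after passing to a subsequence of the $a_n$); then $c^{-1}(U)$ is open in $\R$, contains $0$, and misses each $\ta_n$, the desired contradiction.

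After translating and rotating, we may assume $x = 0$ and, by the uniform $\al$-cusp property, that some truncated cusp $\Ga = \Ga_d^\al(r,h)$ satisfies $y + \Ga \subseteq X^\o$ for every $y \in X \cap B(0,\ep)$. Fix a smooth $\psi : \R \to [0,h/2]$ that is $\infty$-flat at $0$ and strictly positive for $t > 0$; the \emph{spine} $s(t) := \psi(t) e_d$ lies in $\Ga \subseteq X^\o$ for $t > 0$. After passing to a subsequence with $|a_n|$ decaying faster than any power of $1/n$, we choose disjoint shrinking intervals $I_n = [\ta_n - \de_n,\ta_n + \de_n]$ with $\ta_n \searrow 0$, and propose to define
\[
  c(t) =
  \begin{cases}
    0 & t \le 0, \\
    s(t) & t > 0,\ t \notin \bigcup_n I_n, \\
    a_n + \bigl(-u_n(t)\, a_n',\ u_n(t)^\al\, C_n\bigr) & t \in I_n,
  \end{cases}
\]
where $u_n : I_n \to [0,1]$ is smooth, vanishes to infinite order at $\ta_n$, equals $1$ at the endpoints, and $C_n$ is chosen so that at $\ta_n \pm \de_n$ the $I_n$-piece of $c$ coincides to infinite order with the spine $s$. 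The cusp inequality $h(u_n|a_n'|/r)^\al < u_n^\al C_n$ then reduces to $h(|a_n'|/r)^\al < C_n$, which holds for large $n$, so $c(t) - a_n \in \Ga$ and consequently $c(t) \in X$ on each $I_n$. By taking $\de_n$ small enough relative to $|a_n|$, the bump estimates give smoothness and infinite flatness of $c$ at $0$.

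The delicate point---and the main obstacle---is that for $\al < 1$ a straight-line approach to $a_n$ leaves the cusp $a_n + \Ga$ in every neighborhood of $a_n$: the cusp forces the vertical deviation from $a_n$ to dominate the $\al$-th power of the horizontal deviation. This is why the vertical component of the local piece above has exponent $\al$ rather than $1$, and why $u_n$ must be infinitely flat at $\ta_n$ so that $u_n^\al$ remains smooth. The remaining details---smoothness of $u_n^\al$, infinite-order matching with the spine at the endpoints, and sufficiently rapid decay of $|a_n|$---are routine bump-function bookkeeping.
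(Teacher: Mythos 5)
Your strategy is the right one and is essentially that of the cited proof ([Rainer18, Proposition~3.6]), which this paper does not reproduce: the nontrivial inclusion reduces to producing, for a sequence $a_n\to x$ in $X\setminus U$, a single curve $c\in\cC^\infty(\R,X)$ with $c(0)=x$ and $c(\ta_n)=a_n$ for some $\ta_n\searrow 0$ after passing to a fast-falling subsequence; the uniform $\al$-cusp property keeps the connecting pieces inside $X$, and a general-curve-lemma gluing (as in Step~0 of Section~4) assembles them. Your verification that the excursion $u\mapsto a_n+(-u\,a_n',\,u^\al C_n)$ lies in $a_n+\Ga\subseteq X^\o$ for $u\in(0,1]$ is correct, and the contradiction at the end is sound.

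Two points need repair, though neither is fatal. First, the order of your choices is wrong: you fix the flat function $\ps$ \emph{before} choosing the subsequence, but the cusp inequality you need, $h(|a_n'|/r)^\al<C_n$ with $C_n\approx\ps(\ta_n)-a_{n,d}$ forced by the endpoint matching, requires $\ps(\ta_n)\gtrsim |a_n|^\al$; an arbitrary pre-chosen flat $\ps$ (say $\ps(t)=e^{-1/t^2}$ with $\ta_n\sim 1/n$ and $|a_n|\sim 4^{-n}$) violates this, so the assertion that the inequality ``holds for large $n$'' is unjustified as written. The fix is to choose the subsequence and the $\ta_n,\de_n$ first and then build $\ps$ with prescribed values $\ps(\ta_n)$ in the window between $a_{n,d}+h(|a_n'|/r)^\al$ and $o(\ta_n^N)$ for all $N$ (nonempty precisely because of the fast convergence), taking $\ps$ locally constant near $\p I_n$ so that a single constant $C_n$ matches both endpoints. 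Second, smoothness of $u_n^\al$ for non-integer $\al$ is not automatic from ``$u_n$ smooth, nonnegative, infinitely flat at $\ta_n$'': derivatives of $u_n^\al$ contain negative powers of $u_n$, which flatness alone does not control. You need a concrete profile, e.g.\ $u_n=e^{-\ph_n}$ with $\ph_n$ smooth, so that $u_n^\al=e^{-\al\ph_n}$ is manifestly smooth and flat. With these adjustments the argument goes through.
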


Recall that a set $X \subseteq \R^d$ is called \emph{$p$-regular}, where $p \in \R_{\ge 1}$,
if each $x \in X$ has a compact neighborhood $K$ in $X$ and there is a constant $D>0$
such that any two points $y_1,y_2 \in K$ can be joined by a rectifiable path $\ga$ contained in $K$ of length
\begin{equation*}
  \ell(\ga) \le D |y_1-y_2|^{1/p}.
\end{equation*}

\begin{proposition}[{\cite[Proposition 3.8]{Rainer18}}] \label{prop:alpharegular}
	Each $X\in \sH^\al (\R^d)$ is $\frac{1}{\al}$-regular.
\end{proposition}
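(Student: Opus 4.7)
The plan is to split into cases according to whether the base point $x$ lies in $X^\circ$ or on $\p X$. If $x \in X^\circ$, then since $X^\circ$ is open I can take the compact neighborhood $K$ to be a small closed Euclidean ball around $x$ contained in $X^\circ$; two points of $K$ are then joined by the straight segment, which has length $|y_1 - y_2| \le \on{diam}(K)^{1-\al} |y_1 - y_2|^\al$, so the desired estimate holds trivially (even with $p=1$). All the content is at boundary points.

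If $x \in \p X$, my approach is to invoke \Cref{rem:Hoelderboundary}: after an orthogonal change of coordinates, $X$ near $x$ has the form $\{(x',x_d) : x_d \ge a(x')\}$ for an $\al$-H\"older function $a$ with some H\"older constant $L$. I take $K$ to be the graph over a small closed cube around $x'$, intersected with a compact strip $\{a(x') \le x_d \le a(x') + H\}$ for a fixed $H>0$. Given $y_i = (y_i',y_{i,d}) \in K$ for $i=1,2$, I construct a three-leg path: go vertically up from $y_1$ to $y_1 + t e_d$, then along the straight segment to $y_2 + t e_d$, and finally down to $y_2$, where the plateau height $t>0$ is to be chosen.

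The key check is that the middle segment $\{(1-s)y_1 + s y_2 + t e_d : s \in [0,1]\}$ lies in $X$. Its last coordinate equals $(1-s)y_{1,d} + s y_{2,d} + t$, and I need this to be at least $a((1-s)y_1' + s y_2')$. Using $y_{i,d} \ge a(y_i')$ and the $\al$-H\"older property of $a$,
\[
a((1-s)y_1' + sy_2') \le \min(a(y_1'),a(y_2')) + L |y_2' - y_1'|^\al \le \min(y_{1,d},y_{2,d}) + L|y_2-y_1|^\al,
\]
so any choice $t \ge L |y_2 - y_1|^\al$ suffices. Taking $t := L|y_2-y_1|^\al$, the total length of the path is at most $2t + |y_1 - y_2| \le 2L|y_1-y_2|^\al + \on{diam}(K)^{1-\al} |y_1-y_2|^\al$, which is $D |y_1 - y_2|^{1/p}$ with $p = 1/\al$.

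The main obstacle I anticipate is bookkeeping rather than anything conceptual: I must shrink $K$ enough that the entire three-leg path remains inside the chart where the H\"older graph representation is valid (not just inside $X$). Since the vertical excursion is bounded by $t \le L \on{diam}(K)^\al$, choosing $\on{diam}(K)$ small (and taking the vertical extent $H$ of the strip comparable to $L\on{diam}(K)^\al$) ensures containment. The extra symmetry assumption that the segment from $y_1$ to $y_1 + te_d$ (and similarly for $y_2$) lies in $X$ is automatic since $X$ is an epigraph in these local coordinates. Thus the construction produces a rectifiable path in $K$ with the required $\frac{1}{\al}$-regular length bound, proving the proposition.
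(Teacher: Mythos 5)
Your overall strategy --- reduce to the local H\"older-graph representation of \Cref{rem:Hoelderboundary} and join $y_1,y_2$ by a three-leg path whose vertical legs have length $\sim L|y_1-y_2|^\al$ --- is the natural one and is in the spirit of the cited proof in \cite[Proposition 3.8]{Rainer18} (which works directly with the translated cusps $y_i+A\Ga$ rather than the graph, but lifts the two points into the set by the same amount $\sim|y_1-y_2|^\al$ before connecting them by a segment). The key computation is correct: $a((1-s)y_1'+sy_2')\le\min_i a(y_i')+L|y_1'-y_2'|^\al$ shows that a lift by $t=L|y_1-y_2|^\al$ puts the middle segment above the graph, and $|y_1-y_2|\le\on{diam}(K)^{1-\al}|y_1-y_2|^\al$ absorbs the horizontal contribution, giving $\ell(\ga)\le D|y_1-y_2|^{1/p}$ with $p=1/\al$.

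There is, however, a genuine gap in the containment step, and your proposed remedy does not close it. The definition of $p$-regularity in the paper requires the path to lie in $K$ itself, and with your choice $K=\{x'\in Q,\ a(x')\le x_d\le a(x')+H\}$ this fails for two reasons. First, if $y_1$ sits at (or near) the top of the strip, $y_{1,d}=a(y_1')+H$, then $y_1+te_d$ has height $a(y_1')+H+t>a(y_1')+H$ and leaves $K$ no matter how $H$ and $\on{diam}(K)$ are related; shrinking $K$ or taking $H\sim L\on{diam}(K)^\al$ does not help, because the obstruction occurs exactly at the top boundary. Second, the upper boundary $\{x_d=a(x')+H\}$ is itself only $\al$-H\"older, so even the horizontal segment between two lifted points can poke out through it (the same oscillation of $a$ that forces you to lift in the first place). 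Both problems disappear with a small modification: take $K$ with a \emph{flat} top, $K=\{x'\in Q,\ a(x')\le x_d\le c\}$ with $c\ge\sup_Q a+H$, and truncate the lifts at level $c$, i.e.\ replace $y_i+te_d$ by $(y_i',\min(y_{i,d}+t,c))$. The middle segment then has height at least $\min(\min_i y_{i,d}+t,\,c)$, which dominates $a$ along the segment (using both $a\le\min_i y_{i,d}+L|y_1'-y_2'|^\al$ and $a\le\sup_Q a<c$), stays below $c$ by convexity, and has length at most $|y_1-y_2|$ since $z\mapsto\min(z+t,c)$ is $1$-Lipschitz; the vertical legs still have length at most $t$. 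With this adjustment your argument is complete. (A minor further point: \Cref{rem:Hoelderboundary} is stated for bounded open sets, so one should localize before invoking the graph representation; since regularity is a local property this is harmless.)
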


\begin{proposition}[{\cite[Proposition 3.9]{Rainer18}}] \label{alpha-simple}
	Each $X \in \sH(\R^d)$ is simple.
\end{proposition}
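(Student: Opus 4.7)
The case of an interior point $x \in X^\o$ is trivial and I would dispose of it first: for sufficiently small $\de>0$ one has $B(x,\de) \subseteq X^\o$, so $\{B(x,\de)\}_{\de>0}$ is a neighborhood basis of $x$ in $X$ with $B(x,\de) \cap X^\o = B(x,\de)$ connected. Henceforth assume $x \in \p X$. After translating $x$ to the origin and performing an orthogonal change of coordinates, \Cref{rem:Hoelderboundary} supplies a neighborhood $W$ of $0$ and an $\al$-H\"older function $a$ with $a(0)=0$ and H\"older constant $C$ such that
\[
  X \cap W = \{(x',x_d) \in W : x_d \ge a(x')\}, \qquad X^\o \cap W = \{(x',x_d) \in W : x_d > a(x')\}.
\]

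My plan is to use \emph{anisotropic} neighborhoods tailored to the $\al$-cuspidal geometry rather than Euclidean balls. Fix $M > C$; for small $\de>0$ I would set
\[
  V_\de := \{(x',x_d) \in \R^d : |x'|<\de,\; -\de < x_d < M\de^\al\}, \qquad U_\de := X \cap V_\de.
\]
Since $\on{diam}(V_\de) \to 0$ as $\de \to 0$, the family $\{U_\de\}$ is a neighborhood basis of $0$ in $X$. To show $U_\de \cap X^\o = V_\de \cap X^\o$ is path-connected I would fix $\eta \in (0,(M-C)\de^\al)$ and consider the horizontal slice $S_\de := \{(x',M\de^\al-\eta) : |x'|<\de\}$. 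For $|x'|<\de$ one has $a(x') \le C|x'|^\al \le C\de^\al < M\de^\al-\eta$, so $S_\de \subseteq V_\de \cap X^\o$; and $S_\de$ is clearly path-connected as a Euclidean ball in $\R^{d-1}$. Given any $y=(y',y_d) \in V_\de \cap X^\o$, the vertical segment $t \mapsto (y',(1-t)y_d + t(M\de^\al-\eta))$, $t \in [0,1]$, stays inside $V_\de$, and both endpoint $x_d$-values exceed $a(y')$, so by convex combination the entire segment lies in $\{x_d > a(y')\}$, hence in $X^\o$. Thus every point of $V_\de \cap X^\o$ is joined within $U_\de \cap X^\o$ to the connected set $S_\de$, which gives the claim.

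The main obstacle is identifying why Euclidean balls do \emph{not} work when $\al<1$: the graph of $a$ can rise to height $\sim C\de^\al$ on the horizontal disk $\{|x'|<\de\}$, and for $\al<1$ this dwarfs $\de$, so a ball $B(0,\de)$ need not contain any horizontal slice lying above the graph of $a$ over the full horizontal disk. Matching the vertical scale of the neighborhoods to $\de^\al$ is precisely what accommodates the $\al$-cuspidality of $\p X$ and makes the connectivity argument go through uniformly in $\al \in (0,1]$.
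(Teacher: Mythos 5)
Your argument is correct, and it also correctly isolates the one genuine subtlety: for $\al<1$ the trace of a Euclidean ball on $X^\o$ need not be connected (the graph of $a$ can rise to height $\sim\de^\al\gg\de$ inside the horizontal disk of radius $\de$), so one must use neighborhoods whose vertical extent is matched to $\de^\al$. The paper itself gives no proof here --- the proposition is imported verbatim from \cite[Proposition 3.9]{Rainer18} --- so there is nothing to compare line by line; your route, via the local $\al$-H\"older graph representation of \Cref{rem:Hoelderboundary} plus anisotropic boxes $V_\de$ and the vertical-segment-to-a-high-horizontal-slice connection, is a clean self-contained substitute. Two small points you should make explicit: (i) \Cref{rem:Hoelderboundary} is stated for \emph{bounded} open sets, but the implication you need (uniform $\al$-cusp property at a boundary point $\Rightarrow$ local graph representation near that point) is purely local and follows from the cusp condition on $\ol{X^\o}\cap B(x,\ep)$ alone, so you should either invoke the local version or restrict to a bounded piece of $X^\o$ first; (ii) you should record that $V_\de$ is eventually contained in the neighborhood $W$ on which the graph representation $X^\o\cap W=\{x_d>a(x')\}\cap W$ is valid, which is what legitimizes the identification $U_\de\cap X^\o=\{(x',x_d)\in V_\de: x_d>a(x')\}$ used in the connectivity step. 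With these remarks the proof is complete.
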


\section{Continuous extension of derivatives to the boundary of $\al$-sets}
\label{sec:Cn}

We start with the proof of \Cref{main:A}.
Let $X \subseteq \R^d$ be an $\al$-set and $Y$ any subset of $X$ (possibly the empty set).
In this section, we will show that
\begin{equation*}
   \cA^{np(\al),\om}_Y(X) \subseteq \cC^{n}_Y(X),
\end{equation*}
for each $n \in \N_{\ge 1}$ and each modulus of continuity $\om$.
Then it is easy to complete the proof of \Cref{main:Aa} in \Cref{sec:proofmain:Aa}.

\subsection{Computing derivatives}

We recall a simple formula for the derivatives of composite functions which will be useful below.
We will use the abbreviation $f^{(j)}(x)(v^j) = f^{(j)}(x)(v,\ldots,v)$.

\begin{lemma} \label{lem:Faa}
	Let $1 \le b\le a$ be integers.
	Fix $x \in \R^d$ and $v = (v',v_d) \in \R^d$
	and consider $c(t) = x + (t^{a} v', t^{b}v_d)$, for $t$ in a neighborhood of $0 \in \R$. Let
	$f$ be of class $\cC^{a}$ in a neighborhood of the image of $c$.
	Then:
	\begin{align*}
		\frac{1}{k!} (f\o c)^{(k)}(0) =
			\begin{cases}
				\frac{1}{j!} f^{(j)}(x)((0,v_d)^j) & \text{ if } k= jb < a,
				\\
				f'(x)((v',0)) & \text{ if } k =a \not\in b\N,
				\\
				f'(x)((v',0)) + \frac{1}{j!} f^{(j)}(x)((0,v_d)^j) & \text{ if } k = jb=a.
			\end{cases}
	\end{align*}
	For all other $k<a$ we have $(f\o c)^{(k)}(0)=0$.
\end{lemma}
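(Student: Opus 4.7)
The plan is to carry out a direct Taylor expansion of $f$ around $x$ and then substitute the explicit form of $c$. Since $f$ is $\cC^a$ on a neighborhood of $x = c(0)$, I would write
\[
  f(x+u) = \sum_{|\al| \le a} \frac{1}{\al!}\, \p^\al f(x)\, u^\al + R(u), \qquad R(u) = o(|u|^a) \text{ as } u \to 0,
\]
and then substitute $u = c(t)-x = (t^{a}v', t^{b}v_d)$. This turns each monomial into $u^\al = (v')^{\al'} v_d^{\al_d}\, t^{a|\al'|+b\al_d}$, so the polynomial part of $f \o c(t)$ is a finite sum whose coefficient of $t^k$ is already the answer we want, up to dealing with the remainder.

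Next I would dispose of the remainder: because $1\le b \le a$, one has $|c(t)-x| = O(t^b)$, hence $R(c(t)-x) = o(|c(t)-x|^a) = o(t^{ab}) = o(t^a)$. Consequently $R(c(t)-x)$ does not contribute to any Taylor coefficient of $f \o c$ at $0$ of order $\le a$, and $\tfrac{1}{k!}(f\o c)^{(k)}(0)$ is precisely the coefficient of $t^k$ in the polynomial part.

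The remaining step is purely combinatorial: I would enumerate the multi-indices $\al = (\al',\al_d)$ with $|\al| \le a$ and $a|\al'| + b\al_d = k$. Since $k \le a$ and $a \ge b \ge 1$, this constraint forces $|\al'| \in \{0,1\}$. The case $|\al'| = 0$ contributes only when $b \mid k$, giving $\tfrac{1}{j!} f^{(j)}(x)((0,v_d)^j)$ with $j = k/b$; the case $|\al'| = 1$ forces $k=a$ and $\al_d = 0$, giving $f'(x)((v',0))$. Reading off when one or both or neither case occurs yields exactly the four alternatives in the statement.

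I do not anticipate a genuine obstacle; the only point requiring any care is justifying that the Taylor remainder of $f$ can be dropped, which follows immediately from the estimate $|c(t)-x| = O(t^b)$ together with $b \ge 1$. The rest is bookkeeping on the single inequality $a|\al'| + b\al_d \le a$.
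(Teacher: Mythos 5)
Your proof is correct and is essentially the computation the paper intends: the paper's own (very terse) proof reduces to monomial curves $\gamma(t)=x+t^ry$ and leaves the combination of the two components as an easy exercise, which is exactly the Taylor-expansion-plus-multi-index bookkeeping you carry out explicitly, with the key observation in both cases being that $a|\alpha'|+b\alpha_d\le a$ forces $|\alpha'|\le 1$. The one point requiring care --- that the Peano remainder satisfies $R(c(t)-x)=o(|c(t)-x|^{a})=o(t^{ab})=o(t^{a})$ and hence cannot affect the Taylor coefficients of $f\circ c$ of order $\le a$ --- you handle correctly.
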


\begin{proof}
   If $y \in \R^d$ and $\ga(t)= x + t^r y$, then
   \[
    \frac{1}{(rj)!} (f \o \ga)^{(rj)}(0) = \frac{1}{j!} f^{(j)}(x)(y^j)
   \]
   and $(f \o \ga)^{(k)}(0) = 0$ if $k \not\in r\N$.
   From this the lemma follows easily.
\end{proof}

\subsection{The integer $p(\al)$}

Let $\al \in (0,1]$. We define (cf.\ \eqref{eq:defpq})
\begin{align*}
      p(\al) &:= \left\lceil\frac{2}{\al} \right\rceil
      =
      \begin{cases}
          2 & \text{ if } \al = 1,
          \\
          p & \text{ if } \al \in \big[\tfrac{2}{p},\tfrac{2}{p-1}\big), ~ p \in \N_{\ge 3}.
      \end{cases}
\end{align*}

The expedience of $p(\al)$ and its optimality is expressed in next lemma.

\begin{lemma} \label{lem:ploss}
  Let $(v',v_d) \in \Ga^\al_d(r,h)$. Then:
  \begin{enumerate}
    \item $(t^{p(\al)} v',t^2 v_d) \in \Ga^\al_d(r,h)$ whenever $0<|t|\le 1$.
    \item Among all pairs of positive integers $(a,b)$ with $(t^a v',t^b v_d) \in \Ga^\al_d(r,h)$ for arbitrary $(v',v_d) \in \Ga^\al_d(r,h)$ whenever $0<|t|\le 1$
    the pair $(p(\al),2)$ is \emph{minimal} in the sense that
    \[
      p(\al) = \max \{p(\al),2\} \le \max\{a,b\} = a.
    \]
  \end{enumerate}
\end{lemma}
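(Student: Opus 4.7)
My plan is to verify both assertions by directly unpacking the defining inequalities of $\Gamma_d^{\alpha}(r,h)$ and exploiting that $|t|\le 1$ reverses the ordering of powers.

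For part (1), I would fix $(v',v_d)\in\Gamma_d^{\alpha}(r,h)$ and $t$ with $0<|t|\le 1$ and check the three defining conditions of $\Gamma_d^{\alpha}(r,h)$ in turn for the point $(t^{p(\alpha)}v', t^2 v_d)$. The bound $|t^{p(\alpha)}v'|<r$ and the bound $t^2 v_d<h$ are immediate from $|t|\le 1$. The cusp condition $h(|t^{p(\alpha)}v'|/r)^{\alpha}<t^2v_d$ reduces, after using $v_d>h(|v'|/r)^{\alpha}$, to the scalar inequality $|t|^{p(\alpha)\alpha}\le |t|^{2}$, which holds for $|t|\le 1$ precisely because $p(\alpha)\alpha \ge 2$ by the definition $p(\alpha)=\lceil 2/\alpha\rceil$. (Note $v_d>0$ since $(v',v_d)\in\Gamma$, so $t^2v_d>0$ poses no issue.)

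For part (2), I would argue that any admissible pair $(a,b)$ must satisfy $a\ge p(\alpha)$, and that $a\ge b$; together these give $\max\{a,b\}=a\ge p(\alpha)=\max\{p(\alpha),2\}$, where the last equality uses $p(\alpha)\ge 2$ since $\alpha\le 1$. First, since $v_d>0$, the requirement $t^b v_d>0$ for \emph{every} $t\in [-1,0)\cup(0,1]$ forces $b$ to be even, hence $b\ge 2$. Second, plugging $(t^av', t^b v_d)$ into the cusp inequality and letting $v_d\downarrow h(|v'|/r)^{\alpha}$ yields the necessary scalar condition $|t|^{b}\ge |t|^{a\alpha}$ for all $0<|t|<1$, which forces $b\le a\alpha$. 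Combining with $b\ge 2$ gives $a\ge 2/\alpha$, and since $a\in\N$ this upgrades to $a\ge\lceil 2/\alpha\rceil=p(\alpha)$. The inequality $a\ge b$ then follows from $b\le a\alpha\le a$.

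No step looks difficult; the only point that needs a moment of care is the requirement that $b$ be even (so $b\ge 2$), without which a trivial pair like $(a,1)$ would formally seem smaller. Once that is observed, both claims are one-line manipulations of $|t|^{p(\alpha)\alpha}$ versus $|t|^{2}$ on $(0,1]$.
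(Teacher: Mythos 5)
Your proof is correct and follows essentially the same route as the paper: both reduce membership of $(t^a v', t^b v_d)$ in $\Ga^\al_d(r,h)$ to the scalar condition $|t|^{a\al}\le t^b$ on $0<|t|\le 1$, hence to $b$ being even and $a\al\ge b$, from which (1) follows by taking $b=2$ and (2) follows from $a\ge b/\al\ge 2/\al$ and integrality of $a$. Your explicit observation that evenness of $b$ is forced by negative $t$ is exactly the point the paper's equivalence encodes.
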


\begin{proof}
  That $(t^a v',t^b v_d) \in \Ga^\al_d(r,h)$ for arbitrary $(v',v_d) \in \Ga^\al_d(r,h)$ whenever $0<|t|\le 1$ is equivalent to
  \[
    |t|^{a \al} \le t^{b} \quad \text{ for all } 0<|t|\le 1.
  \]
  This is in turn equivalent to
  \begin{equation} \label{eq:abal}
    b \in 2 \N_{\ge 1} \quad \text{ and } \quad a \al \ge b.
  \end{equation}
  Taking $b = 2$ we may infer $(1)$.
  Moreover, \eqref{eq:abal} implies $a \ge \frac{b}{\al} \ge \frac{2}{\al}$ and thus $a \ge p(\al)$, that is (2).
\end{proof}

\subsection{Derivatives of arc-differentiable functions}
We first show that arc-differentiable functions have arc-differentiable derivatives and minimize the involved loss of regularity.

\begin{proposition}  \label{Krieglcusp}
  Let $\al \in (0,1]$, $k \ge p(\al)$ an integer, and $\om$ a modulus of continuity.
	Let $X \in  \sH^\al(\R^d)$
	and $f \in \cA^{k,\om}(X)$. Then $f|_{X^\o}$ is of class $\cC^{k,\om}$
	and its derivative $(f|_{X^\o})'$ extends uniquely to a mapping
	$f' : X \to L(\R^d,\R)$ which belongs to $\cA^{k - p(\al), \om}(X, L(\R^d,\R))$, i.e.,
	\begin{equation*}
		(f')_* \cC^\infty(\R,X) \subseteq \cC^{k - p(\al), \om}(\R,L(\R^d,\R)).
	\end{equation*}
  If $Z$ is any subset of $X$ and $f \in \cA^{k,\om}_Z(X)$, then $f' \in \cA^{k - p(\al), \om}_Z(X, L(\R^d,\R))$.
\end{proposition}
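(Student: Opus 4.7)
The plan is to combine Boman--Faure on the interior with a cusp-curve extraction at boundary points. For the interior, because $X^\o$ is open, $f|_{X^\o} \in \cA^{k,\om}(X^\o) = \cC^{k,\om}(X^\o)$ by \cite{Boman67, Faure89}, so the classical derivative $(f|_{X^\o})'$ is already at hand. To extend it, fix $y \in X$ close to a boundary point $x_0$: the uniform $\al$-cusp property supplies a rotation $A \in \on{O}(d)$ and a truncated cusp $\Ga = \Ga_d^\al(r,h)$ with $y + A\Ga \subseteq X^\o$, and \Cref{lem:ploss}(1) guarantees that for any $v = (v',v_d) \in \Ga$ the polynomial path $s \mapsto y + A(s^{p(\al)}v',\, s^2 v_d)$ stays in $X$ for $|s| \le 1$. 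Composing with a smooth reparametrization $\phi \colon \R \to [-1,1]$ equal to the identity near $0$ gives a smooth arc $c_{y,v} \in \cC^\infty(\R, X)$ with the same $k$-jet at $0$ as the polynomial, and hence $f \o c_{y,v} \in \cC^{k,\om}(\R)$ by arc-regularity.

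\Cref{lem:Faa} with $a = p(\al)$, $b = 2$ applied to $c_{y,v}$ exhibits the numbers $(f \o c_{y,v})^{(j)}(0)/j!$ for $j \le p(\al)$ as an upper-triangular linear system in the unknowns $f^{(i)}(y)((0,v_d)^i)$ (for $2i \le p(\al)$) and $f'(y)((v',0))$. Solving this system (and rescaling in $v$, using linearity of the target) determines a unique candidate linear functional $f'(y) \in L(\R^d,\R)$ which agrees with the classical derivative on $X^\o$ and hence extends it to all of $X$. For the continuity of $y \mapsto f'(y)$, the polynomial path depends smoothly on $y$; testing arc-smoothness of $f$ along a smooth family of such cusp arcs yields joint continuity of $(f \o c_{y,v})^{(j)}(0)$ in $y$, which forces the triangular solution to depend continuously on $y$ and thereby gives uniqueness of the extension.

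For the arc regularity of $f'$, fix a smooth arc $\tilde c \colon \R \to X$ and $v \in \R^d$. Locally near any $t_0$ the uniform cusp property permits a single rotation $A$, so I form the plot $\Phi(t,s) := \tilde c(t) + A(s^{p(\al)}v',\, s^2 v_d)$, whose image lies in $X$ for $|s| \le 1$. A cuspidal two-parameter version of Boman's theorem (promoting arc- to plot-smoothness across the boundary of an $\al$-set) gives $f \o \Phi \in \cC^{k,\om}$ jointly in $(t,s)$. The same triangular inversion as above, now applied to $s \mapsto \p_s^j(f \o \Phi)(t,0)$, writes $t \mapsto f'(\tilde c(t))(v)$ as a linear combination of functions that are at least $\cC^{k-p(\al),\om}$ in $t$ — the bottleneck being $j = p(\al)$ — which is exactly the arc-$\cC^{k-p(\al),\om}$ claim for $f'$. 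The flat case follows from the same computation: if $(f \o c)^{(j)}(0) = 0$ for every $c$ with $c(0) \in Z$ and every $j \le k$, the triangular inversion propagates to vanishing of $f'(y)$ (and the intermediate $f^{(i)}(y)$) on $Z$, and the analogous computation along $\tilde c$ passing through $Z$ gives the arc-flat conclusion.

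I expect the chief obstacle to be the promotion of arc-smoothness to joint smoothness of $f \o \Phi$ across $\p X$: this is the cuspidal plot-version of Boman's theorem, and the uniform $\al$-cusp geometry is precisely what makes the polynomial reparametrization stay inside $X$ while losing only $p(\al)$ derivatives, in exact agreement with the choice $p(\al) = \lceil 2/\al \rceil$ dictated by \Cref{lem:ploss}.
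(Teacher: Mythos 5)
Your strategy coincides with the paper's: extract $f'(y)(v)$ from finitely many $t$-derivatives of $t\mapsto f(y+(t^{p(\al)}v',t^2v_d))$ via \Cref{lem:Faa} and \Cref{lem:ploss}, and obtain the regularity in the base point from two-parameter plots $(s,t)\mapsto c_{x(s),v}(t)$ together with Boman--Faure. Two remarks. First, the step you flag as the chief obstacle is not one: the plot $\Phi$ is defined on an \emph{open} strip in $\R^2$ and takes values in $X$, so every smooth curve into the strip composes with $\Phi$ to an element of $\cC^\infty(\R,X)$; hence $f\o\Phi$ has all its composites with smooth curves of class $\cC^{k,\om}$ and is therefore itself $\cC^{k,\om}$ by the classical Boman--Faure theorem on the open parameter domain. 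No ``cuspidal plot version'' across $\p X$ is needed, and this is exactly how the paper argues. Second, there is a genuine (if small) gap in your triangular inversion when $p(\al)=2q$ with $q\ge 2$: by \Cref{lem:Faa} the order-$p(\al)$ Taylor coefficient of $f\o c_{y,v}$ equals $f'(y)((v',0))+\frac1{q!}f^{(q)}(y)((0,v_d)^q)$, and the second summand occurs at no lower order in the expansion of the \emph{same} composite (the lower even orders only produce $f^{(j)}(y)((0,v_d)^j)$ for $j<q$), so the system coming from $c_{y,v}$ alone is underdetermined and cannot be solved for $f'(y)((v',0))$. You must also test along the vertical curve $t\mapsto y+t^2(0,v_d)$, whose order-$2q$ coefficient supplies exactly the missing term; this is the role of the auxiliary curve $\ell_{x,w}$ and the case distinction in the paper's formula \eqref{eq:deffprimev}. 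With that correction, and with the passage from $v\in\Ga$ to arbitrary $v$ done by writing $\ep v=(\ep v+\xi)-\xi$ for suitable $\xi$ with $\xi,\ep v+\xi\in\Ga$ (linearity of $f'$ at boundary points then being inherited as a pointwise limit of linear maps), your argument matches the paper's proof, including the treatment of the flat case via vanishing of the two-parameter composites to order $k$ at the origin.
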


\begin{proof}
    That $f|_{X^\o}$ is of class $\cC^{k,\om}$ is well-known; see \cite[Theorem 2]{Boman67} and \cite[Th\'eor\`eme 1]{Faure89}.

  Let us first show that an extension $f' \in \cA^{k - p(\al), \om}(X, L(\R^d,\R))$ of $(f|_{X^\o})'$ is unique:
  Suppose that $f'^{_2} \in \cA^{k - p(\al), \om}(X, L(\R^d,\R))$ is another extension.
  Let $x_0 \in \p X$ and take a $\cC^\infty$-curve $\R \ni s \mapsto x(s)$ in $X$ such that $x(s) \in X^\o$ for $0<|s| \le 1$ and $x(0)= x_0$.
  Then
  \begin{align*}
     f'(x_0) = \lim_{s \to 0} f'(x(s)) = \lim_{s \to 0} f'^{_2}(x(s)) = f'^{_2}(x_0),
  \end{align*}
  since both $f' \o x$ and $f'^{_2} \o x$ are continuous.

  For the existence we observe that, since $X$ is an $\al$-set and the statement is local and invariant under an orthogonal change of coordinates,
  it suffices to show that $(f|_{Y \cap X^\o})'$ extends uniquely to a mapping
	$f' : Y \to L(\R^d,\R)$ which belongs to $\cA^{k - p(\al), \om}(Y, L(\R^d,\R))$,
  where $Y$ is an open subset of $X$ with the following property:
  There is a truncated open $\al$-cusp $\Ga = \Ga_d^{\al}(r,h)$
	such that for all $y \in Y$ we have $y + \Ga \subseteq X^\o$.

	Let $p= p(\al)$, $x \in Y$, and $v = (v',v_d) \in  \Ga$. Set $u = (v',0)$ and $w = (0,v_d)$
  and note that $w \in \Ga$.
	The curves
	\[
		c_{x,v}(t) :=   x+ (t^{p} v',t^{2} v_d) \quad \text{ and } \quad \ell_{x,w} = x + t^2 w
	\]
	both lie in $X^\o$ for $0<|t|<1$ and
	$c_{x,v}(0) = \ell_{x,w}(0) = x$; see \Cref{lem:ploss}.
  Note that $c_{x,v}(t) = \ell_{x,w}(t) + t^p u$.
	Since $f \in \cA^{k,\om}(X)$, the composites $f \o c_{x,v}$ and $f \o \ell_{x,w}$ are of class $\cC^{k,\om}$.

  We will define $f'(x)$ on points $x \in (\p X) \cap Y$ in two steps.

  \textbf{Step 1.}
	Let $v \in \Ga$ be fixed. For $x \in Y$ set
  \begin{align} \label{eq:deffprimev}
     f'(x)(v) :=
     \begin{cases}
        \frac{1}{p!}(f \o c_{x,v})^{(p)}(0) + \frac{1}{2} (f \o \ell_{x,w})^{(2)}(0) & \text{ if } p \not\in 2 \N,
        \\
        \frac{1}{p!}(f \o c_{x,v})^{(p)}(0) - \frac{1}{p!}(f \o \ell_{x,w})^{(p)}(0) + \frac{1}{2} (f \o \ell_{x,w})^{(2)}(0)
        & \text{ if } p \in 2 \N.
     \end{cases}
  \end{align}
	This definition becomes a correct statement if $x \in X^\o$, by \Cref{lem:Faa}. Indeed,
  $\frac{1}{j!} f^{(j)}(x)(w^j) = \frac{1}{(2j)!} (f \o \ell_{x,w})^{(2j)}(0)$ and
  $f'(x)(u) = \frac{1}{p!}(f \o c_{x,v})^{(p)}(0)$, if $p \not\in 2 \N$.
  Otherwise, $p = 2q$ and so
  \begin{align*}
      f'(x)(u) &= \frac{1}{p!}(f \o c_{x,v})^{(p)}(0) - \frac{1}{q!} f^{(q)}(x)(w^{q}) 
      \\
               &= \frac{1}{p!}(f \o c_{x,v})^{(p)}(0) - \frac{1}{p!} (f \o \ell_{x,w})^{(p)}(0).
  \end{align*}

	We claim that
	\begin{equation} \label{smooth}
			\text{$f'(\cdot)(v) :  Y \to \R$ maps $\cC^\infty$-curves to $\cC^{k-p,\om}$-curves.}
	\end{equation}
	Let $\R \ni s \mapsto x(s)$ be a $\cC^\infty$-curve in $Y$.
	Then
	$(s,t) \mapsto c_{x(s),v}(t)$ and $(s,t) \mapsto \ell_{x(s),w}(t)$ are $\cC^\infty$-mappings
  defined on the open strip $\{(s,t) \in \R^2 : |t| < 1\}$ with values in $X$.
  Thus the composites $(s,t) \mapsto f(c_{x(s),v}(t))$ and $(s,t) \mapsto f(\ell_{x(s),w}(t))$ are of class $\cC^{k,\om}$, by \cite[Theorem 2]{Boman67} and \cite[Th\'eor\`eme 1]{Faure89}.
  So, in particular,
	$s \mapsto \p_t^j|_{t=0} f(c_{x(s),v}(t))$ and $(s,t) \mapsto \p_t^j|_{t=0} f(\ell_{x(s),w}(t))$
  are of class $\cC^{k-j,\om}$ for all $j \le k$.
	In view of \eqref{eq:deffprimev} we find that
	$s \mapsto  f'(x(s))(v)$ is of class $\cC^{k-p,\om}$, and the claim is proved.

  Let $x_0 \in (\p X) \cap Y$ and
	let $s \mapsto x(s)$ be any $\cC^\infty$-curve in $Y$ such that $x(s) \in X^\o$ for $0<|s| \le 1$ and $x(0)=x_0$.
	Then $s \mapsto f'(x(s))(v)$ is continuous, by \eqref{smooth}, and hence
	\begin{align*}
		f'(x_0)(v) =  \lim_{s\to 0} f'(x(s))(v).
	\end{align*}

  \textbf{Step 2.}
	Now let $v \in \R^d$ be arbitrary.
	Since $\Ga$ is open, there exist $\ep>0$ and $\xi \in \Ga$ such that $\ep v+\xi \in \Ga$.
	For all $x \in X^\o \cap Y$, we have
	\begin{equation} \label{eq:def2}
    f'(x)(v) =   \frac{f'(x)(\ep v + \xi) - f'(x)(\xi)}{\ep},
  \end{equation}
	and the right-hand side of \eqref{eq:def2}
	extends to points $x \in (\p X) \cap Y$ and satisfies \eqref{smooth}, by the arguments in Step 1.

  Thus for $x_0 \in (\p X) \cap Y$ we define
  \[
  f'(x_0)(v) := \lim_{s \to 0} f'(x(s))(v),
  \]
  where $s \mapsto x(s)$ is a $\cC^\infty$-curve in $Y$
	with $x(0) = x_0$ and $x(s) \in X^\o$ for $0<|s|\le 1$.
  The last paragraph of Step 1 implies that the definition does not depend on the choice of the curve $x$.
  We also see that $f'(x_0)$ is linear as the pointwise limit of $f'(x(s)) \in L(\R^d,\R)$.

	Let us finally show that $f' : Y \to L(\R^d,\R)$ belongs to $\cA^{k-p,\om}(Y,L(\R^d,\R))$.
	Let $x : \R \to Y$ be a $\cC^\infty$-curve and $v \in \R^d$.
	It suffices to check that $s \mapsto f'(x(s))(v)$ is of class $\cC^{k-p,\om}$.
	For $v \in  \Ga$ this follows from \eqref{smooth}.
	For general $v$, $f'(x(s))(v)$ is a linear combination of
	$f'(x(s))(v_1)$ and $f'(x(s))(v_2)$ for $v_i \in  \Ga$
	which locally is independent of $s$.

  To show the supplement assume that $f \in \cA^{k,\om}_Z(Y)$ and let
  $s \mapsto x(s)$ be a $\cC^\infty$-curve in $Y$ with $x(0)=x_0 \in Z$.
  We must prove that
  \begin{equation} \label{eq:Zflat}
    \p_s^j|_{s=0} \big(f'(x(s))(v) \big) = 0 \quad \text{ for } j \le k-p
  \end{equation}
  and all $v \in \R^d$. By construction, it suffices to prove it for $v \in \Ga$.
  To this end consider (as in the paragraph after \eqref{smooth}) the $C^{k,\om}$-functions
  $(s,t) \mapsto f(c_{x(s),v}(t))$ and $(s,t) \mapsto f(\ell_{x(s),w}(t))$
  defined on the open strip $\{(s,t) \in \R^2 : |t| < 1\}$.
  If we compose them with any $\cC^\infty$-curve $r \mapsto (s(r),t(r))$ such that $s(0)=t(0)=0$, we obtain
  functions that vanish to order $k$ at $r=0$, by the assumption on $f$.
  It follows that $(s,t) \mapsto f(c_{x(s),v}(t))$ and $(s,t) \mapsto f(\ell_{x(s),w}(t))$
  vanish to order $k$ at $(s,t)=(0,0)$ (e.g.\ in view of the polarization formula \cite[(7.13.1)]{KM97}).
  Thus, considering \eqref{eq:deffprimev}, we may conclude \eqref{eq:Zflat}.
  The proof is complete.
\end{proof}

\begin{theorem} \label{thm:Cn}
  Let $\al \in (0,1]$, $n \ge 1$ an integer, and $\om$ a modulus of continuity.
  For each $X \in  \sH^\al(\R^d)$ we have $\cA^{np(\al),\om}(X) \subseteq \cC^{n}(X)$.
  If $Y$ is any subset of $X$ then $\cA^{np(\al),\om}_Y(X) \subseteq \cC^{n}_Y(X)$.
\end{theorem}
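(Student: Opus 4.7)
The plan is a straightforward induction on $n \ge 1$, with the essential work already encapsulated in Proposition~\ref{Krieglcusp}. Iterating that proposition strips off $p(\al)$ derivatives per step and gradually promotes arc-differentiability into classical regularity on $X$.

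For the base case $n=1$, take $f \in \cA^{p(\al),\om}(X)$. Proposition~\ref{Krieglcusp} applied with $k=p(\al)$ gives that $f|_{X^\o} \in \cC^{p(\al),\om}(X^\o) \subseteq \cC^1(X^\o)$ (a statement originally due to Boman and Faure) and provides a unique extension $f' : X \to L(\R^d,\R)$ lying in $\cA^{0,\om}(X,L(\R^d,\R))$. To upgrade this to $f \in \cC^1(X)$ we need $f'$ to be continuous on $X$ in the trace topology. But membership in $\cA^{0,\om}$ means $f'$ sends every $\cC^\infty$-curve in $X$ to a continuous curve, i.e., $f'$ is continuous for the $c^\infty$-topology on $X$. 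By Proposition~\ref{topology}, the $c^\infty$-topology on a H\"older set coincides with the trace topology, so $f'$ is continuous on $X$ and hence $f \in \cC^1(X)$.

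For the inductive step, assume the result for $n-1$ and let $f \in \cA^{np(\al),\om}(X)$ with $n \ge 2$. Apply Proposition~\ref{Krieglcusp} with $k = np(\al)$ to obtain $f' \in \cA^{(n-1)p(\al),\om}(X,L(\R^d,\R))$. Identifying $L(\R^d,\R)$ with $\R^d$ componentwise, each scalar coordinate $\partial_j f$ lies in $\cA^{(n-1)p(\al),\om}(X)$, so the inductive hypothesis yields $\partial_j f \in \cC^{n-1}(X)$ for each $j$. This means $f|_{X^\o} \in \cC^n(X^\o)$ and every derivative of order at most $n$ extends continuously from $X^\o$ to $X$, which is precisely $f \in \cC^n(X)$.

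The flatness statement is handled in parallel. If $f \in \cA^{np(\al),\om}_Y(X)$, then plugging any smooth curve $c$ with $c(0) = y \in Y$ into $f$ gives $f(y) = (f\o c)(0) = 0$, so $f|_Y = 0$. The supplementary part of Proposition~\ref{Krieglcusp} furnishes $f' \in \cA^{(n-1)p(\al),\om}_Y(X, L(\R^d,\R))$, and by induction each $\partial_j f \in \cC^{n-1}_Y(X)$, so all derivatives of $f$ of orders $1,\dots,n$ vanish on $Y$. Combined with $f|_Y = 0$ this gives $f \in \cC^n_Y(X)$. The only genuinely substantial input is Proposition~\ref{Krieglcusp}, which is already proved; the remaining subtlety is the base-case passage from ``continuous along arcs'' to ``continuous on $X$'', resolved by Proposition~\ref{topology}.
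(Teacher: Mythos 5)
Your proposal is correct and follows essentially the same route as the paper: iterate Proposition~\ref{Krieglcusp} to peel off $p(\al)$ orders of arc-differentiability per derivative (working componentwise on $L(\R^d,\R)$), and use Proposition~\ref{topology} to convert continuity along $\cC^\infty$-curves into continuity for the trace topology. The paper phrases this as a single induction producing extensions $f^{(m)}$ with $(f^{(m)})_*\cC^\infty(\R,X)\subseteq \cC^{(n-m)p(\al),\om}$, which is the same argument as your induction on $n$.
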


\begin{proof}
  Let $f \in \cA^{np(\al),\om}(X)$.
  \Cref{Krieglcusp} implies by induction that
  the Fr\'echet derivatives $(f|_{X^\o})^{(m)}$, $m \le n$, have unique extensions
  $f^{(m)} : X \to L^m(\R^d,\R)$ which satisfy
  \begin{equation} \label{eq:derA}
    (f^{(m)})_* \cC^\infty(\R,X) \subseteq \cC^{(n-m)p(\al),\om}(\R,L^m(\R^d,\R)).
  \end{equation}
  Since the $c^\infty$-topology of $X$
  coincides with the trace topology from $\R^d$, by \Cref{topology},
  we may conclude that $f \in \cC^{n}(X)$.
  If $f \in \cA^{np(\al),\om}_Y(X)$, then the supplement of \Cref{Krieglcusp} implies that
  $f^{(m)}|_Y=0$ for $m \le n$.
\end{proof}

\subsection{Proof of \Cref{main:Aa}} \label{sec:proofmain:Aa}

   Let $X \in \sH(\R^d)$ and $f \in \cA^\infty(X)$.
   \Cref{thm:Cn} implies that $f|_{X^\o}$ is smooth and its derivatives of all orders extend continuously to $\p X$.
   By \Cref{prop:alpharegular},
   $f$ defines a Whitney jet of class $\cC^\infty$ on $X$ (cf.\ \cite[Proposition 2.16]{Bierstone80a}
   or the proof of \cite[Lemma 10.1]{Rainer18})
   which has a $\cC^\infty$-extension to $\R^d$, by Whitney's extension theorem \cite{Whitney34a}.
   That, conversely, the restriction to $X$ of any $\cC^\infty$-function on $\R^d$ belongs to $\cA^\infty(X)$ is obvious.

   If $f \in \cA^\infty_Y(X)$, then \Cref{thm:Cn} gives $f \in \cC^{\infty}_Y(X)$ and thus any $\cC^\infty$-extension
   vanishes to infinite order on $Y$.
   Conversely, any $\cC^\infty$-function $f$ on $\R^d$ that vanishes to infinite order on $Y$
   satisfies $(f \o c)^{(j)}(0)=0$ for all $j \in \N$ and all $\cC^\infty$-curves $c$ with $c(0) \in Y$ (by the chain rule).

\section{H\"older continuity of arc-differentiable functions on $\al$-sets}
\label{sec:Hoelder}

In this section we prove

\begin{theorem} \label{thm:Hoelder0}
  Let $\al,\be \in (0,1]$ and $X \in \sH^\al(\R^d)$.
  Then
  \begin{equation}  \label{eq:incltoshow}
      \cA^{0,\be}(X) \subseteq \cC^{0,\frac{\al\be}{2q(\al)}}(X).
  \end{equation}
  More generally, if $\om$ is a modulus of continuity then
  \begin{equation} \label{eq:incltoshowomega}
     \cA^{0,\om}(X) \subseteq \cC^{0,\widetilde \om}(X),
  \end{equation}
  where $\widetilde \om(t) = \om(t^{\frac{\al}{2q(\al)}})$.
\end{theorem}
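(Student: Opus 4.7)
My approach is to prove the more general statement \eqref{eq:incltoshowomega} (the H\"older version \eqref{eq:incltoshow} then follows by taking $\om(t)=t^\be$). Since both $\cA^{0,\om}(X)$ and $\cC^{0,\widetilde\om}(X)$ are defined through local conditions, it suffices to derive the estimate $|f(x)-f(y)|\le C\,\widetilde\om(|x-y|)$ for $x,y$ in a small neighborhood of any fixed $x_0\in X$. For $x_0\in X^\o$ the classical Boman--Faure theorem already gives $\cA^{0,\om}(U)=\cC^{0,\om}(U)$ on a small open $U\subseteq X^\o$, and $\om(t)\le\widetilde\om(t)$ for $t\le 1$ finishes the interior case. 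The substance lies at boundary points.

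At $x_0\in\p X$ I would translate so $x_0=0$ and, after an orthogonal change of coordinates (which preserves all the regularity classes, by \S\ref{sec:invariance}), fix a truncated open $\al$-cusp $\Ga=\Ga_d^\al(r,h)$ from the uniform $\al$-cusp property, so that $y+\Ga\subseteq X^\o$ for all $y\in B(0,\ep)\cap\ol X$. The essential geometric input is Lemma~\ref{lem:ploss}: the polynomial curves $\si_v(t):=(t^{2q(\al)}v',t^2v_d)$ lie in $\Ga$ for every $v\in\Ga$ and $0<|t|\le 1$ (because $2q(\al)\al\ge 2$), and since both exponents $2$ and $2q(\al)$ are even, $\si_v$ extends $\cC^\infty$-smoothly across $t=0$ with $\si_v(-t)=\si_v(t)$ and all odd-order derivatives vanishing there.

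The heart of the proof is to build, for each pair $x,y\in B(0,\ep/2)\cap\ol X$, a smooth two-parameter plot $F_{x,y}:\R^2\to X$ together with parameter points $p_x,p_y$ satisfying $F_{x,y}(p_x)=x$, $F_{x,y}(p_y)=y$, and the key bound
\[
  |p_x-p_y|\le C\,|x-y|^{\al/(2q(\al))},
\]
with $F_{x,y}$ having $\cC^\infty$-seminorms on a fixed compact neighborhood of $[p_x,p_y]$ that are uniformly bounded as $(x,y)$ varies. I would assemble $F_{x,y}$ from shifted polynomial branches $x+\si_v$ and $y+\si_w$ (with $v,w\in\Ga$ chosen smoothly in $(x,y)$), joined smoothly by a cutoff: one parameter controls the depth of the ``dip'' into the cusp while the other interpolates between the two branches. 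The parameter-separation exponent is dictated by the scaling of $\si_v$: its horizontal component scales as $t^{2q(\al)}$, so covering a horizontal displacement of order $|x-y|$ requires $|t|$ of order $|x-y|^{1/(2q(\al))}$; the cusp profile $x_d\sim|x'|^\al$ converts horizontal reach into effective Euclidean distance in the cuspidal direction, producing the additional factor $\al$ and yielding the claimed exponent $\al/(2q(\al))$.

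Once such a plot is in hand, the conclusion is routine. Composition with any $\cC^\infty$-curve in $\R^2$ lies in $\cC^\infty(\R,X)$, so $f\circ F_{x,y}\in\cA^{0,\om}(\R^2)=\cC^{0,\om}(\R^2)$ by Boman--Faure on the open set $\R^2$; the uniform $\cC^\infty$-bounds on $F_{x,y}$ (equivalently, regarding the family as a single smooth plot with $(x,y)$ as additional smooth parameters) produce a local H\"older constant independent of $(x,y)$ via a standard uniform-boundedness argument, giving
\[
  |f(x)-f(y)|=\bigl|(f\circ F_{x,y})(p_x)-(f\circ F_{x,y})(p_y)\bigr|\le C\,\om(|p_x-p_y|)\le C\,\widetilde\om(|x-y|).
\]
The main obstacle is the construction of $F_{x,y}$: matching the two polynomial branches $\cC^\infty$-smoothly across $t=0$, which is what forces the even exponents $2$ and $2q(\al)$ from Lemma~\ref{lem:ploss} (rather than the potentially smaller $p(\al)$), together with obtaining the parameter-separation estimate uniformly in $(x,y)$ all the way up to $\p X$; this construction is also what produces the factor $\al/(2q(\al))$, which as the authors note may not be sharp in general.
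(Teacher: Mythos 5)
Your overall strategy --- reduce to a local estimate at boundary points, use curves of the form $(t^{2q(\al)}v',t^2v_d)$ from \Cref{lem:ploss}, and pull the H\"older estimate for $f$ back from a H\"older estimate for composites with smooth plots via Boman--Faure --- is in the right spirit, but there is a genuine gap at the step you call ``routine''. For a \emph{fixed} plot $F_{x,y}$, Boman--Faure gives $f\circ F_{x,y}\in\cC^{0,\om}$ with some local constant, but that constant depends on the plot (and on $f$) in a completely uncontrolled way: the hypothesis $f\in\cA^{0,\om}(X)$ is purely qualitative and carries no quantitative bound relating the $\om$-H\"older seminorm of $f\circ c$ to the $\cC^\infty$-seminorms of $c$. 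Your proposed fix --- regard the family as one smooth plot with $(x,y)$ as extra parameters --- does not work: the parameters $(x,y)$ range over $(X\cap B)^2$, which is not open (the interesting pairs sit on $\p X\times\p X$), so Boman--Faure does not apply; restricting to $(x,y)\in X^\o\times X^\o$ makes the domain open but then the local H\"older constants may blow up as $(x,y)\to\p X\times\p X$, which is exactly the uniformity you need and cannot assume without circularity. The paper obtains this uniformity by contradiction: if the constants were unbounded, one extracts a fast-falling sequence of rescaled connecting curves $c_n$ and glues them with the General Curve Lemma \cite[12.2]{KM97} into a \emph{single} $\cC^\infty$-curve $c$ in $X$ along which $f\circ c$ fails to be $\cC^{0,\om}$. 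That gluing argument (Steps 0--3 of \Cref{sec:Hoelder}, including the two-case dichotomy on whether the H\"older constants $H_n$ on attached model cusps are finite) is the technical core of the proof, not a standard uniform-boundedness principle, and your proposal omits it entirely.

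A secondary issue is the construction of $F_{x,y}$ itself. Joining the two branches emanating from $x$ and from $y$ \emph{inside} $X$ is nontrivial: the paper does this via quadratic B\'ezier curves in quadrants, the substitution $\vh(x)=(x_1^q,\dots,x_{d-1}^q,x_d)$ for simplicial cusps, a chamber decomposition for the symmetrized cusps, and the intersecting-cusps estimate \eqref{eq:intersectingcusps}; your cutoff-joining sketch glosses over whether the join stays in $X$ with uniformly bounded derivatives of all orders. Also, your accounting for the exponent is off: in the paper the factor $\tfrac{1}{2q}$ comes from the degree-$2q$ scaling $s_n^{2q}=2^n|a_n-b_n|$ on the model cusps, while the extra factor $\al$ enters only in Case~(i) of Step~3 through the $\al$-H\"older graph description of $\p X$ (\Cref{rem:Hoelderboundary}), not through the vertical reach of the connecting curves.
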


\Cref{thm:Cn,thm:Hoelder0} imply \Cref{main:A}, since $f^{(n)} \in \cA^{0,\om}(X, L^n(\R^d,\R))$
   in view of \eqref{eq:derA}.
The proof of \Cref{thm:Hoelder0} requires several steps and will fill the rest of the section.
We shall give full details for $\be \in (0,1]$ and comment briefly on the case that $\om$ is a general
modulus of continuity.

   Recall (cf.\ \eqref{eq:defpq}) that
   the integer $q(\al)$ is defined by
   \begin{align*}
         q(\al) := \left\lceil\frac{1}{\al} \right\rceil
         =
         \begin{cases}
             1 & \text{ if } \al = 1,
             \\
             q & \text{ if } \al \in \big[\tfrac{1}{q},\tfrac{1}{q-1}\big), ~ q \in \N_{\ge 2}.
         \end{cases}
   \end{align*}
   Evidently, $p(\al) \le 2 q(\al)$ and  $p(\frac{1}{n}) = 2 q(\frac{1}{n})$ for $n \in \N_{\ge 1}$.

\subsection*{Step 0.}
We will make repeated use of variants of the General Curve Lemma 12.2 in \cite{KM97}.
Let us recall it and fix notation.

Let $s_n \ge 0$ be a sequence of real numbers with $\sum_n s_n < \infty$.
Consider the sequences
\begin{equation} \label{eq:rntn}
   r_n = \sum_{k<n} \left(\frac{2}{k^2} + 2 s_k\right) \quad \text{ and } \quad t_n = \frac{1}{2}(r_n + r_{n+1}).
\end{equation}
as well as the intervals
\begin{equation} \label{eq:intervals}
   I_n := \left[-\frac{1}{n^2} -s_n,\frac{1}{n^2}+s_n\right] \quad \text{ and } \quad J_n := [-s_n,s_n].
\end{equation}
Note that the intervals
$t_n + I_n = [r_n,r_{n+1}]$ have pairwise disjoint interior
and the sequences $r_n$ and $t_n$ have a common finite limit.

\begin{figure}[H]
  \begin{tikzpicture}[scale=1]
		 \tkzInit[xmax=9,ymax=2]

	 \tkzDefPoints{-1/0/a,
	 		 0/0/b,
	 		 2/0/c,
	 		 4/0/d,
       5/0/e,
       6/0/f,
       6.5/0/g,
       7/0/h}

	 \tkzDrawSegment(a,h)

   \tkzDrawPoints(b,d,f)

	 \tkzLabelPoint[below](b){$r_{n-1}$}
   \tkzLabelPoint[below](d){$r_{n}$}
   \tkzLabelPoint[below](f){$r_{n+1}$}

   \tkzLabelPoint[above](c){$t_{n-1} + I_{n-1}$}
   \tkzLabelPoint[above](e){$t_{n}+ I_n$}

   \tkzLabelPoint[left](a){$\cdots$}
   \tkzLabelPoint[right](h){$\cdots$}

	\end{tikzpicture}
\end{figure}

Let $c_n \in \cC^\infty(\R,\R^d)$ be a sequence of $\cC^\infty$-curves
that converges \emph{fast} to $0$, i.e.,
for each $k \in \N$ the sequence $n^k c_n$ is bounded in $\cC^\infty(\R,\R^d)$.
The following fact \cite[12.3]{KM97} (an easy consequence of the Markov inequality on $[-1,1]$) will be useful:
If $c_n$ are polynomials of uniformly bounded degree, then $c_n$ converges fast to $0$ in $\cC^\infty(\R,\R^d)$
provided that the sequence $n \mapsto \sup_{t \in [-1,1]} |c_n(t)|$ converges fast to $0$.

Let $h : \R \to [0,1]$ be a $\cC^\infty$-function with $h|_{\{t \le -1\}} =0$ and $h|_{\{t \ge 0\}} =1$.
Then
\begin{equation} \label{eq:hn}
    h_n(t) := h(n^2(s_n +t))h(n^2(s_n -t))
\end{equation}
has support in $I_n$
and equals $1$ on $J_n$.
It follows that
\begin{equation} \label{eq:c}
  c(t) = \sum_n h_n(t-t_n) c_n(t-t_n)
\end{equation}
defines a curve
$c \in \cC^\infty(\R,\R^d)$ such that $c(t+t_n) = c_n(t)$ for $|t| \le s_n$ and all $n$.

Indeed, at most one summand is non-zero for each $t \in \R$. Since $|h_n^{(j)}(t)| \le n^{2j} H_j$ with $H_j := \max_{t \in \R} |h^{(j)}(t)|$ ,
we have
\begin{align*}
  \MoveEqLeft
   n^2 \sup_{t \in \R} |\p_t^k (h_n(t-t_n) c_n(t-t_n))| =  n^2 \sup_{t \in I_n} |\p_t^k (h_n(t) c_n(t))|
   \\
   &\le n^2 \sum_{j=0}^k \binom{k}{j} n^{2j} H_j \sup_{t \in I_n} |c_n^{(k-j)}(t)|
   \le \left(\sum_{j=0}^k \binom{k}{j} n^{2j+2} H_j\right) \sup_{t \in I_n, \, i\le k} |c_n^{(i)}(t)|.
\end{align*}
Since the right-hand side is bounded in $n$, as $c_n$ converges fast to $0$, $c$ is smooth.

\subsection*{Step 1. Halfspaces and quadrants}

Suppose that $X$ is a \emph{quadrant}
\[
    X=\{x \in \R^d : x_j \ge 0, \, j=i,\ldots,d \},\quad \text{ for some } i = 1,\ldots, d;
\]
this includes the cases of a halfspace ($i=d$) and a `full' quadrant ($i=1$).
In any case $\al = 1$ and $q(1)=1$.
We will show that $\cA^{0,\be}(X) \subseteq \cC^{0,\frac{\be}2}(X)$.

Let $f \in \cA^{0,\be}(X)$.
Suppose for contradiction that $f$ is not locally $\ga$-H\"older near $0 \in X$ with $\ga = \frac{\be}{2}$.
Note that $0$ is the most singular point in $X$; at points in the interior of $X$ we already know that $f$
is even locally $\be$-H\"older and all other boundary points can be treated in a way similar to $0$.
Then there are sequences $a_n$ and $b_n$ in $X$ such that
\begin{align} \label{eq:sequences}
  \begin{split}
    |a_n| \le 4^{-n},~ |b_n| \le 4^{-n},
    \text{ and } \frac{|f(a_n) - f(b_n)|}{|a_n - b_n|^\ga} \ge n 2^{n}  \text{ for all } n.
  \end{split}
\end{align}

We work with quadratic B\'ezier curves.
Let $z_n \in X$ be the point with coordinates $z_{n,j} := \min\{a_{n,j},b_{n,j}\}$, $j = 1,\ldots,d$, and consider
the quadratic B\'ezier curve associated with the triple $(a_n,z_n,b_n)$,
\[
 B_n(t) = z_n + (1-t)^2 (a_n-z_n) + t^2 (b_n-z_n), \quad t \in \R.
\]
Then $B_n$ is a parabola through the points $B_n(0)=a_n$ and $B_n(1) = b_n$
tangent in $a_n$ to the line through $a_n$ and $z_n$ and in $b_n$ to the line through $z_n$ and $b_n$.
If $z_n = a_n$, then the image of $B_n$ is the halfline $a_n + t^2 (b_n -a_n)$; similarly, if $z_n = b_n$.
In any case, $B_n(t)$ is contained in $X$ for all $t \in \R$, by the definition of $z_n$.


Set $s_n^2 := 2^n |a_n - b_n|$ and $c_n(t) := B_n(\frac{t}{s_n})$.
Then the sequence of $\cC^\infty$-curves $c_n$ is contained in $X$ and converges fast to $0$.
Indeed, the $c_n$ are quadratic polynomials and
\begin{equation} \label{eq:Bezierest}
  \max\{|a_n - z_n|,|b_n - z_n|\}  \le |a_n - b_n|  = \frac{s_n^2}{2^n}
\end{equation}
from which the claim follows easily.

We may conclude that the $\cC^\infty$-curve $c$ defined in \eqref{eq:c} lies in $X$,
because each $c_n$ does and multiplication by $h_n$ acts pointwise as a homothety with center $0$ and positive ratio.
Since $c(t+t_n) = c_n(t)$ for $t \in [-s_n,s_n]$,
we have, in view of \eqref{eq:sequences} and as $\be=2\ga$ and $2^{\ga n} \le 2^n$,
\begin{align} \label{eq:contradiction}
   \frac{1}{s_n^\be} |(f \o c)(t_n+s_n) - (f \o c)(t_n)| \ge  \frac{|f(b_n) - f(a_n)|}{2^{n} |a_n - b_n|^{\ga}} \ge n,
\end{align}
contradicting $f \in \cA^{0,\be}(X)$.

Let us briefly indicate how to modify the arguments if $f \in \cA^{0,\om}(X)$, where $\om$ is a modulus of continuity.
For contradiction we may suppose that there are sequences $a_n$ and $b_n$ in $X$ with $|a_n| \le 4^{-n}$, $|b_n| \le 4^{-n}$
and
\[
|f(a_n) - f(b_n)| \ge n 2^{n} \om(|a_n - b_n|^{1/2})
\]
for all $n$. With the same choices as above we find a $\cC^\infty$-curve $c$ in $X$ such that
\begin{align*}
   \frac{1}{\om(s_n)} |(f \o c)(t_n+s_n) - (f \o c)(t_n)| \ge  \frac{|f(b_n) - f(a_n)|}{2^{n} \om(|a_n - b_n|^{1/2})} \ge n,
\end{align*}
since $\om(s_n) = \om(2^{n/2} |a_n - b_n|^{1/2}) \le 2^{n} \om(|a_n - b_n|^{1/2})$ as $\om$ is increasing and subadditive.

\subsection*{Step 2. Simplicial and cubical cusps}

Let $q \in \N_{\ge 1}$.
Consider the unbounded simplicial cusp
\[
  S_q = \{(x_1,\ldots,x_d) \in \R^d : 0 \le x_1 \le x_{2} \le \cdots \le x_{d-1} \le x_d^q \}.
\]
There is the homeomorphism $\vh : S_1 \to S_q$ given by
\[
    \vh(x_1,\ldots,x_d) = (x_1^q,\ldots,x_{d-1}^q,x_d)
\]
and $S_q$ is the union of the curves $(t^q x',t)$, $t \in [0,\infty)$, where $x' \in \{0 \le x_1 \le x_{2} \le \cdots \le x_{d-1} \le 1\}$.
Note that $S_1$ is the image of the quadrant $\{x \in \R^d: x_j \ge 0, \, j =1,\ldots,d\}$
under a linear isomorphism.
If $a,b$ are two different points in $S_q$, we may consider the points $\vh^{-1}(a)$ and $\vh^{-1}(b)$ in $S_1$.
By Step 1,
there is a quadratic curve $B : \R \to S_1$ with $B(0)=\vh^{-1}(a)$ and $B(1)=\vh^{-1}(b)$.
Thus the smooth curve $\vh \o B$ lies entirely in $S_q$ and satisfies $(\vh \o B)(0)=a$ and $(\vh \o B)(1)=b$.

Actually, $S_q$ is not a H\"older set, since the uniform cusp property fails at the tip.
But for technical reasons it is convenient to transit
this intermediate step. Let us show that
\begin{equation} \label{eq:caseSq}
     \cA^{0,\be}(S_q) \subseteq \cC^{0,\frac{\be}{2q}}(S_q).
\end{equation}

Let $f \in \cA^{0,\be}(S_q)$.
Suppose for contradiction that $f$ is not locally $\ga$-H\"older near $0 \in S_q$ with $\ga = \frac{\be}{2q}$;
as in Step 1 it is enough to consider the most singular point.
Then there are sequences $a_n$ and $b_n$ in $S_q$ satisfying \eqref{eq:sequences}.
Let $\vh \o B_n : \R \to S_q$ be the smooth curve satisfying $(\vh \o B_n)(0)=a_n$ and $(\vh \o B_n)(0)=b_n$
that was constructed at the beginning of Step 2.

Set $s_n^{2q} := 2^n |a_n - b_n|$  and $c_n(t) := \vh(B_n(\frac{t}{s_n}))$.
The sequence of curves $c_n$ is clearly contained in $S_q$.
It converges fast to $0$.
Indeed, $c_n$ is a polynomial of degree $2q$, so it suffices to check that $\sup_{t \in [-1,1]} |c_n(t)|$ tends fast to $0$.
Let
$\tilde a_n := \vh^{-1}(a_n)$
and
$\tilde b_n :=\vh^{-1}(b_n)$. Then $\tilde a_{n,j} = a_{n,j}^{1/q}$ and $\tilde b_{n,j} = b_{n,j}^{1/q}$ for $j \le d-1$
as well as $\tilde a_{n,d} = a_{n,d}$ and $\tilde b_{n,d} = b_{n,d}$.
Up to a linear isomorphism we may suppose that $\tilde a_n$ and $\tilde b_n$ lie in the quadrant $\{x \in \R^d: x_j \ge 0, \, j =1,\ldots,d\}$
so that $B_n(t)$ is the quadratic B\'ezier curve from Step 1 with associated control point $\tilde z_n$. We may conclude that
\begin{equation*}
  \max\{|\tilde a_n - \tilde z_n|,|\tilde b_n - \tilde z_n|\}  \le |\tilde a_n - \tilde b_n| \le \sqrt{d}\, |a_n-b_n|^{1/q} = \sqrt{d}\,\frac{s_n^{2}}{2^{n/q}}
\end{equation*}
as in \eqref{eq:Bezierest}, since
\begin{align*}
   |\tilde a_n - \tilde b_n|^2
   &= \sum_{j=1}^{d-1} |a_{n,j}^{1/q}-b_{n,j}^{1/q}|^2 + |a_{n,d}-b_{n,d}|^2
   \\
   &\le   \sum_{j=1}^{d-1} |a_{n,j}-b_{n,j}|^{2/q} + |a_{n,d}-b_{n,d}|^{2/q}
   \le d\, |a_n-b_n|^{2/q}.
\end{align*}
It follows that the maximum of $|B_n(\frac{t}{s_n})|$ on $[-1,1]$ converges fast to $0$
and hence all the more that of $|\vh(B_n(\frac{t}{s_n}))|$.

With the sequences $r_n$ and $t_n$
and the functions $h_n$ from Step 0 we construct a smooth curve $c$ in the following way.
First note that,
if $\ch : \R \to [0,1]$ and $z = (z_1,\ldots, z_d): \R \to S_q$, then the weighted homothety with center $0$ and ratio $\ch$,
\[
   \ch \wmult z := (\ch^q z_1, \ch^q z_2,\ldots,\ch^q z_{d-1},\ch z_{d}),
\]
also takes values in $S_q$. So
\[
 c(t) = \sum_n  (h_n \wmult c_n)(t-t_n)
\]
defines a curve $c : \R \to S_q$ of class $\cC^\infty$, by a calculation similar to the one at the end of Step 0,
since at most one summand is non-zero for each $t \in \R$ and the sequence $c_n$ converges fast to $0$.
In view of $c(t+t_n) = c_n(t)$ for $t \in [-s_n,s_n]$,
we are led to the contradiction \eqref{eq:contradiction}, since $a_n$ and $b_n$ satisfy \eqref{eq:sequences} and as $\be=2 q \ga$.
Thus \eqref{eq:caseSq} is proved.

For a general modulus of continuity $\om$ and $f \in \cA^{0,\om}(S_q)$ we analogously get
\begin{align*}
   \frac{1}{\om(s_n)} |(f \o c)(t_n+s_n) - (f \o c)(t_n)| \ge  \frac{|f(b_n) - f(a_n)|}{2^{n} \om(|a_n - b_n|^{1/(2q)})} \ge n.
\end{align*}

We may symmetrize the result \eqref{eq:caseSq} in the following way.
Set
\[
  \Si_q := \bigcup_{\si \in G} \si S_q,
\]
where $G$ is the group of isometries of $\R^d$ generated by the reflections in the hyperplanes $\{x_j =0\}$, $j=1,\ldots,d-1$, and $\{x_j = x_k\}$, $1 \le j<k \le d-1$.
Then $\Si_q$ is the union of the curves $(t^q x',t)$, $t \in [0,\infty)$, where $x' \in [-1,1]^{d-1}$.
The cubical cusp $\Si_q$ belongs to $\sH^{1/q}(\R^d)$.


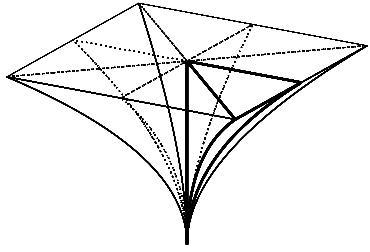
\begin{figure}[H]
\begin{tikzpicture}[scale=0.7]

\begin{axis}
  [   view={120}{25},
      enlargelimits=false,
      xticklabels=\empty,
      yticklabels=\empty,
      zticklabels=\empty,
      axis line style={draw=none},
      tick style={draw=none},
      label style={draw=none}
  ]

\addplot3[variable=t,mesh,color=black,style=very thick,domain=0:1] (t^2,t^2,t);
\addplot3[variable=t,mesh,color=black,style=very thick,domain=0:1] (0,t^2,t);
\addplot3[variable=t,mesh,color=black,style=very thick,domain=0:1] (0,0,t);
\addplot3[variable=t,mesh,color=black,style=very thick,domain=0:1] (0,t,1);
\addplot3[variable=t,mesh,color=black,style=very thick,domain=0:1] (t,t,1);
\addplot3[variable=t,mesh,color=black,style=very thick,domain=0:1] (t,1,1);

\addplot3[variable=t,mesh,color=black,dotted,domain=0:1] (t^2,0,t);
\addplot3[variable=t,mesh,color=black,dotted,domain=-1:1] (t,0,1);
\addplot3[variable=t,mesh,color=black,domain=-1:1] (1,t,1);

\addplot3[variable=t,mesh,color=black,domain=0:1] (t^2,-t^2,t);
\addplot3[variable=t,mesh,color=black,dotted,domain=-1:1] (t,-t,1);
\addplot3[variable=t,mesh,color=black,dotted,domain=0:1] (-t,-t,1);
\addplot3[variable=t,mesh,color=black,dotted,domain=0:1] (0,-t,1);

\addplot3[variable=t,mesh,color=black,domain=-1:1] (t,-1,1);
\addplot3[variable=t,mesh,color=black,domain=-1:1] (-1,t,1);
\addplot3[variable=t,mesh,color=black,domain=0:1] (-t,1,1);

\addplot3[variable=t,mesh,color=black,domain=0:1] (-t^2,t^2,t);
\addplot3[variable=t,mesh,color=black,dotted,domain=0:1] (0,-t^2,t);
\addplot3[variable=t,mesh,color=black,domain=0:1] (-t^2,-t^2,t);
\addplot3[variable=t,mesh,color=black,dotted,domain=0:1] (-t^2,0,t);

\end{axis}

\end{tikzpicture}
\caption{The simplicial cusp $S_q$ and the cubical cusp $\Si_q$ (truncated at $\{x_d = r\}$).}
\end{figure}


We claim that
\begin{equation} \label{eq:caseSiq0}
  \cA^{0,\be}(\Si_q) \subseteq \cC^{0,\frac{\be}{2q}}(\Si_q).
\end{equation}
For,
let $P$ be the union of the hyperplanes that separate the chambers $\si S_q$, $\si \in G$.
Let $f \in \cA^{0,\be}(\Si_q)$, $r>0$, and $y_1, y_2 \in \Si_q \cap B(0,r)$.
Now $\Si_q \cap B(0,r)$ is quasiconvex: There is a rectifiable curve $\pi$ in $\Si_q \cap B(0,r)$ joining $y_1$ and $y_2$
of length $\ell(\pi) \le C \, |y_1-y_2|$, where the constant $C>0$ depends only on $q$, $d$, and $r$.
Let $p_1,p_2,\ldots$ be the points, where $\pi$ intersects $P$.
We may assume that any two consecutive points in the list $p_0:=y_1,p_1,\ldots,p_k,p_{k+1}:=y_2$ belong to the same chamber $\si S_q$
and no chamber contains three or more points in the list, by omitting redundant points. 
Then $k$ is bounded by a constant that depends only on the number of chambers.
Thus, for some constant $D>0$,
\begin{align} \label{eq:symmetric}
   |f(y_1) - f(y_2)|
   &\le \sum_{j = 0}^k |f(p_j) - f(p_{j+1})|
   \\
   &\le \sum_{j = 0}^k D\, |p_j -p_{j+1}|^{\frac{\be}{2q}} \notag
   \\
   &\le D (k+1) \ell(\pi)^{\frac{\be}{2q}}
   \le D C^{\frac{\be}{2q}} (k+1)\,  |y_1 -y_2|^{\frac{\be}{2q}}, \notag
\end{align}
because $f|_{\si S_q} \in \cA^{0,\be}(\si S_q)\subseteq \cC^{0,\frac{\be}{2q}}(\si S_q)$ by \eqref{eq:caseSq}.
Thus \eqref{eq:caseSiq0} is proved.
For a general modulus of continuity $\om$ the same argument applies, since $\om$ is assumed to be increasing and subadditive.

In the next step, we shall need the result for truncated cubical cusps
\[
  \Si_q(r) := \Si_q \cap \{x \in \R^d : x_d \le r\}, \quad r>0.
\]
That is
\begin{equation} \label{eq:caseSiq}
  \cA^{0,\be}(\Si_q(r)) \subseteq \cC^{0,\frac{\be}{2q}}(\Si_q(r)).
\end{equation}
To see this let $f \in \cA^{0,\be}(\Si_q(r))$.
It suffices to show that each point of $\Si_q(r)$ has a neighborhood $U$ in $\Si_q(r)$ such that $f$ is $\frac{\be}{2q}$-H\"older
on $U$.
Observe that near each of its points the set $\Si_q(r)$ is diffeomorphic to an open subset of one of the sets $X$ already treated;
note that the affine hyperplane $\{x_d = r\}$ meets each boundary face of $\Si_q$ transversally.
Using a smooth cut-off function we may assume that there is $\tilde f \in \cA^{0,\be}(X)$ with $\tilde f|_U = f|_U$
so that $f$ is $\frac{\be}{2q}$-H\"older on $U$ (cf.\ \Cref{sec:invariance}).
The reasoning for a general modulus of continuity is the same.

\subsection*{Step 3. The general case}

Let $\al,\be \in (0,1]$ and $X \in \sH^\al(\R^d)$.
Our goal is to show the inclusion \eqref{eq:incltoshow} (and \eqref{eq:incltoshowomega}).

Let $q := q(\al) \in \N_{\ge 1}$, i.e., $\al \in [\frac{1}{q},\frac{1}{q-1})$ if $q \ge 2$ or $\al = 1$ if $q=1$.
In any case, $X \in \sH^{1/q}(\R^d)$, since $\sH^\al(\R^d) \subseteq \sH^{1/q}(\R^d)$.

We will need the truncated cusps $\Si_q(r)$ and
\[
   S_q(r) := S_q \cap \{x \in \R^d : x_d \le r\}
\]
as well as their translates
\[
  \Si_q(y,r) := y + \Si_q(r) \quad \text{ and } \quad S_q(y,r) := y + S_q(r).
\]

First observe that
there is a universal constant $c>0$ such that for sufficiently small $r_1,r_2>0$ we have that
\begin{equation} \label{eq:intersectingcusps}
    |y_1 - y_2| < c \min\{r_1^q,r_2^q\} \text{ implies } S_q(y_1,r_1) \cap S_q(y_2,r_2) \ne \emptyset.
\end{equation}
Indeed, suppose that $r_1 \le r_2$ and initially $y_1 = y_2 =0$ so that $S_q(r_1) = S_q(y_1,r_1) \subseteq S_q(y_2,r_2) = S_q(r_2)$.
Then we take any direction $v \in \mathbb S^{d-1}$ and
move $S_q(r_1)$ in direction $v$ as long as $S_q(tv,r_1) = tv + S_q(r_1)$, $t > 0$, and $S_q(r_2)$ have a common point.
Let $t_v$ be the supremum of such $t$. A lower bound for $\inf_{v \in \mathbb S^{d-1}} t_v$ is
the minimal distance of a vertex of the simplicial cusp $S_q(r_1)$ to its opposite facet
(the only facet that does not contain the vertex as a boundary point).
If $r_1$ is sufficiently small, then the vertex with the minimal
distance $\de$ to its opposite facet is $r_1 e_d$. In the case $q=1$, it is now easy to conclude \eqref{eq:intersectingcusps}.
Suppose that $q>1$.
We clearly have $\de \le r_1^q$.
The point $p$ in the opposite facet that realizes the distance has the form $p_{d}^q e_{d-1} + p_d e_d$
so that
\[
  \de = \sqrt{p_d^{2q} + (r_1-p_d)^2}.
\]
This implies $\de \ge r_1 - p_d$ and thus
$p_d \ge \frac{r_1}{2}$, since otherwise
$\frac{r_1}{2} < \de \le r_1^q$, a contradiction for $r_1 < \frac{1}{2}$.
Consequently, $\de \ge p_{d}^{q} \ge (\frac{r_1}{2})^{q}$ and \eqref{eq:intersectingcusps} follows.
Note that clearly also the symmetric cusps $\Si_q(y_i,r_i)$ have the property \eqref{eq:intersectingcusps}.

\begin{figure}[H]
\begin{tikzpicture}[domain=0:1, scale = 1]
  \draw[black, line width = 0.20mm]   plot[smooth,domain=0:1] (\x, {2* (\x)^(1/2)});

  \tkzDefPoints{0/0/a,
      0/2/b,
      1/2/c,
      0.594/1.542/p}

  \tkzDrawPoints(a,b,p)

  \tkzDrawSegment(a,b)
  \tkzDrawSegment(b,c)
  \tkzDrawSegment(b,p)

  \tkzLabelPoint[left](a){$0$}
  \tkzLabelPoint[left](b){$r_1 e_d$}
  \tkzLabelPoint[right](p){$p = p_{d}^q e_{d-1} + p_d e_d$}

  \tkzLabelSegment[below,pos=.4](b,p){$\de$}
  \tkzLabelSegment[left,pos=.5](a,b){$r_1$}
  \tkzLabelSegment[above,pos=.5](b,c){$r_1^q$}

\end{tikzpicture}
\end{figure}

Now let $f \in \cA^{0,\be}(X)$.
We will show that each $z \in X$ has a neighborhood on which $f$ is
$\ga$-H\"older with
\[
  \ga = \frac{\al\be}{2q}.
\]
We may assume without loss of generality that $z =0$.
Since $X \in \sH^{1/q}(\R^d)$, we may suppose that (after an affine transformation)
\begin{equation} \label{eq:cone}
  \text{for all $y \in X  \cap B(0,\ep)$ we have $\Si_q(y,1) \subseteq X$}
\end{equation}
provided that $\ep>0$ is sufficiently small.
We know from \eqref{eq:caseSiq} that $f$ is $\ga'$-H\"older on $\Si_q(y,1)$, where
\[
  \ga':=  \frac{\ga}{\al} =  \frac{\be}{2q},
\]
thus in particular on the subsets
\[
  \mathbf \Si_n(y) := \Si_q(y,4^{-n}), \quad n \in \N.
\]
Let $H_n(y)$ be the $\ga'$-H\"older constant of $f$ on $\mathbf \Si_n(y)$, i.e.,
\[
  H_n(y) = \sup_{a \ne b \in \mathbf \Si_n(y)} \frac{|f(a)-f(b)|}{|a-b|^{\ga'}}.
\]
Let $c>0$ be the constant from \eqref{eq:intersectingcusps} and
set
\[
  H_n := \sup \big\{H_n(y) : y \in X  \cap B(0,c\, 4^{-(n+2)q})\big\} \in [0,\infty]
\]
for all large integers $n$ (so that $c\, 4^{-(n+2)q}$ is smaller than the $\ep$ in \eqref{eq:cone}).
We shall distinguish the following two cases:
\begin{enumerate}
  \item[(i)] There exists $n$ such that $H_n<\infty$.
  \item[(ii)] $H_n= \infty$ for all $n$.
\end{enumerate}

\subsubsection*{Case \thetag{i}}

By assumption there is some integer $n$ such that $H_n<\infty$.
Fix this $n$.
Taking $0< \ep_1 \le  c\, 4^{-(n+2)q}$ small enough,
we may assume that in a neighborhood of $B(0,\ep_1)$ the set $X$
is the epigraph $\{x_d \ge \ps(x')\}$ of an $\al$-H\"older function $\ps$, where $x' = (x_1,\ldots,x_{d-1})$
ranges over some convex open set in $\R^{d-1}$; see \Cref{rem:Hoelderboundary}.
Since $\ps$ is uniformly continuous, we may assume that
$\sup_{x',y'} |\ps(x') -\ps(y')| \le 4^{-(n+1)}$
if we shrink $\ep$ if necessary.
Thus, for each $y \in X \cap B(0,\ep_1)$ the cubical cusp $\mathbf \Si_n(y)$
is contained in $X$, by \eqref{eq:cone}, and, having height $4^{-n}$, has non-empty intersection with
\[
  K := \left\{x : x_d > \sup_{x'} \ps(x') + 4^{-(n+1)}\right\}.
\]
We know that $f$ is $\be$-H\"older
on $K$, say with H\"older constant $H$, since $K$ is relatively compact in the interior of $X$.
Let $u=(u',u_d)$ and $v=(v',v_d)$ be any two different points in $X \cap B(0,\ep_1)$.
Then we find $\tilde u \in \mathbf \Si_n(u) \cap K$
and $\tilde v \in \mathbf  \Si_n(v) \cap K$
such that $\tilde u'$ and $\tilde v'$ lie on the line segment $[u',v']$,
$\tilde u_d  = \tilde v_d$,
and
\[
  |u_d - \tilde u_d| \le C\, |u' - \tilde u'|^{\al}
  \quad \text{ and } \quad
    |v_d - \tilde v_d| \le C\, |v' - \tilde v'|^{\al}.
\]
Consequently,
\[
  |u - \tilde u|^2 =   |u' - \tilde u'|^2 + |u_d - \tilde u_d|^2 \le C_1^2\,  |u' - \tilde u'|^{2\al}
\]
and analogously
\[
  |v - \tilde v|^2 \le C_1^2 \, |v' - \tilde v'|^{2\al}.
\]
Since $|\tilde u - \tilde v| = |\tilde u' - \tilde v'|$, we conclude
\begin{align*}
  |f(u) - f(v)| &\le |f(u) - f(\tilde u)| + |f(\tilde u) - f(\tilde v)| + |f(\tilde v) - f(v)|
  \\
  &\le H_n |u - \tilde u|^{\ga'} + H  |\tilde u - \tilde v|^{\be}  + H_n |v - \tilde v|^{\ga'}
  \\
  &\le H_n C_1^{\ga'} |u' - \tilde u'|^{\al \ga'} + H |\tilde u' - \tilde v'|^{\be}  + H_n C_1^{\ga'}  |v' - \tilde v'|^{\al \ga'}
  \\
  &\le C_2\, |u' - v'|^{\ga}
  \\
  &\le C_2\, |u - v|^{\ga}.
\end{align*}
Since $u$ and $v$ were arbitrary, we proved that $f$ is $\ga$-H\"older on $X \cap B(0,\ep_1)$.
So Case~(i) is done.

\subsubsection*{Case \thetag{ii}}

In this case, we prove that $f$ is even $\ga'$-H\"older in a neighborhood of $0 \in X$.
By assumption,
for each sufficiently large $n$
there is a sequence $(y^n_m)_m \subseteq X \cap B(0,c\, 4^{-(n+2)q})$ such that
$\{H_n(y^n_m) : m \in \N\}$ is unbounded.

Let $H^{\si}_n(y^n_m)$ denote the $\ga'$-H\"older constant of $f$ on $\si S_q(y^n_m, 4^{-n})$, where $\si \in G$.
The argument surrounding \eqref{eq:symmetric} shows that
\[
  H_n(y^n_m) \le \on{const} \cdot \sum_{\si \in G} H^{\si}_n(y^n_m).
\]
Since $G$ is finite, we may pass to a subsequence of $m$ and assume that
$H^{\si_n}_n(y^n_m) \to \infty$ as $m \to \infty$ for some $\si_n \in G$.
Passing to a subsequence of $m$ again, we may suppose
that $H^{\si_n}_n(y^n_m) > m 2^{m}$ for all $m \in \N$.
In particular, the diagonal sequence $y_n := y^n_n$ satisfies
$y_n \in X  \cap B(0,c\, 4^{-(n+2)q})$ and
$H^{\si_n}_n(y_n)> n 2^{n}$ for all sufficiently large $n$, say $n \ge N$.
Using the finiteness of $G$ again, we find $\ta \in G$ and
a strictly increasing subsequence $n_k \ge N$ of $n$ such that $\si_{n_k} = \ta$.
Thus $H^{\ta}_{n_k}(y_{n_k})> n_k 2^{n_k}$ for all $k$.
We may assume without loss of generality that $\ta = \id$.
For all $n \ge N$ we set
\[
  \mathbf S_{n} := S_q(y_{n}, 4^{-n}).
\]
So
there exist $a_{n_k} \ne b_{n_k} \in \mathbf S_{n_k}$ such that
\begin{align} \label{eq:contra}
   |f(a_{n_k}) - f(b_{n_k})| \ge n_k 2^{n_k} |a_{n_k} - b_{n_k}|^{\ga'} \quad \text{ for all } k.
\end{align}
Since
\[
  |y_{n} - y_{n+1}| \le |y_{n}| + |y_{n+1}| \le c\, 4^{-(n+2)q} + c\, 4^{-(n+3)q} \le \frac{c}{2}\, 4^{-(n+1)q},
\]
there exists $u_n \in \mathbf S_n \cap \mathbf S_{n+1}$ for all $n \ge N$, by \eqref{eq:intersectingcusps}.
For each $n$ in the complement
of the sequence $(n_k)$ in $\N_{\ge N}$
let $a_n := u_{n-1}$ and $b_n := u_n$; we may assume that $u_{n-1}\ne u_n$.
Then $a_n \ne b_n \in \mathbf S_n$ for all $n\ge N$.

Set $s^{2q}_n = 2^{n} |a_n - b_n|$.
By Step 2, for each $n \ge N$ there exists a $\cC^\infty$-curve $c_n$ such that
\begin{itemize}
     \item $c_n(0)=a_n$, $c_n(s_n)=b_n$,
     \item $c_n$ converges fast to $0$, and
     \item $c_n|_{[0,s_n]}$ is contained in $\mathbf S_n$ and $c_n|_{I_n}$ is contained in $X$, by \eqref{eq:cone},
     where $I_n = [-\frac{1}{n^2} - s_n,\frac{1}{n^2} + s_n]$; cf.\ \eqref{eq:intervals}.
\end{itemize}

Let $r_n$ and $t_n$ be the sequences from \eqref{eq:rntn} and $h_n$ the function defined in \eqref{eq:hn}.
Note that, if $\ch : \R \to [0,1]$ and $z,w : \R \to S_q(y,r)$, then also
$\ch \wmult z + (1-\ch) \wmult w$ takes values in $S_q(y,r)$. Indeed,
up to a translation we may assume that $y = 0$. Then
\begin{align*}
   0 &\le \ch^q z_{1} + (1-\ch)^q w_1,
   \\
   \ch^q z_{j} + (1-\ch)^q  w_{j} &\le \ch^q z_{j+1} + (1-\ch)^q  w_{j+1}, \quad j = 1,\cdots, d-2,
   \\
   \ch^q z_{d-1}  + (1-\ch)^q w_{d-1} &\le \ch^q z_{d}^q  + (1-\ch)^q  w_{d}^q \le (\ch z_{d} + (1-\ch)  w_{d})^q,
   \\
   \ch z_d + (1-\ch)  w_d &\le \ch r + (1-\ch)  r = r.
\end{align*}
We set
\begin{align*}
    A_n(t) &:= (h_n \wmult c_n)(t-t_n),
    \\
    B_n(t) &:= (1-h_n) \wmult \big( u_{n-1} \mathbf{1}_{(r_n,t_n]} + u_n \mathbf{1}_{[t_n,r_{n+1}]}\big)(t-t_n),
    \intertext{where $\mathbf{1}_A$ is the characteristic function of the set $A$, and }
    c(t) &:= \sum_n (A_n(t) + B_n(t)).
\end{align*}
By the above observations and a calculation similar to the one at the end of Step~0,
we see that $c$ is a $\cC^\infty$-curve in $X$;
hereby we use that $u_n$ converges fast to $0$ and $h_n$ has support in $I_n$ and equals $1$ on $[-s_n,s_n]$.
Since $c(t+t_n) = c_n(t)$ for $t \in [-s_n,s_n]$ and $\be=2q\ga'$,
we have, in view of \eqref{eq:contra},
\begin{align*}
   \frac{1}{s_{n_k}^\be} |(f \o c)(t_{n_k}+s_{n_k}) - (f \o c)(t_{n_k})| \ge  \frac{|f(b_{n_k}) - f(a_{n_k})|}{2^{n_k} |b_{n_k} - a_{n_k}|^{\ga'}} \ge {n_k},
\end{align*}
contradicting $f \in \cA^{0,\be}(X)$.
Thus \eqref{eq:incltoshow} is proved.

If $\om$ is a general modulus of continuity, then these arguments also
give a proof of \eqref{eq:incltoshowomega}: it suffices to replace the moduli $t \mapsto t^\ga$ and $t \mapsto t^{\ga'}$
by $\widetilde \om(t) :=  \om(t^{\frac{\al}{2q}})$ and $\widetilde \om'(t) :=  \om(t^{\frac{1}{2q}})$, respectively.
The proof of \Cref{thm:Hoelder0} is complete.

\begin{remark} \label{rem:square}
  We do not know if $\frac{\al}{2q(\al)}$ in the statement of \Cref{thm:Hoelder0} and \Cref{main:A} can be replaced by
  $\frac{1}{2q(\al)}$
  as in the special cases of cusps. The factor $\al$ stems from Case (i) in Step 3, but it is possible that it is just
  an artefact of the proof.
\end{remark}

\section{On the optimality of the results} \label{sec:optimality}

Let us discuss the optimality and limitations of \Cref{main:A}.
The reader's attention is also drawn to
\Cref{ex:irrational} and the examples in \cite[Section 10]{Rainer18}, especially Example 10.4.

\subsection{Loss of derivatives}

By \Cref{main:A}, any function $f \in \cA^{np(\al),\om}(X)$ on an $\al$-set $X$ possesses $n$ continuous Fr\'echet derivatives on $X$.
\Cref{ex:lossofderivatives} shows that the integer $p(\al)$ is optimal in the following sense:
if $p'<p(\al)$ is another integer, then not every function $f \in \cA^{np',\om}(X)$ has $n$ continuous Fr\'echet derivatives on $X$.
Actually, for the set $X$ in \Cref{ex:lossofderivatives} we find $\cA^{np(\al)-3,1}(X) \not\subseteq \cC^{n}(X)$ for suitable $\al$ and $n$.

\begin{example} \label{ex:lossofderivatives}
  Fix $\al \in (0,1)$
  and set
  \[
    X := \big\{(x,y) \in \R^2 : x\ge 0, \, |y| \le x^{1/\al}\big\} \in \sH^\al(\R^2).
  \]
  For $n \in \N$ consider the function $f_n : X \to \R$ defined by
  \[
    f_n(x,y) :=
    \begin{cases}
        \frac{y^{n+1}}{x} & \text{ if } x \ne 0,
        \\
        0 & \text{ if } x=0.
    \end{cases}
  \]
  Then $f_n$ is smooth on $X^\o$ and extends continuously to $\p X$; indeed, $|\frac{y^{n+1}}{x}| \le x^{\frac{n+1}{\al}-1} \to 0$ as $x \to 0$, since $n+1 > \al$.
  Similarly,
  $\p_y^m f_n = \frac{(n+1)!}{(n+1-m)!} f_{n-m}$
  extends continuously to $\p X$ for all $m\le n$, but $\p_y^{n+1} f_n(x,y) = (n+1)!\frac{1}{x}$ does not.
  Moreover, $\p_x^m f_n(x,y) = (-1)^m m!\, \frac{y^{n+1}}{x^{m+1}}$ extends continuously to $\p X$ for all $m \le n$.
  Similarly, one sees that the mixed partial derivatives $\p_x^\ell \p_y^m f_n$ extend continuously to $\p X$ if $\ell+m \le n$.
  That means $f_n \in \cC^{n}(X) \setminus \cC^{n+1}(X)$.
  We also see that $f_n$ is $n$-flat on $\{0\}$.

  Let $c(t) = (x(t),y(t))$ be a $\cC^\infty$-curve in $X$.
  Then $f_n\o c$ is of class $\cC^{\lfloor \be \rfloor}$, where $\be = \frac{2(n+1)}{\al} - 2$ and $\lfloor \be \rfloor$ is the largest integer $\le \be$.
  Consequently, $f_n \in \cA^{\lfloor \be \rfloor-1,1}(X)$.
  This is a consequence of
  the following result \cite[Theorem 7]{JorisPreissmann90}:
  {\it If $\vh,\ps : \R \to \R$ satisfy $\ps \in \cC^\infty$, $\vh \ps \in \cC^\infty$, and $|\vh| \le |\ps|^\ga$ for some positive constant
  $\ga$, then $\vh \in \cC^{\lfloor 2\ga \rfloor}$.}
  Take $\vh = f_n \o c$ and $\ps = x$.

  Now we specify to $\al = \frac{2}{p}$ for some $p \in \N_{\ge 3}$ so that $p(\al)=p$ and $\lfloor \be \rfloor= (n+1)p-2$.
  Thus $f_n \in \cA^{(n+1)p-3,1}(X) \subseteq \cA^{np,1}(X)$. \Cref{main:A} yields $f_n \in \cC^n(X)$, confirming what we checked directly above.
  We also see that $\cA^{(n+1)p-3,1}(X) \not \subseteq \cC^{n+1}(X)$.

  Moreover, if there were a positive integer $p'<p$ such that
  $\cA^{(n+1)p',1}(X) \subseteq \cC^{n+1}(X)$, then
  $f_n \in \cA^{np,1}(X) \subseteq \cA^{(n+1)p',1}(X) \subseteq \cC^{n+1}(X)$ as soon as $n \ge \frac{p'}{p-p'}$, a contradiction.
\end{example}

\begin{remark} \label{rem:unbounded}
  Note that \Cref{ex:lossofderivatives} also shows that
  the partial derivatives of order $n+1$ of a function $f \in \cA^{np(\al),1}(X)$, where $X \in \sH^\al(\R^d)$,
  which exist in $X^\o$ (since $n+1 \le n p(\al)$)
  are in general unbounded at the boundary $\p X$.
\end{remark}

There are closed fat sets $X \subseteq \R^d$ such that each $f \in \cA^\infty(X)$ has a $\cC^\infty$-extension to $\R^d$,
but the loss of derivatives cannot be expressed by an integer
$p$ such that $\cA^{np,1}(X) \subseteq \cC^{n}(X)$ for all $n$:

\begin{example}
  Let
  \[
    K(\al):= \ol {\Ga^{\al}_2(\tfrac{1}{2},1)} = \big\{(x_1,x_2) \in \R^2 : |x_1| \le \tfrac{1}{2},\, (2|x_1|)^\al \le x_2 \le 1\big\}
  \]
  be the truncated closed $\al$-cusp of radius $\frac{1}{2}$ and height $1$ in dimension $2$.
  Then
  \[
    X := \bigcup_{m\in \N_{\ge 1}} \big(m e_1 + K(\tfrac{1}{m})\big) \cup \big\{(x_1,x_2) \in \R^d : x_1 \ge 0, \, 1 \le x_2 \le 2\big\}
  \]
  is an infinite comb with sharper and sharper teeth.
  Each $f \in \cA^\infty(X)$ has a $\cC^\infty$-extension to $\R^2$
  which follows from \Cref{main:Aa}, since $X \cap B(0,n)$ is a H\"older set for each integer $n\ge 1$ and
  the respective extensions can be glued together by a partition of unity.
  On the other hand, we may infer from
  \Cref{ex:lossofderivatives} that there is no positive integer $p$ such that
  $\cA^{np,1}(X) \subseteq \cC^{n}(X)$ for all $n$.
\end{example}

\subsection{Degradation of the H\"older index}

For a $1$-set $X$, \Cref{main:A} yields $\cA^{2n,\be}(X) \subseteq \cC^{n,\frac{\be}{2}}(X)$ for all $n \in \N$ and $\be \in (0,1]$.
That division of the H\"older index by $2$ is optimal is seen by the following example.

\begin{example} \label{ex:degradation}
  Consider the halfspace $X := \{x \in \R^d : x_d \ge 0\}$.
  The function $f : X \to \R$, $x \mapsto (x_d)^{n+\frac{1}{2}}$, belongs to $\cC^{n,\frac{1}2}(X)$, but $f^{(n)}$ is not $\ga$-H\"older
  near $\p X$ for any $\ga>\frac{1}{2}$.
  On the other hand $f \in \cA^{2n,1}(X)$, by Glaeser's inequality \cite[Lemme I]{Glaeser63R}:
  \begin{equation} \label{eq:Glaeser}
     u'(t)^2 \le 2 u(t) \sup_{s \in \R} |u''(s)| \quad \text{ if }\quad u : \R \to [0,\infty).
  \end{equation}
  To see this it suffices check that $u^{n+\frac{1}{2}}$ is of class $\cC^{2n,1}$ if $u \in \cC^\infty(\R,[0,\infty))$; we may assume without loss of
  generality that $u$ has compact support.
  On the set $U:=\{t \in \R : u(t) \ne 0\}$ we may differentiate indefinitely: for $k\ge 1$ we find
  \[
    \p_t^k (u^{n+\frac{1}{2}}) = \sum_{j= 1}^k \sum_{\substack{\al_1 + \cdots + \al_j= k \\  \al_i>0}} C_{j,\al} u^{n-j+\frac{1}{2}} u^{(\al_1)} \cdots u^{(\al_j)},
  \]
  where $C_{j,\al}$ are numerical constants.
  We claim that for each $k \le 2n$ all summands on the right-hand side extend continuously by $0$ to the complement of $U$.
  This is clear for $k\le n$, because then the exponent $n-j+\frac{1}{2}$ is positive.
  If $n < j \le k \le 2n$, then $\al_1 + \cdots + \al_j = k$ implies that at least $2(j - n)$ among the $\al_i$ must equal $1$ so that
  \[
    u^{n-j+\frac{1}{2}} u^{(\al_1)} \cdots u^{(\al_j)}  = \frac{(u')^{2(j-n)}}{u^{j-n-\frac{1}{2}}} \cdot P 
    =u' \cdot \left( \frac{u'}{u^{\frac{1}{2}}} \right)^{2(j-n)-1} \cdot P,
  \]
  where $P$ is the product of the remaining factors. By \eqref{eq:Glaeser}, $u'/u^{\frac{1}{2}}$ is bounded and $u'$ vanishes on the complement of $U$.
  Thus $u^{n+\frac{1}{2}}$ is of class $\cC^{2n}$.
  For $k = 2n+1$ a similar argument shows that at least $2(j - n)-1$ among the $\al_i$ must equal $1$ so that all summands
  are globally bounded on $U$.
  That means that $v:= \p_t^{2k} (u^{n+\frac{1}{2}})$ is Lipschitz on each connected component of $U$ with uniform Lipschitz constant $L$.
  Since $v$ is zero on the complement of $U$, it follows that $v$ is Lipschitz on $\R$.
  Indeed, if $t_1 < t_2$ are not in the same component, take $s_1 \le s_2$ in $[t_1,t_2]$ such that
  $s_i$ is an endpoint of the component of $t_i$ if $t_i \in U$ and $s_i = t_i$ otherwise.
  Thus, $v(s_i)=0$ for $i =1,2$ and
  \[
    |v(t_2) - v(t_1)| \le |v(t_2) - v(s_2)| + |v(s_1) - v(t_1)| \le L(t_2-s_2) + L(s_1- t_1) \le L(t_2-t_1).
  \]
\end{example}

\subsection{Locally finite unions of $\al$-sets}

The conclusion of \Cref{main:A} can be (partially) extended to locally finite unions of $\al$-sets if
the overlaps of the pieces are not too ``thin''. Intersections do in general not preserve the conclusion as
is shown by the infinitely flat cusp \cite[Example 10.4]{Rainer18}.

\begin{theorem} \label{thm:union}
  Let $X \subseteq \R^d$ be a locally finite union of $\al$-sets $X_j$ such that:
  \begin{enumerate}
    \item[($\star$)] If $x \in \p X$ and $x \in X_i \cap X_j$, then there exists a non-empty $\al$-set $Y$ such that
    $x \in Y \subseteq X_i \cap X_j$.
  \end{enumerate}
  Then $\cA^{n p(\al), \be}(X) \subseteq \cC^{n}(X)$ for all $\be \in (0,1]$ and $n \in \N$.
  If, additionally,
  \begin{enumerate}
    \item[($\star\star$)] $X$ is $m$-regular,
  \end{enumerate}
  then $\cA^{n p(\al), \be}(X) \subseteq \cC^{n, \frac{\al \be}{2q(\al) m}}(X)$.
  Note that $\be$ may be replaced by a general modulus of continuity $\om$.
\end{theorem}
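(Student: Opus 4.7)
The plan is to apply \Cref{main:A} piece-wise to each $X_j$, patch the resulting derivatives together using $(\star)$ on overlaps, and, for the H\"older conclusion, propagate the per-piece estimate along rectifiable paths furnished by $m$-regularity.

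\textbf{Extension to $\cC^n(X)$.} Every $\cC^\infty$-curve in $X_j$ is a $\cC^\infty$-curve in $X$, so $f|_{X_j}\in\cA^{np(\al),\be}(X_j)$ and \Cref{main:A} yields $f|_{X_j}\in\cC^{n,\frac{\al\be}{2q(\al)}}(X_j)$, with continuous extensions of the Fr\'echet derivatives $(f|_{X_j^\o})^{(k)}$, $k\le n$, to $X_j$. To patch these into a global $f^{(k)} : X \to L^k(\R^d,\R)$ I check agreement on overlaps: at $x\in X^\o$ both values coincide with the ordinary derivative of $f|_{X^\o}$, while at $x\in(\p X)\cap X_i\cap X_j$ condition $(\star)$ furnishes an $\al$-set $Y\subseteq X_i\cap X_j$ with $x\in Y$. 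Since $Y$ is fat, $Y^\o\subseteq X_i^\o\cap X_j^\o\subseteq X^\o$ is non-empty, and applying \Cref{main:A} to $f|_Y$ shows that the continuous extensions from $X_i$ and $X_j$ at $x$ both equal the common limit $\lim_{y\to x,\,y\in Y^\o}f^{(k)}(y)$. Thus $f^{(k)}$ is well-defined on $X$; the pasting lemma for the locally finite closed cover $\{X_j\}$ then gives $f\in\cC^n(X)$.

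\textbf{H\"older estimate.} Assume $X$ is $m$-regular and set $\gamma:=\frac{\al\be}{2q(\al)}$. Fix a compact neighborhood $K\subseteq X$ satisfying $m$-regularity with constant $D$; by local finiteness only finitely many pieces $X_{j_1},\ldots,X_{j_M}$ meet $K$, and \Cref{main:A} yields a uniform H\"older constant $H$ for $f^{(n)}|_{X_{j_i}\cap K}$ with exponent $\gamma$. For nearby $x,y\in K$ take a rectifiable path $\ga:[0,L]\to K$ with $L\le D|x-y|^{1/m}$, partition $[0,L]$ into $N$ sub-intervals $[t_{k-1},t_k]$ each mapped by $\ga$ into a single piece, and set $z_k:=\ga(t_k)$. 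Since the chord lengths satisfy $\sum_k|z_{k-1}-z_k|\le L$, the concavity of $t\mapsto t^\gamma$ gives
\[
|f^{(n)}(x)-f^{(n)}(y)|\le H\sum_{k=1}^N|z_{k-1}-z_k|^\gamma\le HN^{1-\gamma}L^\gamma\le HN^{1-\gamma}D^\gamma|x-y|^{\gamma/m},
\]
so provided $N$ is uniformly bounded by some $N_0=N_0(K)$, one obtains the required exponent $\gamma/m=\frac{\al\be}{2q(\al)m}$. The extension to a general modulus of continuity $\om$ is formal, replacing per-piece H\"older estimates by the modulus $\widetilde\om(t)=\om(t^{\al/(2q(\al))})$ from \Cref{main:A}.

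\textbf{Main obstacle.} The delicate point is the uniform bound $N\le N_0$: a priori the rectifiable path $\ga$ could oscillate between two pieces of the cover arbitrarily often, so that a finite partition respecting pieces might not exist, or might have cardinality growing with $|x-y|$. This is where $(\star)$ is decisive: at any transition point in $\p X$ the overlap $X_i\cap X_j$ contains an $\al$-set, through which the path can be re-routed locally, and successive excursions into the same ordered pair of pieces can then be collapsed. A combinatorial argument leveraging the local finiteness of the cover should then bound $N$ in terms of $M$ alone, with the re-routing controllable by $m$-regularity so as not to blow up path length beyond the $D|x-y|^{1/m}$ budget. I expect this step to require the most care, while the remaining arguments are routine given \Cref{main:A}.
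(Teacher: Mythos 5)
Your overall strategy coincides with the paper's: apply \Cref{main:A} piecewise, use ($\star$) to identify the boundary values of the derivatives on overlaps, and chain per-piece H\"older estimates along a rectifiable path supplied by $m$-regularity. The first part (the $\cC^{n}$ conclusion) is correct and is essentially the paper's argument.

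The problem is the step you yourself flag as the ``main obstacle'': the uniform bound on the number $N$ of segments. This is a genuine gap as written, and the resolution you sketch --- re-routing the path through the $\al$-sets $Y\subseteq X_i\cap X_j$ furnished by ($\star$) and collapsing excursions combinatorially --- is both unnecessary and shaky: ($\star$) gives no quantitative control on where $Y$ sits or on the length of a re-routed path, so keeping within the $D|x-y|^{1/m}$ budget is not clear. The observation you are missing is that the partition need not ``respect pieces'' in the sense that each subarc of the path lies in a single $X_j$; you only need consecutive selected points to lie in a \emph{common} piece, because the conclusion of \Cref{main:A} --- that $f^{(n)}|_{X_j\cap K}$ is $\ga$-H\"older with constant $H$ --- applies to an \emph{arbitrary} pair of points of $X_j\cap K$, regardless of how the path wanders between them. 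So select the points greedily: having arrived at a point lying in some piece $X_{j_k}$, take as the next point the value of the path at the \emph{last} parameter at which it meets $X_{j_k}$ (this exists since $X_{j_k}$ is closed). After that time the path never returns to $X_{j_k}$, so each of the finitely many pieces meeting $K$ serves as a bridge at most once and $N$ is bounded by their number; all visits to a given piece are collapsed into a single jump from first entry to last exit. Since all selected points lie on the path, each chord has length at most $\ell\le D|x-y|^{1/m}$, whence $\|f^{(n)}(x)-f^{(n)}(y)\|_{L^n(\R^d,\R)}\le H N\,\ell^{\ga}\le H N D^{\ga}|x-y|^{\ga/m}$. This is exactly how the paper argues (compare the chamber argument around \eqref{eq:symmetric}); condition ($\star$) plays no role in the H\"older step, only in the well-definedness of the derivatives on $\p X$.
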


\begin{proof}
  Let $f \in \cA^{n p(\al), \be}(X)$.
  Then $f|_{X_j} \in \cA^{n p(\al), \be}(X_j)$ for each $j$.
  Since $X_j$ is an $\al$-set, we have $f|_{X_j} \in \cC^n(X_j)$ for each $j$, by \Cref{main:A}.
  It remains to check that the derivatives up to order $n$ of $f|_{X_j}$ and $f|_{X_i}$
  for $j \ne i$ coincide at points $x \in \p X$
  which belong to $X_j \cap X_i$.
  But that follows from condition ($\star$), since the derivatives are uniquely determined by the restriction $f|_Y$.
  Note that ($\star$) implies that $X$ is simple.

  Fix $a \in X$. By ($\star\star$), there is a compact neighborhood $K \subseteq X$ of $a$
  such that any two points $x,y$ in $K$ can be joined by a rectifiable path $c$ in $K$ with
  \[
    \ell(c) \le C |x-y|^{1/m}.
  \]
  There is only a finite number of $X_j$ with $K \cap X_j \ne \emptyset$.
  Let $H$ be the maximum of the $\ga:= \frac{\al \be}{2q(\al)}$-H\"older constants of $f^{(n)}|_{X_j}$; see \Cref{main:A}.
  Then we find a sequence of points $x_0 :=x,x_1,\ldots,x_k:=y$ on $c$
  such that any two consecutive points $x_i,x_{i+1}$ belong to the same $X_j$
  and these two are the only points in the sequence that are contained in $X_j$.
  Thus
  \begin{align*}
     \|f^{(n)}(x) - f^{(n)}(y)\|_{L^n(\R^d,\R)} &\le \sum_{i=0}^{k-1} \|f^{(n)}(x_i) - f^{(n)}(x_{i+1})\|_{L^n(\R^d,\R)}
     \\
     &\le H \sum_{i=0}^{k-1} |x_i - x_{i+1}|^{\ga} \le H k\, \ell(c)^{\ga} \le C^\ga H k\, |x-y|^{\frac{\ga}{m}}.
  \end{align*}
  The reasoning for an arbitrary modulus of continuity is analogous.
\end{proof}

Without an additional condition such as ($\star\star$) we cannot expect that the derivatives of any order satisfy a H\"older condition
as is seen by the following example.

\begin{example}
 \label{ex:complementflatcusp}
 Cf.\ \cite[Example 10.9]{Rainer18} and \cite[Example 2.18]{Bierstone80a}.
    Let $X$ be the complement in $\R^2$ of the flat cusp $\{(x,y) \in \R^2 : x>0,\, |y| < e^{-1/x^2}\}$.
    It was observed in \cite[Example 10.9]{Rainer18} that $\cA^\infty(X) = \cC^\infty(X)$.
    Indeed, we have $\cA^{2n, \be}(X) \subseteq \cC^{n}(X)$ for all $n \in \N$ and $\be \in (0,1]$, by \Cref{thm:union},
    since $X$ is the union of the two $1$-sets
    \[
  		X_\pm := \big\{(x,y) \in \R^2 : x>0,\, \pm y \ge e^{-1/x^2} \big\} \cup \big\{(x,y) \in \R^2 : x\le 0\big\},
  	\]
    and $X_+ \cap X_- = \{(x,y) \in \R^2 : x\le 0 \}$ is also a $1$-set.

    Consider the function $f : X \to \R$ defined by
    \[
      f(x,y) :=
      \begin{cases}
          e^{-1/x} & \text{ if } x>0,\, y \ge e^{-1/x^2},
          \\
          e^{-2/x} & \text{ if } x>0,\, y \le - e^{-1/x^2},
          \\
          0 & \text{ if } x\le 0.
      \end{cases}
    \]
    Then $f \in \cA^\infty(X) = \cC^\infty(X)$.
    But
    \begin{align*}
          \frac{|f(x,e^{-1/x^2}) - f(x,-e^{-1/x^2})|}{|(x,e^{-1/x^2}) - (x,-e^{-1/x^2})|^\ga}
          =\frac{e^{-1/x}(1 - e^{-1/x})}{2^\ga e^{-\ga/x^2}} \to \infty \quad \text{ as } x\searrow 0,
    \end{align*}
    that is,
    $f$ is not $\ga$-H\"older near $0 \in X$ for any $\ga \in (0,1]$.
    Since $\p_x^{n} f(x,e^{-1/x^2}) = p_1(\frac{1}{x}) e^{-1/x}$ and $\p_x^{n} f(x,-e^{-1/x^2}) = p_2(\frac{1}{x}) e^{-2/x}$ for polynomials $p_1,p_2 \in \R[x]$,
    we may likewise conclude that
    $\p_x^{n} f$ is not $\ga$-H\"older near $0 \in X$ for any $\ga \in (0,1]$.
    In particular, $f$ is not the restriction to $X$ of a $\cC^\infty$-function on $\R^2$.
\end{example}

\subsection{Other building blocks}
One might be tempted to consider smooth curves defined on $\R_+ := [0,\infty)$ as basic building blocks.
Given $X \subseteq \R^d$
let $\cC^\infty(\R_+,X)$ be the set of all curves $c : \R_+ \to \R^d$ with $c(\R_+) \subseteq X$ that have
a $\cC^\infty$-extension $\tilde c : \R \to \R^d$ (we do not require $\tilde c(\R) \subseteq X$)
and set
\[
  \cA^{k,\be}_+(X) := \big\{f : X \to \R : f_* \cC^\infty(\R_+,X) \subseteq \cC^{k,\be}(\R_+,\R) \big\}.
\]
By definition, $\cA^{k,\be}_+(\R_+) = \cC^{k,\be}(\R_+)$ and thus $\cA^{k,\be}_+(\R_+) \ne \cA^{k,\be}(\R_+)$, by \Cref{ex:degradation}.
The notions are however not too far apart:

\begin{proposition}
  Let $k \in \N$, $\be \in (0,1]$,
  and $X \subseteq \R^d$ arbitrary.
  Then:
  \begin{enumerate}
    \item $\cA^{k,\be}_+(X) \subseteq \cA^{k,\be}(X)$ and $\cA^{2k,\be}(X) \subseteq \cA^{k,\frac{\be}2}_+(X)$.
    \item $\cA^{\infty}_+(X) := \bigcap_{k \in \N} \cA^{k,\be}_+(X) = \cA^{\infty}(X)$.
    \item If $X \subseteq \R^d$ is open, then $\cA^{k,\be}_+(X) = \cA^{k,\be}(X)$.
  \end{enumerate}
  Analogous versions hold if $\be$
  is replaced by a general modulus of continuity.
\end{proposition}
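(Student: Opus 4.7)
The plan is to prove part (1) first; then (2) follows at once by intersecting the two inclusions of (1) over all $k$, and in (3) the non-trivial direction $\cA^{k,\be}(X) \subseteq \cA^{k,\be}_+(X)$ for open $X$ is obtained by combining (1)(i) with the Boman--Faure theorem. Specifically, $\cA^{k,\be}(X) = \cC^{k,\be}(X)$ on open $X$, and for $c \in \cC^\infty(\R_+,X)$ with smooth extension $\tilde c \in \cC^\infty(\R,\R^d)$ the openness of $X$ provides an open set $U \supseteq \R_+$ with $\tilde c(U) \subseteq X$, so that $f \o \tilde c \in \cC^{k,\be}(U)$ by the standard chain rule and its restriction to $\R_+$ is $f \o c$.

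The first inclusion of (1), namely $\cA^{k,\be}_+(X) \subseteq \cA^{k,\be}(X)$, is handled by a translation trick. For $f \in \cA^{k,\be}_+(X)$ and $c \in \cC^\infty(\R,X)$, for every $t_0 \in \R$ the curve $s \mapsto c(s+t_0)$ viewed on $\R_+$ lies in $\cC^\infty(\R_+,X)$ (with obvious smooth extension to $\R$), hence $s \mapsto f(c(s+t_0))$ is $\cC^{k,\be}$ on $\R_+$, that is $f \o c$ is $\cC^{k,\be}$ on $[t_0,\infty)$. Since $t_0$ is arbitrary, $f \o c \in \cC^{k,\be}(\R,\R)$.

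The second inclusion $\cA^{2k,\be}(X) \subseteq \cA^{k,\be/2}_+(X)$ is the main content. Given $f \in \cA^{2k,\be}(X)$ and $c \in \cC^\infty(\R_+,X)$ with smooth extension $\tilde c$, the curve $\gamma(t) := \tilde c(t^2)$ is smooth on $\R$ and takes values in $X$ (because $t^2 \in \R_+$ and $\tilde c|_{\R_+} = c$), so $v(t) := f(\gamma(t)) = u(t^2)$ is $\cC^{2k,\be}(\R)$ and plainly even, where $u := f \o c$. The proposition therefore reduces to the following lemma, which is the main obstacle: \emph{if $v \in \cC^{2k,\be}(\R)$ is even, then $u(s) := v(\sqrt s)$ belongs to $\cC^{k,\be/2}(\R_+)$.}

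To establish this lemma I would subtract off the Taylor polynomial: since $v$ is even one can write $v(t) = P(t^2) + R(t)$ with $P$ a polynomial of degree $k$ and $R \in \cC^{2k,\be}(\R)$ vanishing to order $2k$ at $0$, so that $|R^{(i)}(t)| \le C|t|^{2k+\be-i}$ for $0 \le i \le 2k$ and small $|t|$. Then $u(s) = P(s) + r(s)$ with $r(s) := R(\sqrt s)$, and an iterated chain rule (Fa\`a di Bruno) gives
\[
   r^{(j)}(s) = \sum_{i=1}^{j} c_{j,i}\, s^{(i-2j)/2}\, R^{(i)}(\sqrt s), \qquad j \ge 1.
\]
Inserting the decay estimate collapses every summand to the uniform bound $|r^{(j)}(s)| \le C s^{k-j+\be/2}$ for $j \le k$, and analogously $|r^{(k+1)}(s)| \le C s^{\be/2 - 1}$ when $k \ge 1$. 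The first bound shows that $r^{(j)}(s) \to 0$ as $s \to 0$ for $j < k$, which by successive integration yields $r \in \cC^k$ up to the boundary with $r^{(j)}(0) = 0$; it also provides the H\"older estimate $|r^{(k)}(s) - r^{(k)}(0)| \le C s^{\be/2}$. The second bound, together with the elementary inequality $s'^{\be/2} - s^{\be/2} \le (s'-s)^{\be/2}$, gives the H\"older condition for $r^{(k)}$ on pairs $0 < s < s'$ near $0$; smoothness of $r$ away from $0$ handles the remaining compacta. (The case $k=0$ is immediate from $|\sqrt{s}-\sqrt{s'}| \le |s-s'|^{1/2}$.) The general modulus variant goes through unchanged with $\widetilde\om(t) := \om(\sqrt t)$, which is itself a modulus of continuity since $\sqrt{t_1+t_2} \le \sqrt{t_1} + \sqrt{t_2}$ combined with the subadditivity of $\om$ yields subadditivity of $\widetilde\om$.
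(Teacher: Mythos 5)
Your argument is correct, and for the crucial inclusion $\cA^{2k,\be}(X)\subseteq\cA^{k,\be/2}_+(X)$ it takes a genuinely different route from the paper. The paper observes that $f\o c\in\cA^{2k,\be}(\R_+)$ (by composing with \emph{arbitrary} smooth curves $\ga:\R\to\R_+$) and then simply invokes Theorem~A for the $1$-set $\R_+$, whose proof rests on the general curve lemma and a contradiction argument. You instead test against the single curve $\ga(t)=t^2$, reduce to the lemma that an even $v\in\cC^{2k,\be}(\R)$ yields $v(\sqrt{\cdot})\in\cC^{k,\be/2}(\R_+)$, and prove that lemma by hand via the Taylor decomposition $v=P(\cdot^2)+R$, the flatness estimate $|R^{(i)}(t)|\le C|t|^{2k+\be-i}$, and the Fa\`a di Bruno expansion of $\p_s^j R(\sqrt s)$. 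This is more elementary and self-contained (it does not presuppose the paper's Theorem~A and uses only one test curve), at the cost of an explicit computation; the paper's route is shorter given that Theorem~A is already available. Your treatments of the first inclusion of (1) (translation trick), of (2), and of (3) coincide in substance with the paper's.

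One small caveat: for a general modulus of continuity the argument does not go through literally ``unchanged.'' The bound $|r^{(k+1)}(\sigma)|\le C\sigma^{-1}\om(\sqrt\sigma)$ no longer integrates to $\widetilde\om(s')-\widetilde\om(s)$, so the step $\int_s^{s'}\sigma^{\be/2-1}\,d\sigma=\tfrac2\be\bigl(s'^{\be/2}-s^{\be/2}\bigr)\le\tfrac2\be(s'-s)^{\be/2}$ needs a replacement. It is routine but should be said: when $s'-s\ge s'/2$ use the direct bound $|r^{(k)}(\sigma)|\le C\om(\sqrt\sigma)$ on both endpoints, and when $s'-s<s'/2$ use $\int_s^{s'}|r^{(k+1)}|\le C\,\tfrac{\om(\sqrt{s'})}{s'}(s'-s)\le C'\om(\sqrt{s'-s})$, which follows from the almost-monotonicity $\om(\tau)/\tau\le 2\,\om(\tau')/\tau'$ for $\tau'\le\tau$, itself a consequence of subadditivity.
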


\begin{proof}
  (1)
  Suppose that $f \in \cA^{k,\be}_+(X)$ and let us show $f \in \cA^{k,\be}(X)$.
  For any $\cC^\infty$-curve $c$ in $X$ the curves
  $c_+:= c|_{\R_+}$ and $c_{-}(t) := c(-t)$ for $t\ge 0$ belong to $\cC^\infty(\R_+,X)$. Then
  \[
  	(f\o c)(t) =
  	\begin{cases}
  		(f \o c_+)(t) & \text{ if } t \ge 0,
  		\\
  		(f \o c_-)(-t) & \text{ if } t \le 0,
  	\end{cases}
  \]
  is of class $\cC^{k,\be}$ off $0$.
  To see that $f\o c$ is of class $\cC^{k,\be}$ at $0$ observe that
  $c_1(t) := c(t-1)$ for $t \ge 0$ belongs to $\cC^\infty(\R_+,X)$ and
  \[
  	(f\o c)(t) = (f\o c_1)(t+1) \quad \text{ for } t \ge -1.
  \]

  To prove the second inequality in (1) we take an arbitrary $f \in \cA^{2k,\be}(X)$ and $c \in \cC^\infty(\R_+,X)$ and show that $f \o c \in \cC^{k,\frac{\be}2}(\R_+,\R)$.
  For any $\cC^\infty$-curve $\ga : \R \to \R_+$ the composite
  \[
  	(f \o c) \o \ga = f \o (c \o \ga)
  \]
  is of class $\cC^{2k,\be}$, i.e., $f \o c \in \cA^{2k,\be}(\R_+)$.
  Thus $f\o c \in \cC^{k,\frac{\be}2}(\R_+)$,
  by \Cref{main:A}.

  (2) is a direct consequence of (1).

  (3) In view of (1), it suffices to show $\cA^{k,\be}(X) \subseteq \cA^{k,\be}_+(X)$.
  If $X$ is open, then $\cA^{k,\be}(X) = \cC^{k,\be}(X)$ so that the desired inclusion
  follows from the fact that the composite of a $\cC^{k,\be}$-function with a $\cC^\infty$-curve
  is a $\cC^{k,\be}$-function.
\end{proof}

\section{Arc-differentiable functions on definable sets} \label{sec:definable}

In this section, we will study arc-differentiable functions on sets that are definable in
polynomially bounded o-minimal expansions of the real field.
We shall prove \Cref{main:B} and \Cref{main:C}.

\subsection{Polynomially bounded o-minimal expansions of the real field}

We recall the definition of an \emph{o-minimal structure} over the real (ordered) field and some background;
cf.\ \cite{vandenDriesMiller96} and \cite{vandenDries98}.

A \emph{structure} $\sS = (\sS_d)_{d\ge 1}$ over the real (ordered) field $(\R,+,\cdot)$ is a sequence, where
each $\sS_d$ is a collection of subsets of $\R^d$, such that for all $d,d'\ge 1$:
\begin{itemize}
  \item $\sS_d$ is a boolean algebra with respect to the usual set-theoretic operations.
  \item $\sS_d$ contains all semialgebraic subsets of $\R^d$.
  \item If $X \in \sS_{d}$ and $X' \in \sS_{d'}$, then $X \times X' \in \sS_{d+d'}$.
  \item If $d \ge d'$ and $X \in \sS_{d}$, then $\pi(X) \in \sS_{d'}$, where $\pi : \R^{d}\to \R^{d'}$
  is the projection on the first $d'$ coordinates.
\end{itemize}
A subset $X \subseteq \R^d$ is said to be \emph{definable} in the structure $\sS$ if $X \in \sS_d$.
A map $f : X \to \R^{d'}$ is called \emph{definable} in $\sS$ if its graph is definable.
A structure $\sS$ is called \emph{o-minimal} if
\begin{itemize}
  \item the boundary of every set in $\sS_1$ is finite.
\end{itemize}

A structure $\sS$ is called \emph{polynomially bounded} if for every function $f : \R \to \R$ that is definable in $\sS$
there exists $N \in \N$ such that $f(t) = O(t^N)$ as $t \to \infty$.
An o-minimal structure $\sS$ either is polynomially bounded or the exponential function $\exp : \R \to \R$ is definable in $\sS$
(see \cite{Miller:1994ue}).

Here are a few examples of o-minimal structures relevant for this paper:
\begin{enumerate}
  \item The collection of all semialgebraic sets in $\R^d$ for $d \ge 1$ is a polynomially bounded o-minimal structure.
  \item The family of globally subanalytic sets in $\R^d$ for $d \ge 1$ is a polynomially bounded o-minimal structure.
  It is the smallest structure over $(\R,+,\cdot)$ containing all restricted analytic functions
  $f : \R^d \to \R$, i.e., $f|_{[-1,1]^d}$ is analytic and $f=0$ outside $[-1,1]^d$.
  It is denoted by $\R_{\text{an}} := (\R,+,\cdot,(f)_{f \text{ restricted analytic}})$.
  \item The expansion $\R^\R_{\text{an}} := (\R_{\text{an}},(x^r)_{r \in \R})$ of $\R_{\text{an}}$
  by all real powers $x^r : \R \to \R$, $t \mapsto t^r$ if $t >0$ and $t\mapsto 0$ if $t\le 0$,
  is a polynomially bounded o-minimal structure.
  \item The expansion $\R_{\text{an},\exp} := (\R_{\text{an}},\exp)$ of $\R_{\text{an}}$ by
  the unrestricted exponential function $\exp : \R \to \R$ is an o-minimal structure which is not polynomially bounded.
\end{enumerate}

In this paper, we will be concerned only with polynomially bounded o-minimal structures.
In fact, if the exponential function is definable, then infinitely flat cusps are definable and on such arc-differentiable functions need not
be of class $C^1$; see \cite[Example 10.4]{Rainer18}.

\emph{From now on we suppose that $\sS$ is an arbitrary polynomially bounded o-minimal structure over the real field.
If we say that a set or a map is definable, we mean definable in $\sS$ (unless stated otherwise).
}

\subsection{{\L}ojasiewicz inequality}
Cf.\ \cite[4.14]{vandenDriesMiller96}.
Let $f,g : X \to \R$ be continuous definable functions on a compact definable set $X$ such that $f^{-1}(0) \subseteq g^{-1}(0)$.
Then there exist constants $N,C>0$ such that
\begin{equation} \label{eq:Lojasiewicz}
   |g(x)|^N \le C |f(x)|, \quad  x \in X.
\end{equation}

\subsection{Whitney regularity} \label{sec:Wregular}
Cf.\ \cite[4.15]{vandenDriesMiller96}.
Let $X \subseteq \R^d$ be a compact connected definable set.
There exist $r,C>0$ and a definable map $\ga : X^2 \times [0,1] \to X$ such that, for all $x,y \in X$,
$[0,1] \ni t \mapsto \ga(x,y,t) \in X$ is a rectifiable path from $x$ to $y$ of length $\le C |x-y|^r$.

\subsection{Quasiconvex decomposition}
Any definable set $X \subseteq \R^d$ can be decomposed into a finite disjoint union $X = \bigcup_j X_j$ of
$M$-quasiconvex sets $X_j$,
where $M$ depends only on the dimension $d$; see \cite[Theorem 1.2]{KurdykaParusinski06}.
Here a set $Y \subseteq \R^d$ is called \emph{$M$-quasiconvex} if any two points $y_1,y_2 \in Y$ can be joined
in $Y$ by a piecewise smooth path of length at most $M |y_1 - y_2|$. (Note that for this result the underlying o-minimal structure
need not necessarily be polynomially bounded.)

\begin{lemma} \label{lem:quasiconvex}
	Let $X \subseteq  \R^d$ be a fat closed definable set. Let $x \in \p X$ and suppose there is a
	basis of neighborhoods $\sU$ of $x$ such that $U \cap X^\o$ is connected for all $U \in \sU$.
	For all $U \in \sU$ and
  any two points $y,z \in U \cap X^\o$, there exists a
	rectifiable path $\ga$ in $X^\o$ from $y$ to $z$ of length
	\[
		\ell(\ga) \le C  \on{diam}(U).
	\]
\end{lemma}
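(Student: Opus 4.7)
The plan is to combine the Kurdyka--Parusi\'nski quasi-convex decomposition with the connectedness hypothesis on $U \cap X^\o$. First I would reduce to a \emph{definable} open connected subset $W$ satisfying $\{y,z\} \subseteq W \subseteq U \cap X^\o$ and $\on{diam}(W) \le \on{diam}(U)$: since $U \cap X^\o$ is open and connected it is path-connected, so any continuous path from $y$ to $z$ in $U \cap X^\o$ can be covered by finitely many open balls contained in $U \cap X^\o$, and their union is a semialgebraic (hence definable) open connected set with the required properties.

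Next I apply the quasi-convex decomposition to the definable set $W$, obtaining $W = W_1 \sqcup \cdots \sqcup W_N$ with each $W_j$ being $M$-quasiconvex for $M = M(d)$. Form the adjacency graph $G$ on $\{W_1,\ldots,W_N\}$ by declaring $W_i \sim W_j$ whenever $W_i \cap \overline{W_j} \ne \emptyset$ or $\overline{W_i} \cap W_j \ne \emptyset$. The connectedness of $W$ forces $G$ to be connected: otherwise the vertex set splits as $A \sqcup B$ with no edges across, and then $W_A := \bigcup_{i \in A} W_i$ and $W_B := \bigcup_{j \in B} W_j$ are disjoint, non-empty, and each relatively closed in $W$ (because $\overline{W_A} \cap W_B = \bigcup_{i \in A, j \in B} \overline{W_i} \cap W_j = \emptyset$, and symmetrically), contradicting connectedness.

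Then I choose a simple path in $G$ from the piece containing $y$ to the piece containing $z$, producing a chain of pairwise distinct pieces $W_{i_0},\ldots,W_{i_m}$ with $m+1 \le N$. For each consecutive pair $\{W_{i_k},W_{i_{k+1}}\}$, by the edge condition I can pick a transition point $p_k$ lying in one of the two pieces (hence in $W$) and in the closure of the other; since $W$ is open, I can then choose a companion point $q_k$ in the other piece so close to $p_k$ that the segment $[p_k,q_k]$ lies in $W$. Concatenating $M$-quasiconvex paths within each $W_{i_k}$ (joining the natural endpoints among $y$, the $p_k$'s and $q_k$'s, and $z$) with the short straight-line segments $[p_k,q_k]$ produces a rectifiable curve $\ga \subseteq W \subseteq X^\o$ from $y$ to $z$. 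Each quasi-convex segment has length at most $M \on{diam}(W_{i_k}) \le M \on{diam}(U)$ and the $m$ transition segments can be made arbitrarily short, yielding $\ell(\ga) \le C \on{diam}(U)$ with $C = NM + 1$.

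The main obstacle is keeping the curve inside $X^\o$ rather than merely in its closure: the disjoint pieces of the decomposition may a priori share boundary points only in $\overline{W}$ or outside of $W$, and quasi-convex paths living in separate pieces cannot be concatenated at such points. The crucial role of the simplicity hypothesis is that connectedness of $W = U \cap X^\o$ forces adjacent pieces to ``touch'' at points actually lying in the open set $W$, and the openness of $W$ then allows the short straight-line transitions, and hence the full curve $\ga$, to be drawn inside $X^\o$.
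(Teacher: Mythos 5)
Your chaining mechanism (adjacency graph on the quasiconvex pieces, transition points forced into the open connected set, short straight segments across the open set) is essentially the same device the paper uses, but there is a genuine gap in \emph{where} you apply the Kurdyka--Parusi\'nski decomposition. You decompose an auxiliary set $W$ built from $y$, $z$ and $U$, and your final constant is $C=NM+1$ with $N$ the number of quasiconvex pieces of $W$. The decomposition theorem of \cite{KurdykaParusinski06} controls only the quasiconvexity constant $M$ in terms of $d$; the \emph{number} of pieces depends on the set being decomposed, and your $W$ (a union of finitely many balls, with no control on how many) changes with $U$, $y$, $z$. Hence your $C$ depends on $U$, $y$ and $z$. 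But the whole content of the lemma is the uniformity of $C$: with a $U$-dependent constant the statement is vacuous, since any two points of the open connected set $U\cap X^\o$ can be joined by \emph{some} rectifiable path and one may then set $C:=\ell(\ga)/\on{diam}(U)$. In the application (proof of \Cref{main:B}) one lets $U$ shrink to $x$ and needs $\ell(\ga)\le C\,\on{diam}(U)\to 0$, which requires $C$ bounded independently of $U$.

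The repair is exactly the paper's choice: apply the quasi-convex decomposition once and for all to the fixed definable set $X^\o$, so that the number of pieces $A_j$ is a constant attached to $X$. Then take a path $\si$ from $y$ to $z$ in $U\cap X^\o$, choose finitely many points $z_i$ on it with small balls $B_i\subseteq U\cap X^\o$ so that consecutive balls are linked by a common piece $A_{j_i}$ with all $A_{j_i}$ pairwise distinct; the number of links is then bounded by the total number of pieces, and each quasiconvex link has length at most $M(\on{diam}(U)+2\ep)$. (These links may leave $U$, which is harmless since the lemma only requires $\ga\subseteq X^\o$.) The resulting $C$ depends only on $M$ and the number of pieces of the global decomposition. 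Your adjacency-graph argument would go through with this global decomposition in place of the decomposition of $W$; the rest of your write-up (path-connectedness of $U\cap X^\o$, touching points lying in the open set, short transition segments) is sound.
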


\begin{proof}
   We only sketch the argument;
   details can be found in the proof of \cite[Lemma 5.9]{Rainer18}.
   There is a finite disjoint decomposition of $X^\o$ into
   $M$-quasiconvex sets $A_j$. Fix $U \in \sU$ and $y,z \in U \cap X^\o$.
   There is a path $\si : [0,1] \to U \cap X^\o$ with $\si(0)=y$ and $\si(1)=z$.
   We find a finite partition $0 = t_0 < t_1 < \cdots < t_{h-1} < t_h = 1$ of $[0,1]$
   such that the points $z_i := \si(t_i)$ have the following properties:
   If $\ep>0$ is such that the balls $B_i = B(z_i,\ep)$ are contained in $U \cap X^\o$,
   then for each $0 \le i \le h-1$ there is a piece $A_{j_i}$ which intersects
   $B_i$ and $B_{i+1}$, and all pieces in the list $A_{j_0},A_{j_1},\ldots$
   are different.
   Using that the sets $A_j$ are $M$-quasiconvex, it is now easy to find a
   rectifiable path $\ga$ in $X^\o$ from $y$ to $z$ of length at most
    $C \on{diam}(U)$, where $C$ depends only on $M$ and the number of pieces $A_j$.
\end{proof}

\subsection{Uniformly polynomially cuspidal sets} \label{sec:UPC}

Recall the definition of a closed UPC set $X \subseteq \R^d$ from the introduction:
there exist positive constants $M,m$ and a positive integer $N$ such that
for each $x \in X$ there is a polynomial curve $h_x : \R \to \R^d$ of degree at most $N$
satisfying
\begin{enumerate}
  \item $h_x(0)=x$,
  \item $\on{dist}(h_x(t),\R^d \setminus X) \ge M t^m$ for all $x \in X$ and $t \in [0,1]$.
\end{enumerate}
In particular, $h_x((0,1]) \subseteq X^\o$. 
Note that a UPC set is necessarily fat. All H\"older sets $X \in \sH(\R^d)$ are UPC, which is an easy consequence of the definition.
O-minimal structures that are not polynomially bounded contain sets (e.g.\ infinitely flat cusps) that are not UPC.
Also polynomially bounded o-minimal structures may contain fat sets that are not UPC.
For instance, $X=\{(x,y) \in \R^2 : 0\le x^{\sqrt 2} \le y \le x^{\sqrt 2} + x^2\}$ is definable in $\R_{\text{an}}^\R$,
but every $\cC^\infty$-curve in $X$ through the origin must vanish to infinite order (see \Cref{ex:irrational}).
It was shown in \cite{Pierzchal-a:2005uk} that a compact subset $X$ of $\R^2$ definable in some polynomially
bounded o-minimal structure is UPC if and only if it is fat and
for each $x \in X$, each $r>0$, and each connected component $S$ of $X^\o \cap B(x,r)$ with $x \in \ol S$
there is a polynomial curve $c : (0,1) \to S$ such that $c(t) \to x$ as $t \to 0$.

On the other hand there are lots of examples of UPC sets in $\R^d$ (besides H\"older sets):
\begin{itemize}
  \item Fat compact subanalytic sets; cf.\ \cite[Corollary 6.6]{PawluckiPlesniak86}.
  \item Fat compact sets definable in $\R_\cQ$, the o-minimal expansion of the real field by restricted functions in a suitable
  quasianalytic class $\cQ$; cf.\ \cite{Pierzchal-a:2005uk}.
  \item Fat compact sets definable in a certain substructure of the structure generated by generalized power series; cf.\
  \cite{Pierzchal-a:2012uv}.
\end{itemize}

\subsection{Smooth rectilinearization} \label{ass:smoothrect}

Let $X \subseteq \R^d$ be a fat compact definable set.
We say that \emph{$X$ admits smooth rectilinearization} if
there is a finite number of  definable $\cC^\infty$-maps $\ps_j : \R^d \to \R^d$
  such that
  \[
    \ps_j((-1,1)^d) \subseteq X^\o, \quad \text{ for each } j, \quad \text { and } \quad
    \bigcup_j \ps_j([-1,1]^d) = X.
  \]
For instance, if $X$ is subanalytic or definable in $\R_{\cQ}$, where $\cQ$ is a suitable quasianalytic class, then it admits smooth rectilinearization;
cf.\ \cite{Hironaka73}, \cite{PawluckiPlesniak86},  \cite{BM97,BM04}, and \cite{RolinSpeisseggerWilkie03}.

\begin{lemma} \label{lem:smoothrect}
  Let $X \subseteq \R^d$ be a fat compact definable set admitting smooth rectilinearization.
  There is a finite number of definable $\cC^\infty$-maps $\vh_j : \R^d \times \R \to \R^d$ such that
  \begin{gather*}
    \vh_j(I^d \times (0,1]) \subseteq X^\o, \quad \text{ for each }j,  \quad \text { and } \quad
    \bigcup_j \vh_j(I^d \times \{0\}) = X,
  \end{gather*}
  where $I^d := [-1,1]^d$.
\end{lemma}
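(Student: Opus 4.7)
The plan is to produce each $\vh_j$ directly from the corresponding $\ps_j$ supplied by the smooth rectilinearization hypothesis, by precomposing with a very simple definable $\cC^\infty$ contraction of $I^d$ that is the identity at $t=0$ and strictly contracts $I^d$ into the open cube $(-1,1)^d$ for all $t\in(0,1]$. Once such a contraction is in place, the inclusions $\ps_j((-1,1)^d)\subseteq X^\o$ and $\bigcup_j \ps_j(I^d)=X$ hand over the two required properties essentially for free.

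Concretely, I would fix any definable $\cC^\infty$-function $\la:\R\to\R$ with $\la(0)=1$ and $\la(t)\in[1/2,1)$ for $t\in(0,1]$ (e.g.\ $\la(t)=1-t/2$, which is polynomial hence definable in $\sS$), and then set
\[
 \vh_j(x,t):=\ps_j(\la(t)\,x),\qquad (x,t)\in\R^d\times\R.
\]
This is definable and $\cC^\infty$ on all of $\R^d\times\R$, since $\ps_j$ is.

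The verification has two steps. First, at $t=0$ we have $\vh_j(x,0)=\ps_j(x)$ for every $x\in\R^d$, so
\[
 \bigcup_j \vh_j(I^d\times\{0\})=\bigcup_j \ps_j(I^d)=X
\]
by the rectilinearization hypothesis. Second, for $(x,t)\in I^d\times(0,1]$ the scalar $\la(t)$ lies in $[1/2,1)$, so $\la(t)x\in[-\la(t),\la(t)]^d\subseteq(-1,1)^d$; applying $\ps_j((-1,1)^d)\subseteq X^\o$ yields $\vh_j(x,t)\in X^\o$.

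There is no substantive obstacle here; the content of the lemma is entirely that inserting an extra smooth parameter $t$ via a contraction $\la(t)$ converts the covering by closed cubes into a family of maps whose images at $t=0$ still cover $X$ but which pushes instantly into the interior for $t>0$. The only point of care is that $\la$ (and hence $\vh_j$) must be definable, which is why I would take $\la$ to be polynomial rather than, say, a cut-off built from bump functions.
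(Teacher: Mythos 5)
Your proof is correct and is essentially the paper's own argument: the paper composes $\ps_j$ with $(x,t)\mapsto (1-t)x$, while you use the contraction $(1-t/2)x$, an immaterial difference. Both verifications (identity at $t=0$, strict contraction into $(-1,1)^d$ for $t\in(0,1]$, definability since the contraction is polynomial) are exactly as the paper intends.
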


\begin{proof}
  Compose $\ps_j$ with $(x_1,\ldots,x_d,t) \mapsto (x_1(1-t),\ldots,x_d(1-t))$.
\end{proof}

The following arguments are taken from the proof of \cite[Theorem 6.4]{PawluckiPlesniak86} and adapted to the definable setting.
For each $j$, the function
\[
	I^d \times [0,1] \ni (y,t) \mapsto \on{dist}(\vh_j(y,t), \R^d \setminus X)
\]
is definable.
By the {\L}ojasiewicz inequality \eqref{eq:Lojasiewicz},
there exist $L>0$ and $m \in \N_{\ge 1}$ such that
\begin{equation} \label{eq:Lojasiewicz2}
    	\on{dist}(\vh_j(y,t), \R^d \setminus X) \ge L t^{m}, \quad  (y,t) \in I^d\times [0,1].
\end{equation}
The constants $L$, $m$ may be assumed to be independent of $j$ by taking the minimum and maximum, respectively.
Write
\[
	\vh_j(y,t)  = T_j(y,t) + t^{m+1} Q_j(y,t), \quad (y,t) \in \R^d \times \R,
\]
where $T_j(y,\cdot)$ is the Taylor polynomial at $0$ of degree $m$ of $\vh_j(y,\cdot)$.
If we choose $\de \in (0,1]$ such that
$|tQ_j(y,t)| \le L/2$ for all $j$, $y \in I^d$, and $t \in [0,\de]$, then
\[
	\on{dist}(T_j(y,t), \R^d \setminus X) \ge L t^m - \frac{L}{2} t^m = \frac{L}{2} t^m, \quad (y,t) \in I^d \times [0,\de].
\]
Replacing $t$ by $\de t$, we obtain
\[
	\on{dist}(T_j(y,\de t), \R^d \setminus X) \ge  M t^m, \quad (y,t) \in I^d \times [0,1],
\]
where $M:= \frac{1}{2}L \de^m$. Clearly, $\bigcup_j T_j(I^d \times \{0\}) = \bigcup_j \vh_j(I^d \times \{0\}) = X$.
From this it is easy to conclude

\begin{proposition} \label{prop:UPC}
  Any fat compact definable set $X \subseteq \R^d$  admitting smooth rectilinearization
  is UPC.
\end{proposition}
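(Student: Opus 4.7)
The bulk of the work has been done in the paragraphs preceding the statement: via \Cref{lem:smoothrect} and the {\L}ojasiewicz inequality we already have, for each $j$, a degree-$m$ Taylor polynomial $T_j(y,t)$ in $t$ such that
\[
   \on{dist}\bigl(T_j(y,\de t), \R^d \setminus X\bigr) \ge M t^m, \quad (y,t) \in I^d \times [0,1],
\]
together with $\bigcup_j T_j(I^d \times \{0\}) = X$. The plan is simply to package this into the UPC definition by producing, for each $x \in X$, an explicit polynomial curve $h_x$ satisfying the two conditions (1) and (2).

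First I would fix $x \in X$ and choose some index $j = j(x)$ and a point $y_x \in I^d$ with $T_j(y_x, 0) = x$; such a pair exists because the finite union $\bigcup_j T_j(I^d \times \{0\})$ covers $X$. Then I would define
\[
   h_x(t) := T_{j(x)}(y_x, \de t), \qquad t \in \R.
\]
Since $T_j(y,\cdot)$ is a polynomial in $t$ of degree at most $m$ with coefficients depending on $y$, and $y_x$ is a fixed point of $I^d$, the curve $h_x$ is a polynomial curve $\R \to \R^d$ of degree at most $m$. The integer $N := m$ and the constants $M$ and $m$ from the displayed inequality are uniform in $x$, which is precisely what the UPC definition requires.

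Next I would verify the two defining properties. For (1): $h_x(0) = T_{j(x)}(y_x, 0) = x$, and for $t \in (0,1]$ the distance estimate above gives $\on{dist}(h_x(t), \R^d \setminus X) \ge M t^m > 0$, so $h_x(t)$ lies in the open set $X^\o$. For (2): the same estimate \emph{is} the required lower bound $\on{dist}(h_x(t), \R^d \setminus X) \ge M t^m$ for all $t \in [0,1]$.

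There is no real obstacle here; the only point requiring a moment of care is that $m$ was originally produced by {\L}ojasiewicz as a positive real, and one needs to note that it may be replaced by any larger positive integer (since $t^{m'} \le t^{m}$ for $t \in [0,1]$ and $m' \ge m$), which matches the convention stated in the introduction that $m$ be a positive integer. Once $h_x$ is exhibited, all the UPC bookkeeping is immediate.
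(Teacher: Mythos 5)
Your proposal is correct and follows exactly the route the paper takes: the paper carries out the \L ojasiewicz estimate for the Taylor polynomials $T_j(y,\de t)$ in the paragraphs before the proposition and then declares the conclusion ``easy,'' and your definition $h_x(t):=T_{j(x)}(y_x,\de t)$ is precisely the intended packaging, with the uniform constants $M$, $m$ and degree bound $N=m$ coming from the finiteness of the family $\{\ps_j\}$.
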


We recall that the reciprocal $\al = \frac{1}{m}$ of the integer $m$ that appears in \eqref{eq:Lojasiewicz2}
is called a UPC-index of $X$.

The following lemma will be an important tool in the proof of \Cref{main:B}.

\begin{lemma} \label{lem:bdder}
  Let $X \subseteq \R^d$ be a fat compact definable set admitting smooth rectilinearization and
  $\al$ a UPC-index of $X$.
  Let $n \in \N_{\ge 1}$ and $\om$ a modulus of continuity.
  For each $f \in \cA^{np(\al),\om}(X)$
  the Fr\'echet derivatives $f^{(k)}$, $k \le n$, are bounded on $X^\o$.
\end{lemma}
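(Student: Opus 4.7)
The strategy is to imitate the proof of \Cref{Krieglcusp} in the UPC setting, using the smooth rectilinearization together with the polynomial curves $T_j(y, \cdot)$ from the proof of \Cref{prop:UPC}. Since $\bigcup_j \vh_j(I^d \times \{0\}) = X$, every $x \in X^\o$ can be written $x = \vh_j(y, 0) = T_j(y, 0)$ for some $(j, y)$, so it suffices to bound $\|f^{(k)}(x)\|_{L^k(\R^d, \R)}$ uniformly in $y \in I^d$ for each fixed $j$ and each $k \le n$.

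Fix a smooth cutoff $\phi : \R \to [0, 1]$ with $\phi(\ta) = \ta^2$ for $|\ta| \le 1/2$, set $m := 1/\al$, and for $y \in I^d$ and $v \in \R^d$ with $|v| \le \eta$ (small) consider the smooth curve
\[
c_{y, v}(\ta) := T_j(y, \de\, \phi(\ta)) + \phi(\ta)^m\, v.
\]
The UPC distance estimate $\on{dist}(T_j(y, \de\, \phi(\ta)), \R^d \setminus X) \ge M\, \phi(\ta)^m$ guarantees $c_{y, v}(\ta) \in X$ for all $\ta \in \R$ once $\eta$ is small enough. Since $(y, v, \ta) \mapsto c_{y, v}(\ta)$ is jointly smooth, composing with any smooth curve into the parameter space yields a smooth curve in $X$; hence $(y, v, \ta) \mapsto f(c_{y, v}(\ta))$ is arc-$\cC^{np(\al), \om}$ on an open neighborhood of $I^d \times \ol{B(0, \eta)} \times \R$ and therefore of class $\cC^{np(\al), \om}$ there by Boman--Faure. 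In particular, all $\ta$-derivatives up to order $np(\al) = 2nm$ at $\ta = 0$ are uniformly bounded over the compact parameter set.

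Applying Fa\`a di Bruno at $\ta = 0$ and using that $\phi(\ta)^m v = \ta^{2m} v$ near the origin contributes $(2m)!\, v$ exactly to $(c_{y, v})^{(2m)}(0)$ and zero to all other derivatives of $c_{y, v}$ at the origin, I get for $x = T_j(y, 0) \in X^\o$ the identity
\[
f'(x)(v) = \frac{1}{(2m)!}\bigl[(f \o c_{y, v})^{(2m)}(0) - (f \o c_{y, 0})^{(2m)}(0)\bigr],
\]
which is uniformly bounded for $|v| \le \eta$, yielding $\|f'(x)\|_{L(\R^d, \R)} \le C/\eta$ uniformly on $X^\o$. Higher-order derivatives $f^{(k)}$ for $k \le n$ follow by induction on $k$, in the spirit of iterating \Cref{Krieglcusp}: each step costs $p(\al) = 2m$ derivative orders, consistent with the total budget $np(\al)$. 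The inductive step requires a Krieglcusp-type statement of the form ``$f \in \cA^{k, \om}(X) \Rightarrow f'$ is bounded on $X^\o$ and arc-$\cC^{k - p(\al), \om}$ there'', so that the construction applies to $f'$ in place of $f$; the base case $k = 0$ (boundedness of $f$ on $X^\o$) follows from arc-continuity of $f$ together with the Whitney regularity of the compact definable set $X$ (\Cref{sec:Wregular}). The main obstacle will be making this inductive step precise, in particular verifying that the formula above defining $f'$ does not depend on the rectilinearization representation $x = T_j(y, 0)$, which follows from the uniqueness of the classical derivative on the open set $X^\o$.
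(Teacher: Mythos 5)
Your overall strategy (rectilinearize, push a free direction $v$ into $X$ via the distance estimate \eqref{eq:Lojasiewicz2}, and read off directional derivatives of $f$ from derivatives of composites) is close in spirit to the paper's, and your Fa\`a di Bruno identity for $f'(x)(v)$ is correct. But the uniformity step has a genuine gap. The inclusion $c_{y,v}(\ta)\in X$ is only guaranteed for $y\in I^d$: the estimate \eqref{eq:Lojasiewicz2} comes from the {\L}ojasiewicz inequality applied on the compact set $I^d\times[0,1]$, and for $y$ outside $I^d$ the point $\vh_j(y,t)$, hence $T_j(y,\de t)$, need not lie in $X$ at all (the hypothesis $f^{-1}(0)\subseteq g^{-1}(0)$ of \eqref{eq:Lojasiewicz} breaks down on any larger cube). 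Consequently $(y,v,\ta)\mapsto f(c_{y,v}(\ta))$ is only arc-differentiable on the \emph{closed} parameter set $I^d\times\ol{B(0,\eta)}\times\R$, and Boman--Faure, being an open-set theorem, gives $\cC^{np(\al),\om}$ regularity only on the open cube with no bound up to its boundary --- which is precisely the kind of boundary statement this paper is devoted to proving, so you cannot assume it. Replacing Boman by \Cref{main:A} on the closed cube (a $1$-set) does not rescue the argument: it controls only $nm$ derivatives, while your formula needs the $\ta$-derivative of order $2m$, which exceeds $nm$ when $n=1$. The paper avoids all of this by arguing by contradiction along a sequence $x_\ell=\vh_{j_0}(y_\ell,0)$, threading one smooth curve $c$ through a fast-converging subsequence of the $y_\ell$, and applying \Cref{main:A} to $F(s,t_1,t_2)= f(\vh_{j_0}(c(s),t_1)+t_2v)$ on the \emph{cuspidal} parameter set $\{t_1\ge 0,\ |t_2|\le \tfrac{L}{2}t_1^{m}\}$, which is an $\al$-set; there the budget $np(\al)$ buys exactly $\cC^{n}$ up to $t_1=0$, and $d_v^kf(x_\ell)=\p_{t_2}^kF(1/\ell,0,0)$ is bounded for all $k\le n$ simultaneously.

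Your induction on $k$ is a second gap. \Cref{Krieglcusp} is proved only for $\al$-sets, whereas the sets here are general definable UPC sets (they may have horns), and the inductive hypothesis you would need --- that $f'$ is defined on all of $X$ and maps $\cC^\infty$-curves in $X$ to $\cC^{k-p(\al),\om}$-curves --- presupposes the continuous extension of $f'$ to $\p X$, i.e., essentially the conclusion of \Cref{main:B}. The paper's device of extracting $d_v^kf$ for all $k\le n$ at once from the $t_2$-derivatives of $F$, together with polarization, is what makes any induction unnecessary.
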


Note that the derivatives of $f \in \cA^{np(\al),\om}(X)$ exist up to order $np(\al)$ 
(by \cite[Theorem 2]{Boman67}, \cite[Th\'eor\`eme 1]{Faure89}), but
$f^{(n+1)}$ need not be globally bounded on $X^\o$ as seen in \Cref{rem:unbounded}.

\begin{proof}
  Let $k \le n$ be fixed.
  For contradiction, suppose that there is a sequence $(x_\ell)$ in $X^\o$ such that
  $\{f^{(k)}(x_\ell): \ell \in \N\}$ is unbounded.
  Let $\vh_j$ be the maps from \Cref{lem:smoothrect}.
  After passing to a subsequence, we may assume that $x_\ell \in \vh_{j_0}(I^d \times \{0\})$ for all $\ell$
  and some $j_0$.
  Choose $y_\ell \in I^d$ such that $\vh_{j_0}(y_\ell,0) = x_\ell$. Since $I^d$ is compact,
  after repeatedly passing to subsequences we may assume that $y_\ell$ converges to $y \in I^d$
  and that $y_\ell - y$ tends fast to $0$.
  The infinite polygon through the points $y_\ell$ and $y$ can be parameterized by a $\cC^\infty$-curve $c : \R \to I^d$
  such that $c(\frac{1}\ell) = y_\ell$ and $c(0) = y$ (cf.\ \cite[Lemma 2.8]{KM97}).
  Then $s \mapsto \vh_{j_0}(c(s),0)$ is a $\cC^\infty$-curve in $X$ through the points $x_\ell$ and $x = \vh_{j_0}(y,0)$.

  Let $v \in \mathbb S^{d-1}$ be arbitrary.
  By \Cref{main:A},
  \begin{equation*}
    (s,t_1,t_2) \to f\big(\vh_{j_0}(c(s),t_1) + t_2 v\big)
  \end{equation*}
  is of class $\cC^n$ for small $s \in \R$, $t_1 \ge 0$, and $|t_2| \le \frac{L}2 t_1^m$.
  Indeed, such $(s,t_1,t_2)$ range over an $\al$-set (where $\al = \frac{1}{m}$) and the point $\vh_{j_0}(c(s),t_1) + t_2$ lies in $X$, since,
  by \eqref{eq:Lojasiewicz2},
  \begin{align*}
    \on{dist}(\vh_{j_0}(c(s),t_1) + t_2 v, \R^d \setminus X) &\ge \on{dist}(\vh_{j_0}(c(s),t_1), \R^d \setminus X) - |t_2|
    \\
    &\ge L t_1^m - \frac{L}2 t_1^m = \frac{L}2 t_1^m.
  \end{align*}
  But this implies that the directional derivative $d_v^k f(x_\ell)$ is bounded in $\ell$.
  Since $v$ was arbitrary, $f^{(k)}(x_\ell)$
  is bounded (e.g.\ in view of the polarization formula \cite[(7.13.1)]{KM97}),
  a contradiction.
\end{proof}

As a by-product we obtain

\begin{proposition}
  Let $X \subseteq \R^d$ be a fat compact definable set admitting smooth rectilinearization.
  The $c^\infty$-topology of $X$ coincides with the trace topology of $\R^d$.
\end{proposition}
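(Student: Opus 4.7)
The plan is as follows. The $c^\infty$-topology always refines the trace topology, since every $\cC^\infty$-curve $c : \R \to \R^d$ with $c(\R) \subseteq X$ is continuous in the usual sense on $\R^d$. For the converse, suppose $U \subseteq X$ is $c^\infty$-open and $x \in U$; I aim to show that $U$ contains a trace-neighborhood of $x$. Arguing by contradiction, fix a sequence $x_\ell \to x$ in the trace topology with $x_\ell \notin U$ for every $\ell$. It suffices to construct a $\cC^\infty$-curve $c : \R \to X$ with $c(0) = x$ and $c(1/\ell) = x_\ell$ along some subsequence, because then $c^{-1}(U)$ would be an open subset of $\R$ containing $0$, forcing $1/\ell \in c^{-1}(U)$, i.e.\ $x_\ell \in U$, for all sufficiently large $\ell$ --- a contradiction.

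Such a curve is produced exactly by the scheme already used in the proof of \Cref{lem:bdder}. Apply \Cref{lem:smoothrect} and invoke pigeonhole on the finite family $\{\vh_j\}$ together with the compactness of $I^d$: after repeated passage to subsequences one obtains a single rectilinearization map $\vh_{j_0}$, points $y_\ell \in I^d$ with $\vh_{j_0}(y_\ell,0) = x_\ell$, and a limit $y \in I^d$ such that $y_\ell \to y$ fast in the sense of \cite[Lemma 2.8]{KM97}; continuity of $\vh_{j_0}$ then forces $x = \vh_{j_0}(y,0)$. The Special Curve Lemma supplies a $\cC^\infty$-curve $c_1 : \R \to \R^d$ with $c_1(0) = y$ and $c_1(1/\ell) = y_\ell$, and setting $c(s) := \vh_{j_0}(c_1(s),0)$ finishes the construction.

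The one point that needs to be checked is that the \emph{whole} image of $c_1$ lies in $I^d$, so that $c(\R) \subseteq \vh_{j_0}(I^d \times \{0\}) \subseteq X$. This is the main potential obstacle, but it is dispensed with using the convexity of $I^d$: the standard construction of the Special Curve Lemma realizes $c_1$ as a smooth interpolation staying inside the convex hull of $\{y\} \cup \{y_\ell : \ell \in \N\} \subseteq I^d$. This is the same device employed tacitly in \Cref{lem:bdder}, and everything else in the argument is routine.
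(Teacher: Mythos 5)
Your proposal is correct and follows essentially the same route as the paper's proof: reduce the question to threading a $\cC^\infty$-curve in $X$ through a subsequence of a convergent sequence and its limit, then obtain such a curve by pigeonholing over the finitely many rectilinearization maps, lifting to fast-converging preimages in $I^d$, and applying the special curve lemma, with the image staying in $I^d$ by convexity. The only cosmetic difference is that you phrase the argument with $c^\infty$-open sets and a contradiction, whereas the paper argues directly with $c^\infty$-closed sets.
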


\begin{proof}
  Let $A$ be a $c^\infty$-closed subset of $X$ and let $\ol A$ be the closure of $A$ in $\R^d$.
  We have to show that $\ol A \subseteq A$. Let $x \in \ol A$ and $x_\ell$ a sequence in $A$ with $x_\ell \to x$.
  If we find a $\cC^\infty$-curve in $X$ through a subsequence of $x_\ell$ and through $x$, we have $x \in A$ and are done.
  Since $X$ admits smooth rectilinearization, there exists a $\cC^\infty$-map $\ps : \R^d \to \R^d$
  and an infinite subsequence of $x_\ell$, again denoted by $x_\ell$, which is contained in
  $\ps([-1,1]^d)$.
  Choose $y_{\ell} \in [-1,1]^d$ such that $\ps(y_{\ell}) = x_{\ell}$.
  As in the proof of \Cref{lem:bdder} we may pass to a fast converging subsequence $y_\ell \to y \in [-1,1]^d$
  and find a $\cC^\infty$-curve $c$ in $[-1,1]^d$ which passes through this subsequence and $y$.
  Then $\ps \o c$ is a $\cC^\infty$-curve in $X$ through the corresponding $x_\ell = \ps(y_{\ell})$ and $x= \ps(y)$.
\end{proof}

\subsection{Proof of \Cref{main:B}}

  Let $X \subseteq \R^d$ be a simple fat compact definable set
  admitting smooth rectilinearization
  and let $\al$ be a UPC-index for $X$.
  Let $n$ be a positive integer and $\om$ a modulus of continuity.
  Let $f \in \cA^{np(\al),\om}(X)$.
  Then the Fr\'echet derivatives $f^{(k)}$ exist and are continuous on $X^\o$ for all $k \le np(\al)$, 
  by \cite[Theorem 2]{Boman67}, \cite[Th\'eor\`eme 1]{Faure89},
  and they are globally bounded on $X^\o$ for all $k \le n$, by \Cref{lem:bdder}.
  It remains to show that
  for all $k \le n-1$ they extend continuously to $\p X$.

  Fix $x \in \p X$.
  By \Cref{prop:UPC}, there exist $m,N \in \N_{\ge 1}$ with $\al = \frac{1}{m}$, $M >0$, and a polynomial curve $h_x : \R \to \R^d$
  of degree at most $N$ such that
  \begin{enumerate}
      \item $h_x(0)=x$,
      \item $\on{dist}(h_x(t),\R^d \setminus X) \ge Mt^m$ for all $t \in (0,1]$.
  \end{enumerate}
  Then $h_x(t) - x$ vanishes to finite order.
  So there is a positive integer $j=j(x)$ such that $h_x(t) - x = t^j \tilde h_x(t)$, where $\tilde h_x(0) \ne 0$.
  Set $v_1 := \frac{\tilde h_x(0)}{|\tilde h_x(0)|} \in \mathbb S^{d-1}$.
  Choose $d-1$ directions $v_2,\ldots, v_d \in \mathbb S^{d-1}$ such that $v_1,v_2,\ldots, v_d$ are linearly independent and
  consider the map $\Ps_{x,v} : \R^d \to \R^d$ defined by
  \begin{equation} \label{eq:Ps}
    \Ps_{x,v}(t_1,t_2,\ldots,t_d) := h_x(t_1) + t_2 v_2 + \cdots + t_d v_d.
  \end{equation}
  The restriction of $\Ps_{x,v}$ to
  \begin{equation} \label{eq:Y}
    Y :=\big\{(t_1,\ldots,t_d) \in \R^d : t_1 \in (0,\de),\, |t_j| < \tfrac{M}{2(d-1)} t_1^m \text{ for } 2\le j \le d\big\},
  \end{equation}
  for small $\de>0$,
  is a diffeomorphism onto the open subset $H_{x,v} := \Ps_{x,v}(Y)$
  of $X^\o$ and it extends to a homeomorphism between $Y\cup\{0\}$ and $H_{x,v} \cup \{x\}$.
  Indeed,
  \begin{align*}
    \on{dist}(\Ps_{x,v}(t),\R^d \setminus X) &\ge \on{dist}(h_x(t_1),\R^d \setminus X) - |t_2| - \cdots - |t_d|
    \\
    & > M t_1^m - \frac{M}{2} t_1^m = \frac{M}{2} t_1^m >0,
  \end{align*}
  for $t \in Y$.
  This also shows that $\Ps_{x,v}(\ol Y)\subseteq X$.
  Since $f$ is of class $\cC^{n}$ in $X^\o$, we have
  \begin{equation} \label{eq:defcontder}
    \p_{t_2}^{j_2} \cdots \p_{t_d}^{j_d} (f \o \Ps_{x,v})(t) =  d_{v_2}^{j_2} \cdots d_{v_d}^{j_d} f(\Ps_{x,v}(t)),
   \end{equation}
  for all $t \in Y$ and $0 \le j_2 + \cdots + j_d \le n$; actually even up to order $n p(\al)$ but we will not need this.
  The function $f \o \Ps_{x,v}|_{\ol Y}$ belongs to $\cA^{np(\al),\om}(\ol Y)$ and hence to $\cC^{n}(\ol Y)$,
  by \Cref{main:A}, as $\ol Y$ is an $\al$-set.
  It follows that
  the left-hand side of \eqref{eq:defcontder} extends continuously to $t=0$
  for all $0 \le j_2 + \cdots + j_d \le n$.
  So the directional derivatives
  $d_{v_2}^{j_2} \cdots d_{v_d}^{j_d} f$, $0 \le j_2 + \cdots + j_d \le n$, extend continuously from $H_{x,v}$ to $x$.

  If we perturb the directions $v_2,\ldots, v_d$ a little such that $v_1,v_2,\ldots,v_d$ remain
  linearly independent and take the intersection $H_x$ of the corresponding sets $H_{x,v}$,
  then $H_x$ still is an open subset of $X^\o$ with $h_x(t) \in H_x$ for small $t>0$ and $x \in \ol H_x$.
  So the directional derivatives $d_{w_2}^{j_2} \cdots d_{w_d}^{j_d} f$, $0 \le j_2 + \cdots + j_d \le n$, extend continuously from $H_{x}$ to $x$
  for all $w_2,\ldots,w_d$ near $v_2,\ldots, v_d$.
  Thus the Fr\'echet derivatives $f^{(k)}$, for $k \le n$, extend continuously from
  $H_x$ to $x$ (in view of the polarization formula \cite[(7.13.1)]{KM97}).

  For each $x \in \p X$
  we define
  \[
    f^{(k)}(x) := \lim_{H_x \ni y \to x} f^{(k)}(y), \quad k \le n.
  \]
  It remains to prove that the so defined extension of $f^{(k)}$ to $\p X$ is continuous at $\p X$ if $k \le n-1$.
  To this end fix $x \in \p X$ and let
  $(x_j)$ and $(y_j)$ be two sequences in
  $X^\o$ converging to $x$.
  By \Cref{lem:quasiconvex}, for each $\ep>0$ there exists $j_0 \in \N$ such that for all $j \ge j_0$
  the points $x_j$ and $y_j$ can be joined by a rectifiable path $\ga_j$ in $X^\o$ of length $\ell(\ga_j) \le \ep$.
  Thus
  \[
  \|f^{(k)}(x_j) - f^{(k)}(y_j)\|_{L^k(\R^d,\R)} \le \left(\sup_{z \in \ga_j} \|f^{(k+1)}(z)\|_{L^{k+1}(\R^d,\R)}\right) \, \ell(\ga_j)
  \]
  tends to $0$ as $j \to \infty$ if $k \le n-1$, since $f^{(k+1)}$ is globally bounded on $X^\o$, by \Cref{lem:bdder}.
  If we assume that the sequence $(x_j)$ lies in $H_x$,
  we obtain
  \[
    f^{(k)}(x)
    = \lim_{X^\o \ni y \to x} f^{(k)}(y), \quad k \le n-1.
  \]
  Finally, suppose that $\p X \ni x_j \to x$.
  Choose $y_j \in H_{x_j} \cap B(x_j,j^{-1})$.
  Then
  \begin{multline*}
    \|f^{(k)}(x) - f^{(k)}(x_j)\|_{L^k(\R^d,\R)}
    \\
    \le \|f^{(k)}(x) - f^{(k)}(y_j)\|_{L^k(\R^d,\R)} +
     \|f^{(k)}(x_j) - f^{(k)}(y_j)\|_{L^k(\R^d,\R)}
  \end{multline*}
  tends to $0$ as $j \to \infty$.
  Thus $f^{(k)}$ extends continuously to $x$.

  Now suppose that $Z$ is any non-empty subset of $X$ and $f \in \cA^{np(\al),\om}_Z(X)$. Fix $x \in Z$.
  We want to show that $f^{(k)}(x) = 0$ for all $k \le n-1$; actually, it is true even for $k = n$.
  The assertion is easy to see if $x \in X^\o$. So let us assume that $x \in \p X$ and let $\Ps_{x,v}$ be the map from \eqref{eq:Ps} and $Y$ the set from \eqref{eq:Y}.
  Then $f \o \Ps_{x,v}|_{\ol Y}$ belongs to $\cA^{np(\al),\om}_{\{0\}}(\ol Y)$ and thus to $\cC^n_{\{0\}}(\ol Y)$, by \Cref{main:A}.
  In view of \eqref{eq:defcontder} and the perturbation argument shortly after \eqref{eq:defcontder},
  we may conclude that
  $f^{(k)}(x) =0$ for $k \le n$.
  \Cref{main:B} is proved.

\subsection{Proof of \Cref{main:C}}
  This follows in analogy to the proof of \Cref{main:Aa} (see \Cref{sec:proofmain:Aa}) from \Cref{main:B},
  where we use Whitney regularity \ref{sec:Wregular} instead of \Cref{prop:alpharegular}.

\subsection{Weak flatness on UPC sets}

Let $X\subseteq \R^d$ be a fat closed set and $x \in X$. Let $r\ge 0$.
A function $f: X \to \R$ is called \emph{weakly $r$-flat at $x$ in $X$} if
\[
  \frac{|f(y)|}{|y-x|^r} \to 0 \quad \text{ as } X \ni y \to x.
\]
It is called \emph{weakly $\infty$-flat at $x$ in $X$} if it is
weakly $n$-flat at $x$ for each $n \in \N$.

\begin{corollary} \label{cor:weakflat}
  Let $X \subseteq \R^d$ be a closed UPC set (not necessarily simple or definable) and let $\al$ be a
  UPC-index of $X$.  Let $x \in X$. Let $n \in \N$ and $\om$ a modulus of continuity.
  Assume that $f \in \cA^{n p(\al),\om}(X) \cap \cC^{n}(X)$ is weakly $\frac{np(\al)}{2}$-flat at $x$ in $X$.
  Then $f$ is $n$-flat at $x$.
\end{corollary}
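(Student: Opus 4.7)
My plan is to reduce the claim to one about an $\al$-cusp and then invoke the flatness supplement of \Cref{main:A}. First, using the UPC property at $x$ with exponent $m = 1/\al$, I would choose a polynomial curve $h_x$ as in \Cref{sec:UPC}, write $h_x(t) - x = t^j \tilde h_x(t)$, let $v_1 := \tilde h_x(0)/|\tilde h_x(0)|$, and pick $v_2,\ldots,v_d \in \mathbb S^{d-1}$ so that $v_1,\ldots,v_d$ are linearly independent. Then I would construct the map $\Ps_{x,v}(u) := h_x(u_1) + u_2 v_2 + \cdots + u_d v_d$ and the cusp $Y := \{u \in \R^d : 0 < u_1 < \de,\ |u_j| < \tfrac{M}{2(d-1)} u_1^m \text{ for } j \ge 2\}$ exactly as in the proof of \Cref{main:B}, so that $\Ps_{x,v}(\ol Y) \subseteq X$, $\Ps_{x,v}(0) = x$, and $\ol Y$ is an $\al$-set. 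Setting $g := f \circ \Ps_{x,v}|_{\ol Y}$, the arc-regularity $g \in \cA^{np(\al),\om}(\ol Y)$ is immediate, and the inequality $|\Ps_{x,v}(u) - x| \le C|u|$ transfers the weak $\tfrac{np(\al)}{2}$-flatness of $f$ at $x$ to weak $\tfrac{np(\al)}{2}$-flatness of $g$ at $0 \in \ol Y$.

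The key step, and the main obstacle, is the geometric observation that every smooth curve $c : \R \to \ol Y$ with $c(0) = 0$ vanishes to order at least $2$ at $0$. Indeed, $\ol Y \cap \{u_1 = 0\} = \{0\}$, so either $c \equiv 0$ near $0$, or else $c_1 \ge 0$ smooth with $c_1(0) = 0$ forces $c_1$ to vanish to some even order $r \ge 2$, and the cusp constraint $|c_j| \le \tfrac{M}{2(d-1)} c_1^m$ then forces the remaining components to vanish to order at least $rm$. Consequently $|c(t)| = O(t^2)$, so the weak flatness of $g$ gives $|g(c(t))| = o(t^{np(\al)})$. Since $g \circ c \in \cC^{np(\al),\om}$, Peano's form of Taylor's theorem then forces $(g \circ c)^{(k)}(0) = 0$ for every $k \le np(\al)$; that is, $g \in \cA^{np(\al),\om}_{\{0\}}(\ol Y)$.

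Applying \Cref{main:A}(2) to the $\al$-set $\ol Y$ with subset $\{0\}$ will yield $g \in \cC^{n}_{\{0\}}(\ol Y)$, hence $g^{(k)}(0) = 0$ for all $k \le n$. Since $\partial_{u_2}^k g(u) = f^{(k)}(\Ps_{x,v}(u))(v_2^k)$ on the interior $Y$ by the chain rule, passing to the limit $u \to 0$ using the continuity of $f^{(k)}$ on $X$ gives $f^{(k)}(x)(v_2^k) = 0$. Varying $v_2$ over the open dense set $\R^d \setminus \on{span}(v_1)$ makes the polynomial $w \mapsto f^{(k)}(x)(w^k)$ vanish on an open subset of $\R^d$, hence identically, and polarization then delivers $f^{(k)}(x) = 0$ for $k \le n$, which is the desired $n$-flatness.
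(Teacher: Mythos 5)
Your proof is correct and follows essentially the same route as the paper's: reduce to the $\al$-cusp $\ol Y$ via $\Ps_{x,v}$, note that every smooth curve $c$ in $\ol Y$ with $c(0)=0$ vanishes to order at least $2$ so that weak $\tfrac{np(\al)}{2}$-flatness of $g$ upgrades to $(g\o c)^{(j)}(0)=0$ for $j\le np(\al)$, and conclude with \Cref{main:A}(2) and a polarization/density argument (which you spell out a bit more explicitly than the paper, which just refers back to the perturbation step in the proof of \Cref{main:B}). One cosmetic imprecision: $c_1$ need not vanish to a finite even order (it may be infinitely flat without $c$ being identically zero), but since $c_1\ge 0$ and $c_1(0)=0$ force $c_1'(0)=0$, the bound $|c(t)|=O(t^2)$ that you actually use still holds.
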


\begin{proof}
   Since $X$ is UPC  and $\al$ is a UPC-index of $X$, we find as in the proof of \Cref{main:B}
   an $\al$-set $\ol Y$ and a map $\Ps_{x,v} : \R^d \to \R^d$ such that
   $\Ps_{x,v}(\ol Y) \subseteq X$, $\Ps_{x,v}(0) = x$, and
   $g:= f \o \Ps_{x,v}|_{\ol Y}$ belongs to $\cA^{n p(\al),\om}(\ol Y)$.
   In view of \Cref{main:A}, \eqref{eq:defcontder},  and the perturbation argument shortly after \eqref{eq:defcontder},
   it suffices to show that $g$ actually belongs to $\cA^{n p(\al),\om}_{\{0\}}(\ol Y)$.
   So we fix $c \in \cC^\infty(\R,\ol Y)$ with $c(0)=0$ and check that $(g \o c)^{(j)}(0)=0$ for all $j \le np(\al)$.
   The shape of $\ol Y$ imposes that $c$ vanishes to order at least $2$ at $0$, i.e., $c(t) = t^2 \tilde c(t)$.
   Since $f$ is weakly $r:=\frac{np(\al)}{2}$-flat at $x$ in $X$,
   also $g$ is weakly $r$-flat at $0$ in $\ol Y$:
   \[
      \frac{|g(y)|}{|y|^r} = \frac{|f(\Ps_{x,v}(y))|}{|\Ps_{x,v}(y) -x|^r} \frac{|\Ps_{x,v}(y) -x|^r}{|y|^r}\to 0 \quad \text{ as } \ol Y \ni y \to 0,
   \]
   since $\Ps_{x,v}$ is locally Lipschitz. Consequently,
   \[
     \frac{|g(c(t))|}{|c(t)|^{r}} = \frac{|g(c(t))|}{|t|^{2r}|\tilde c(t)|^{r}}  \to 0 \quad \text{ as } t \to 0,
   \]
   and we see that $g \o c$ is weakly $np(\al)$-flat at $0$ in $\R$.
   It follows that $(g \o c)^{(j)}(0)=0$ for all $j \le np(\al)$ and we are done.
\end{proof}

A similar result has been obtained in \cite[Satz 1.4]{Spallek:1977wp}:
A $\cC^n$-function $f$ defined on a neighborhood of $x$ in $\R^d$ is $n$-flat at $x$,
if $f$ is weakly $n p$-flat at $x$ in $X$ and $X\subseteq \R^d$ contains a sequence of balls $B(x_k,r_k)$
such that $|x_k - x|^p/r_k \to 0$.
See also \cite{Izumi:2002wq}.
In the language of \cite{Izumi:2002wq}, the corollary implies that the $\cC^\infty$-Spallek function $S^\infty_{X,x}$
of a closed UPC set $X$ with UPC-index $\al$ and $x \in X$ satisfies $S^\infty_{X,x}(n) \le n p(\al)$.

\subsection{Open problem}

Our proof of \Cref{main:B} uses that the set $X$ admits smooth rectilinearization and (as a consequence)
is UPC. We do now know if these assumptions can be relaxed.
There are polynomially bounded o-minimal expansions of the real field in which sets that lack these
properties are definable. For instance, the set $X$ in \Cref{ex:irrational} below is definable in the structure
$\R_{\text{an}}^\R$, but it is not UPC (cf.\ \Cref{sec:UPC}).
Any $\cC^\infty$-curve $c$ in $X$ through the boundary point $0$ has to vanish to infinite order on $c^{-1}(0)$.
It could be an indication that $\cC^\infty$-curves may no be enough to detect derivatives at some boundary points
even though the set has finite cuspidality.
Be that as it may,
the $\cC^\infty$-curves certainly do not discriminate points of flatness:
The function $f(x,y) = x$ belongs to $\cA^\infty_{\{0\}}(X)$ but $\p_x f(0) =1$.
Furthermore, the smooth function $g(x,y) =e^{-1/x^2}$ if $x \ne 0$ and $g(0,y)=0$
is not analytic near the origin,
but, trivially, $g \o c$ is analytic for all analytic curves $c$ in $X$; so the Bochnak--Siciak theorem fails on $X$,
while it holds on simple fat closed subanalytic sets and H\"older sets; cf.\
\cite[Theorem 1.16 and Corollary 1.17]{Rainer18}.

\begin{example} \label{ex:irrational}
  Let $\si>1$ be an irrational number and $p> \si$ an integer.
  Consider
  \[
    X := \{(x,y) \in \R^2 : 0 \le x \le 1, \, x^\si \le y \le x^{\si} + x^p\}.
  \]
  Each $\cC^\infty$-curve $c : \R \to X$ must be infinitely flat on $c^{-1}(0)$.
  Suppose for contradiction that
  $c(0) = 0$ and
  $c(t) = (x(t),y(t))$ vanishes only to finite order at $t=0$.
  There exist positive integers $k,\ell$ such that
  $x(t) = t^k \tilde x(t)$ and $y(t) = t^\ell \tilde y(t)$, where $\tilde x, \tilde y$ are smooth
  and either $\tilde x(0)\ne 0$ or $\tilde y(0) \ne 0$. 
  Then
  \[
    0 \le t^{\si k} \tilde x(t)^\si \le t^\ell \tilde y(t) \le t^{\si k} \tilde x(t)^\si + t^{p k} \tilde x(t)^p, \quad t \ge 0.
  \]
  Several cases must be discussed:
  \begin{enumerate}
    \item $\tilde x(0) \ne 0$ and $\si k -\ell <0$ is impossible, since it would mean that $\tilde y(t)$ is unbounded at $0$.
    \item $\tilde x(0) \ne 0$ and $\si k -\ell >0$ implies that $\tilde y(0)=0$. So we may replace $\ell$ by $\ell +1$ and repeat the reasoning.
    We either end up in case (1) or $y(t)$ vanishes to infinite order at $t=0$. But the latter contradicts $\tilde x(0) \ne 0$. So case (2) is impossible.
    \item $\tilde y(0) \ne 0$ and $\si k -\ell >0$ is impossible.
    \item $\tilde y(0) \ne 0$ and $\si k -\ell <0$ implies that $\tilde x(0)=0$. As in case (2), we may replace $k$ by $k+1$ and repeat the argument.
    This leads to case (3) or $x(t)$ vanishes to infinite order. But then also $y(t)$ vanishes to infinite order at $t=0$, a contradiction.
  \end{enumerate}
  It seems to be unknown if $X$ satisfies the Markov inequality \eqref{MI} (cf.\ \cite[p. 649]{Pierzchal-a:2012uv}), but
  it has the Whitney extension property WEP (e.g., by \cite[Theorem 3.15]{Frerick:2007aa}) and hence admits a weaker
  inequality of Markov type, by \cite[Theorem 4.6]{Frerick:2007aa}: 
  for all $\th \in (0,1)$ there exist $C,r\ge 1$ such that 
  \begin{equation} \label{eq:wMI}
    \sup_{x \in X} |\nabla p(x)| \le C (\deg p)^r  \sup_{x \in X} |p(x)|^\th  \sup_{x \in K} |p(x)|^{1-\th} 
  \end{equation}
  for all real polynomials $p$,
  where $K \subseteq \R^2$ is any compact set with $X \subseteq K^\o$. 
\end{example}

\subsection*{Acknowledgement}
I am grateful to the anonymous referee for valuable remarks that led to an improvement of the presentation.

\def\cprime{$'$}
\providecommand{\bysame}{\leavevmode\hbox to3em{\hrulefill}\thinspace}
\providecommand{\MR}{\relax\ifhmode\unskip\space\fi MR }
\providecommand{\MRhref}[2]{%
  \href{http://www.ams.org/mathscinet-getitem?mr=#1}{#2}
}
\providecommand{\href}[2]{#2}

\end{document}